\documentclass[11pt, a4paper]{amsart}

\usepackage{amssymb,amsmath,amsthm}
\usepackage{tikz-cd}
\usetikzlibrary{arrows}
\usepackage{fullpage}
\usepackage{graphicx}
\usepackage[hidelinks]{hyperref}
\usepackage{mathtools}
\usepackage[titletoc,title]{appendix}
\usepackage{graphicx}
\usepackage{caption}
\usepackage{subcaption}
\usepackage{blindtext}
\usepackage[titletoc,title]{appendix}
\usepackage[english]{babel}
\usepackage{mathrsfs}
\usepackage{cite}
\usepackage{mathabx}
\usepackage{xcolor}

\newtheorem{theorem}{Theorem}[section]
\newtheorem{lemma}[theorem]{Lemma}
\newtheorem{corollary}[theorem]{Corollary}
\newtheorem{proposition}[theorem]{Proposition}
\theoremstyle{definition}
\newtheorem{definition}[theorem]{Definition}
\newtheorem{example}[theorem]{Example}
\newtheorem{problem}[theorem]{Problem}
\newtheorem{remark}[theorem]{Remark}

\newtheorem{question}[theorem]{Question}

\newtheorem*{theorem*}{Theorem}
\newtheorem*{conjecture*}{Conjecture}

\newcommand{\isom}{\cong}

\newcommand{\N}{\mathbb{N}}
\newcommand{\Z}{\mathbb{Z}}

\newcommand{\R}{\mathbb{R}}
\makeatletter
\def\Ddots{\mathinner{\mkern1mu\raise\p@
\vbox{\kern7\p@\hbox{.}}\mkern2mu
\raise4\p@\hbox{.}\mkern2mu\raise7\p@\hbox{.}\mkern1mu}}

\makeatother

\DeclareMathOperator{\Ima}{Im}

\newcommand{\normal}[1]{\langle\langle #1 \rangle\rangle}

\def\immerses{\looparrowright}
\def\injects{\hookrightarrow}

\DeclareSymbolFontAlphabet{\amsmathbb}{AMSb}

\DeclareMathOperator{\rk}{rk}
\DeclareMathOperator{\rr}{rr}
\DeclareMathOperator{\pr}{pr}

\DeclareMathOperator{\core}{Core}

\DeclareMathOperator{\aut}{Aut}

\DeclareMathOperator{\stab}{Stab}

\DeclareMathOperator{\deck}{Deck}
\DeclareMathOperator{\zd}{\mathbb{Z}D}
\DeclareMathOperator{\ab}{ab}
\DeclareMathOperator{\bs}{BS}

\DeclarePairedDelimiter\abs{\lvert}{\rvert}

\makeatletter
\let\oldabs\abs
\def\abs{\@ifstar{\oldabs}{\oldabs*}}

\newcounter{cases}
\newcounter{subcases}[cases]
\newenvironment{mycase}
{
    \setcounter{cases}{0}
    \setcounter{subcases}{0}
    \newcommand{\case}
    {
        \par\indent\stepcounter{cases}\textbf{Case \thecases.}
    }
    
}
{
    \par
}
\renewcommand*\thecases{\arabic{cases}}


%
\newcommand{\fakeenv}{} 

\newenvironment{restate}[2]  
{ 
 \renewcommand{\fakeenv}{#2} 
 \theoremstyle{plain} 
 \newtheorem*{\fakeenv}{#1~\ref{#2}} 
 \begin{\fakeenv}
}
{
 \end{\fakeenv}
}

\begin{document}

\title{One-relator hierarchies}
\author{Marco Linton}


\address{University of Oxford, Oxford, OX2 6GG, UK}

\email{marco.linton@maths.ox.ac.uk}

\begin{abstract}
We prove that one-relator groups with negative immersions are hyperbolic and virtually special; this resolves a recent conjecture of Louder and Wilton. As a consequence, one-relator groups with negative immersions are residually finite, linear and have isomorphism problem decidable among one-relator groups. Using the fact that parafree one-relator groups have negative immersions, we answer a question of Baumslag's from 1986. The main new tool we develop is a refinement of the classic Magnus--Moldavanskii hierarchy for one-relator groups. We introduce the notions of $\Z$-stable HNN-extensions and $\Z$-stable hierarchies. We then show that a one-relator group is hyperbolic and has a quasi-convex one-relator hierarchy if and only if it does not contain a Baumslag--Solitar subgroup and has a $\Z$-stable one-relator hierarchy.
\end{abstract}

\maketitle

\section{Introduction}

One-relator groups, despite their simple definition, have been stubbornly resistant to geometric characterisations. A well known conjecture attributed to Gersten states that a one-relator group is hyperbolic if and only if it does not contain Baumslag--Solitar subgroups \cite{gersten_92,allcock_99}. Modifying the hypothesis or conclusion of this conjecture appears to be problematic. For example, conjectures attempting to classify one-relator groups that are automatic \cite{gersten_92,myasnikov_11} or act freely on a CAT(0) cube complex \cite{wise_14} have been posed, but both have been disproven \cite{gardam_18}. Conjectures attempting to classify when (subgroups of) groups of finite type are hyperbolic have also been posed \cite{brady_99,bestvina_04}, but have also been disproven \cite{italiano_21}. More variations can be found, for example, in \cite{gromov_93,wise_05_cat,gardam_21}.

Recently, a different type of geometric characterisation for one-relator groups has emerged from work of Helfer and Wise \cite{helfer_16} and, independently, Louder and Wilton \cite{louder_17}. Namely, if $X$ is the presentation complex of a one-relator group, then:
\begin{enumerate}
\item $X$ has not-too-positive immersions \cite{louder_17}.
\item $X$ has non-positive immersions if and only if $\pi_1(X)$ is torsion-free \cite{helfer_16}.
\item $X$ has negative immersions if and only if every two-generator subgroup of $\pi_1(X)$ is free \cite{louder_21}.
\end{enumerate}
Moreover, each of these properties is decidable from $X$ \cite{karrass_60,louder_21}. Since one-relator groups with negative immersions cannot contain Baumslag--Solitar subgroups, this led Louder and Wilton to make the following conjecture \cite[Conjecture 1.9]{louder_21}.

\begin{conjecture*}
\label{hyperbolic_conjecture}
Every one-relator group with negative immersions is hyperbolic.
\end{conjecture*}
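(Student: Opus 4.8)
The plan is to run the Magnus--Moldavanskii hierarchy and promote it, one floor at a time, to a hierarchy of hyperbolic groups. First observe that negative immersions implies non-positive immersions, so by Helfer--Wise the group $G = \langle a_1, \dots, a_k \mid w \rangle$ is torsion-free; equivalently $w$ is not a proper power, and (after a harmless enlargement of the generating set, following Moldavanskii) some generator $t$ has exponent sum zero in $w$. Then $G$ splits as an HNN-extension $G = \langle G', t \mid t M_- t^{-1} = M_+ \rangle$, where $G'$ is again a one-relator group — on more generators, but with a relator of strictly smaller complexity — and the edge groups $M_\pm \le G'$ are \emph{Magnus subgroups}, generated by proper subsets of the generating set, hence free by the Freiheitssatz. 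Iterating yields a \emph{one-relator hierarchy} $G = G_n \succ G_{n-1} \succ \dots \succ G_0$; since negative immersions forbids non-free two-generator one-relator subgroups, none of the exceptional small vertex groups ($\mathbb{Z}^2$, the Klein bottle group) can occur as some $G_i \le G$, so the hierarchy terminates with $G_0$ free. As free groups are hyperbolic, it suffices to prove inductively that each floor $G_{i+1} = \langle G_i, t \mid t M_- t^{-1} = M_+ \rangle$ is hyperbolic with $M_\pm$ quasi-convex in it, given the same for $G_i$. The natural engine is the Bestvina--Feighn combination theorem, whose hypotheses for such a floor are: $G_i$ hyperbolic, $M_\pm$ quasi-convex in $G_i$, and the ``annuli flaring'' (hallways flare) condition for the HNN-extension.

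The substance of the argument is to isolate a property of an HNN-floor — call it \emph{stability}, a hierarchy of stable floors being a \emph{stable hierarchy} — with the following two features: (i) stability, together with the absence of Baumslag--Solitar subgroups, implies the flaring hypothesis of Bestvina--Feighn; and (ii) stability is inherited all the way down a Magnus--Moldavanskii hierarchy whose top group has negative immersions. For (i), a failure of flaring along the edge identification $M_- \xrightarrow{\sim} M_+$ heuristically produces a long thin annulus in the Bass--Serre complex, and filling it ought to contradict stability unless it exhibits a Baumslag--Solitar subgroup — so ``stable and Baumslag--Solitar-free'' should be precisely what the combination theorem needs. For (ii), the input is the Louder--Wilton analysis of negative immersions: it is equivalent to every two-generator subgroup being free, which they control via a primitivity-rank criterion on $w$ and an attendant uniform isoperimetric-type estimate (through towers and the combinatorics of $w$-subgroups); one must show this estimate descends the hierarchy in a form strong enough to force stability of the next floor. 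Granting (i) and (ii): a one-relator group with negative immersions is Baumslag--Solitar-free (Louder--Wilton) and has a stable one-relator hierarchy, hence, applying (i) inductively, is hyperbolic with a quasi-convex one-relator hierarchy.

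I expect (ii) to be the main obstacle. The Magnus reduction changes both the generating set and the relator, so negative immersions need not literally persist at the intermediate groups $G_i$; what is needed instead is a robust quantitative surrogate attached to the pair $(G_i, M_\pm)$ — some flaring or acylindricity parameter — that can be extracted from the negative-immersions hypothesis at the top and shown to stay uniformly away from the degenerate regime when one passes to a floor below. Making this bookkeeping precise, and especially handling the exceptional intersection patterns of Magnus subgroups (where $M_-$ and $M_+$ are conjugate or deeply commensurate, the configurations that in other one-relator families manufacture Baumslag--Solitar subgroups), is where the real work lies; a secondary difficulty is verifying the flaring condition itself from the stability data, since the Bass--Serre complex of a one-relator hierarchy is not locally finite.

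Once hyperbolicity and a quasi-convex one-relator hierarchy are established, the stronger assertions of the abstract follow essentially for free: a hyperbolic group carrying a quasi-convex hierarchy that terminates in free groups is virtually compact special by Wise's hierarchy theorem together with Agol's theorem, and virtual specialness of parafree one-relator groups then feeds the solvability of their isomorphism problem.
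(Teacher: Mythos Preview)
Your plan is the paper's strategy almost exactly: build a one-relator hierarchy, isolate a \emph{stability} condition on each HNN-floor, prove (i) stable $+$ BS-free $\Rightarrow$ acylindrical $\Rightarrow$ hyperbolic via a combination theorem, and (ii) negative immersions $\Rightarrow$ every floor is stable. What you have is an accurate outline; what is missing is the definition of stability and the actual proofs of (i) and (ii), and your guess about where (ii) is hard points in the wrong direction.

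The paper's stability is concrete: for $H*_\psi$ with $\psi:A\to B$ between Magnus subgroups, set $\mathcal{A}_1^\psi=\{[B]\}$ and inductively $\mathcal{A}_{i+1}^\psi=\{[\psi(A\cap A_i)]:[A_i]\in\mathcal{A}_i^\psi\}$; the floor is stable if the non-cyclic classes eventually vanish. For (i), once all long segment-stabilisers in the Bass--Serre tree are cyclic, the residual obstruction to acylindricity is encoded in a finite \emph{graph of cyclic stabilisers} $\mathcal{G}$, and one shows $\mathcal{G}$ carries a cyclically reduced alternating loop iff $H*_\psi$ contains a Baumslag--Solitar subgroup. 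So stable $+$ BS-free $\Rightarrow$ acylindrical, and Kapovich's combination theorem (not Bestvina--Feighn directly) yields hyperbolicity and quasi-convexity of the vertex and edge groups.

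Your worry about (ii) --- that negative immersions need not persist at intermediate $G_i$, so one must track a ``quantitative surrogate'' down the hierarchy --- is not what happens, and attempting that route would indeed be painful. The paper sidesteps it: Louder--Wilton's uniform-negative-immersions theorem gives a \emph{global} property of $G=\pi_1(X)$, namely that every strictly descending chain of non-cyclic freely indecomposable subgroups of bounded rank is finite. This passes to every subgroup $G_i<G$ automatically. One then shows (Theorem~\ref{descending_stable}) that if some $\psi_i$ were unstable, there would be a hyperbolic element $g$ on the Bass--Serre tree with $H\cap H^{g^n}$ non-cyclic for all $n$, and the chain $\langle H,g\rangle>\langle H,g^2\rangle>\langle H,g^4\rangle>\cdots$ (after passing to freely indecomposable factors) contradicts the chain condition. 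Nothing is passed down the hierarchy; the single top-level Louder--Wilton statement does all the work.
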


The original notions of non-positive and negative immersions are due to Wise \cite{wise_03,wise_04}; note that they are not the same as Louder and Wilton's \cite{louder_21_uniform}. The aforementioned conjecture can be considered as a special case of an older conjecture of Wise \cite[Conjecture 14.2]{wise_04}. Another related property is that of uniform negative immersions, introduced in \cite{louder_21_uniform}. Results analogous to those proved by Louder and Wilton \cite{louder_21,louder_21_uniform} for one-relator groups with negative immersions have also been shown for fundamental groups of two-complexes with a stronger form of uniform negative immersions by Wise \cite{wise_21}. However, having uniform negative immersions turns out to be equivalent to having negative immersions in the case of one-relator complexes \cite[Theorem C]{louder_21_uniform}. 

Louder and Wilton's conjecture has been experimentally verified for all one-relator groups with negative immersions that admit a one-relator presentation with relator of length less than 17 \cite{cashen_21}. In this article we verify their conjecture and in fact prove more.

\begin{restate}{Theorem}{hyperbolicity_theorem}
One-relator groups with negative immersions are hyperbolic and virtually special.
\end{restate}

Despite the maturity of the theory of one-relator groups, the isomorphism problem has remained almost untouched. A subclass of one-relator groups that is often mentioned when demonstrating the difficulty of this problem is that of parafree one-relator groups \cite{chandler_82,baumslag_19}. Indeed, Baumslag asked \cite[Problem 4]{baumslag_85} whether the isomorphism problem for parafree one-relator groups is solvable. See also \cite[Question 3]{fine_21}. A large body of work has been carried out on distinguishing parafree one-relator groups; see, for example, \cite{fine_97,hung_17,hung_20,chen_21} and \cite{lewis_94,baumslag_04} for computational experiments. For several families of examples of parafree one-relator groups, see also \cite{baumslag_06}. A consequence of Theorem \ref{hyperbolicity_theorem} is the following.

\begin{restate}{Corollary}{isomorphism}
The isomorphism problem for one-relator groups with negative immersions is decidable within the class of one-relator groups.
\end{restate}

By showing that parafree one-relator groups have negative immersions, we may answer Baumslag's question in the affirmative.

\begin{restate}{Corollary}{parafree}
The isomorphism problem for parafree one-relator groups is decidable within the class of one-relator groups.
\end{restate}

We conclude this section by mentioning a couple of other corollaries.

\begin{restate}{Corollary}{residual_finiteness}
One-relator groups with negative immersions are residually finite and linear.
\end{restate}

\begin{restate}{Corollary}{hyperbolicity_corollary}
Every finitely generated subgroup of a one-relator group with negative immersions is hyperbolic.
\end{restate}

We now present the main new theorems that go into proving Theorem \ref{hyperbolicity_theorem}.

\subsection{Magnus--Moldavanskii--Masters hierarchies}

First conceived by Magnus in his thesis \cite{magnus_30}, the Magnus hierarchy is possibly the oldest general tool in the theory of one-relator groups. After the introduction of the theory of HNN-extensions of groups, the hierarchy was later refined to be called the Magnus--Moldavanskii hierarchy \cite{moldavanskii_67}: if $G$ is a one-relator group, then there is a diagram of monomorphisms of one-relator groups
\[
\begin{tikzcd}
G = G_0 \arrow[r, hook]		   & G'_0 \\
G_1	 \arrow[ur, hook] \arrow[r, hook] & G'_1 \\
\cdots \arrow[ur, hook] \arrow[r, hook] & \cdots \\
G_N \arrow[ur, hook]			& 
\end{tikzcd}
\]
such that $G_i'\isom G_{i+1}*_{\psi_i}$ where $\psi_i$ identifies two Magnus subgroups of $G_{i+1}$, and $G_N$ splits as a free product of cyclic groups. The proofs of many results for one-relator groups then proceed by induction on the length of such a hierarchy. See \cite{magnus_04} for a classically flavoured introduction to one-relator groups with many such examples.

In \cite{masters_06}, Masters showed that we can dispense with the horizontal homomorphisms. In other words, if $G$ is a one-relator group, there is a sequence of monomorphisms of one-relator groups:
\[
G_N\injects...\injects G_1\injects G_0 = G
\]
such that $G_i \isom G_{i+1}*_{\psi_i}$ where $\psi_i$ identifies two Magnus subgroups of $G_{i+1}$, and $G_N$ splits as a free product of cyclic groups.

\subsection{One-relator hierarchies} 

The versatility of the Magnus--Moldavanskii hierarchy comes from the fact that it may be described very explicitly in terms of one-relator presentations. Masters' hierarchy is conceptually simpler, but is not so explicit. By working with two-complexes, we may reconcile both of these advantages. Our version of the hierarchy can be stated as follows.

\begin{restate}{Theorem}{new_hierarchy}
Let $X$ be a finite one-relator complex. There exists a finite sequence of immersions of one-relator complexes:
\[
X_N\immerses ...\immerses X_1\immerses X_0 = X
\]
such that $\pi_1(X_i)\isom \pi_1(X_{i+1})*_{\psi_i}$ where $\psi_i$ is induced by an identification of Magnus subgraphs, and such that $\pi_1(X_N)$ is finite cyclic.
\end{restate}

Each such HNN-splitting is called a \emph{one-relator splitting} and we call such a sequence of immersions a \emph{one-relator tower} for $X$. The sequence of immersions are \emph{tower maps} as defined in \cite{howie_81}: each immersion factors as an inclusion composed with a cyclic cover. The homomorphisms $\psi_i$ are thus induced by an identification of two subcomplexes by the deck group action. A maximal one-relator hierarchy is a maximal tower lifting of the induced map of the closed two-cell to the one-relator complex. Note that not every such tower lifting will induce a hierarchy of HNN-extensions of one-relator groups and thus the novelty of Theorem \ref{new_hierarchy} lies in showing that such a tower always exists. The proof of Theorem \ref{tower} relies on a technical and combinatorial analysis of cyclic covers of two-complexes. Some of its applications not mentioned in this article are explored in the author's thesis \cite{my_thesis}.

\subsection{$\Z$-stable one-relator hierarchies}

We now introduce the notion of $\Z$-stable one-relator hierarchies. Let $H$ be a group and $\psi:A\to B$ an isomorphism between subgroups of $H$. Inductively define 
\[
\mathcal{A}^{\psi}_0 = \{[H]\}, \quad \ldots ,\quad \mathcal{A}^{\psi}_{i+1} = \{[\psi(A\cap A_i)]\}_{A_i\in [A_i]\in \mathcal{A}^{\psi}_i}, \quad \ldots
\]
where $[A_i]$ denotes the conjugacy class of $A_i$ in $H$. Then we denote by $\bar{\mathcal{A}}_i^{\psi}\subset \mathcal{A}_i^{\psi}$ the subset corresponding to the conjugacy classes of non-cyclic subgroups. Define the \emph{$\Z$-stable number} $s\Z(\psi)$ of $\psi$ to be
\[
s\Z(\psi) = \sup{\{k + 1 \mid \bar{\mathcal{A}}_k^{\psi}\neq\emptyset\}}\in \N\cup\{\infty\}
\]
where $s\Z(\psi) = \infty$ if $\bar{\mathcal{A}}_i^{\psi}\neq \emptyset$ for all $i$. In general, even $\bar{\mathcal{A}}_2^{\psi}$ may contain infinitely many conjugacy classes of subgroups. However, we show in Lemma \ref{stab_ranks} that if $\pi_1(X)\isom \pi_1(X_1)*_{\psi}$ is a one-relator splitting as in Theorem \ref{new_hierarchy}, then
\[
\sum_{[A_n]\in \bar{\mathcal{A}}_n^{\psi}}\rr(A_n)\leq \rr(A)
\]
where $\rr(A) = \max{\{0, \rk(A) - 1\}}$ denotes the reduced rank of $A$.

\begin{definition}
A one-relator hierarchy $X_N\immerses...\immerses X_1\immerses X_0 = X$ is a \emph{$\Z$-stable hierarchy} if $s\Z(\psi_i)<\infty$ for all $i<N$.
\end{definition}

Our next result establishes an equivalence between quasi-convex one-relator hierarchies and $\Z$-stable one-relator hierarchies of hyperbolic one-relator groups.

\begin{restate}{Theorem}{main}
Let $X$ be a one-relator complex and $X_N\immerses ...\immerses X_1\immerses X_0 = X$ a one-relator hierarchy. The following are equivalent:
\begin{enumerate}
\item\label{itm:hqch} $X_N\immerses ...\immerses X_1\immerses X_0 = X$ is a quasi-convex hierarchy and $\pi_1(X)$ is hyperbolic.
\item\label{itm:acylindrical_h} $X_N\immerses ...\immerses X_1\immerses X_0 = X$ is an acylindrical hierarchy.
\item\label{itm:bs} $X_N\immerses ...\immerses X_1\immerses X_0 = X$ is a $\Z$-stable hierarchy and $\pi_1(X)$ contains no Baumslag--Solitar subgroups.
\end{enumerate}
Moreover, if any of the above is satisfied, then $\pi_1(X)$ is virtually special and the image of $\pi_1(A)$ in $\pi_1(X)$ is quasi-convex for any connected subcomplex $A\subset X_i$.
\end{restate}

In \cite{wise_21_quasiconvex}, Wise shows that Magnus--Moldavanskii hierarchies of one-relator groups with torsion are quasi-convex. We show that all one-relator hierarchies of one-relator complexes $X$ satisfying either of the following are $\Z$-stable:
\begin{enumerate}
\item $\pi_1(X)$ has torsion (by Corollary \ref{torsion_stable})
\item $X$ has negative immersions (by Corollary \ref{stable_negative})
\end{enumerate}
In the first case, since one-relator groups with torsion are hyperbolic \cite{newman_68}, we recover Wise's result. In the second case, since fundamental groups of one-relator complexes with negative immersions are two-free, they cannot contain Baumslag--Solitar subgroups and so we prove Louder and Wilton's conjecture. 

By showing that hyperbolic groups with quasi-convex hierarchies are virtually special \cite{wise_21_quasiconvex}, Wise also proved that one-relator groups with torsion are residually finite, settling an old conjecture of Baumslag \cite{baumslag_67}. As a consequence of Wise's work and Theorem \ref{main}, we may also establish virtual specialness, residual finiteness and linearity for one-relator groups with negative immersions.

Theorem \ref{main} is the crux of the article and is easily applicable to concrete examples, as demonstrated by the following example.

\begin{example}
\label{stable_example}
Consider the following one-relator hierarchy of length one:
\[
\langle x, y, z \mid z^2yz^2x^{-2} \rangle*_{\psi} \isom \langle a, b \mid b^2a^2b^{-1}aba^2b^{-2}a^{-2}\rangle
\]
where $\psi$ is given by $\psi(x) = y$, $\psi(y) = z$. Since $y = z^{-2}x^2z^{-2}$, we see that $\langle x, y, z\rangle$ is a free group freely generated by $x$ and $z$. Thus we have:
\begin{align*}
\bar{\mathcal{A}}_0^{\psi} &= \{[\langle x, z\rangle]\}\\
\bar{\mathcal{A}}_1^{\psi} &= \{[\langle x^2, z\rangle]\}\\
\bar{\mathcal{A}}_2^{\psi} & = \{[\langle (z^{-2}x^2z^{-2})^2, z\rangle]\}\\
\bar{\mathcal{A}}_3^{\psi} & = \emptyset
\end{align*}
Hence, $s\Z(\psi) = 3$ and this hierarchy is $\Z$-stable. Using a criterion for finding Baumslag--Solitar subgroups we develop in Subection \ref{sec:BS_criterion}, in Example \ref{no_BS_example} we show that 
\[
\langle a, b \mid b^2a^2b^{-1}aba^2b^{-2}a^{-2}\rangle
\]
does not contain a Baumslag--Solitar subgroup. Thus, by Theorem \ref{main}, it is hyperbolic and virtually special.
\end{example}

The choice of group in Example \ref{stable_example} is motivated by \cite{cashen_21} in which the authors verify hyperbolicity for one-relator groups with relator of length less than 17, using a combination of results from the literature and software in GAP and kbmag. This particular example does not satisfy any of the criteria the authors used: it is torsion-free, it is not small cancellation, it is not cyclically or conjugacy pinched, it does not satisfy the hypotheses of \cite[Theorems 3 \& 4]{ivanov_98} nor those of the hyperbolicity criterion in \cite{blufstein_19}. Moreover, it does not have negative immersions and does not split as an HNN-extension of a free group with a free factor edge group so that the results from \cite{mutanguha_21,mutanguha_21_stable} also do not apply.

\subsection{Outline of the article}

In Section \ref{sec:prelim_1}, we introduce the necessary background and terminology on graphs and one-relator complexes. There we introduce the notion of a strongly inert graph immersion and use it to prove a key result, Theorem \ref{subgroup_Magnus_intersection}, bounding the sum of reduced ranks of intersections of certain subgroups in a one-relator group. This feeds into the proof of Theorem \ref{main}. In Section \ref{sec:prelim_2}, we cover graphs of spaces and Bass--Serre theory. There we prove Proposition \ref{homotopy_equivalence}, a useful tool which will allow us to find HNN-splittings from cyclic covers of one-relator complexes as in Theorem \ref{new_hierarchy}. In Section \ref{sec:zdomain} we introduce the notion of a $\Z$-domain, prove that $\Z$-domains of $\Z$-covers of finite CW-complexes exist and that minimal $\Z$-domains of $\Z$-covers of one-relator complexes are one-relator complexes. Combined with a complexity reduction argument, we then use this to establish Theorem \ref{new_hierarchy}. Section \ref{sec:normal_forms} is dedicated to showing that a hyperbolic one-relator group with a quasi-convex hierarchy has all of its Magnus subgroups quasi-convex. The proof relies on a careful analysis of normal forms induced by the hierarchy from Theorem \ref{new_hierarchy}. At this point of the article, it is possible to establish the equivalence of (\ref{itm:hqch}) and (\ref{itm:acylindrical_h}) from Theorem \ref{main}. In Section \ref{sec:stable}, $\Z$-stable hierarchies are introduced and some of their properties are proven. All hierarchies of one-relator groups with torsion and negative immersions are then shown to have such hierarchies. The section is concluded with Theorem \ref{acylindrical_equivalent} which shows the equivalence of (\ref{itm:acylindrical_h}) and (\ref{itm:bs}) from Theorem \ref{main} and provides a criterion for finding Baumslag--Solitar subgroups. Finally, Section \ref{sec:main} is dedicated to combining all the new tools from the article to prove our main results, Theorems \ref{main} and \ref{hyperbolicity_theorem}.

\subsection*{Acknowledgements} We would like to thank Saul Schleimer and Henry Wilton for many stimulating conversations that helped improve the exposition of this article. We would also like to thank Lars Louder for his invaluable comments on Lemma \ref{strongly_inert}. Finally, we would like to thank the anonymous referee for the many detailed and insightful comments which have enormously improved this article.

\section{Graphs and one-relator complexes}
\label{sec:prelim_1}

A \emph{graph} $\Gamma$ is a 1-dimensional CW-complex. We will write $V(\Gamma)$ for the collection of 0-cells or \emph{vertices} and $E(\Gamma)$ for the collection of 1-cells or \emph{edges}. We will usually assume $\Gamma$ to be oriented. An orientation will be induced by maps $o:E(\Gamma)\to V(\Gamma)$ and $t:E(\Gamma)\to V(\Gamma)$, the \emph{origin} and \emph{target} maps. For simplicity, we will write $I$ to denote any connected graph whose vertices all have degree two, except for two vertices of degree one. Then $S^1$ will denote a connected graph all of whose vertices have degree precisely two.

A map between graphs $f:\Gamma\to \Gamma'$ is \textit{combinatorial} if it sends each vertex to a vertex and each edge homeomorphically to an edge. A combinatorial map is an \textit{immersion} if it is also locally injective. Combinatorial graph maps $\lambda:I\to \Gamma$, $\lambda:S^1\to \Gamma$ will be called \emph{paths} and \emph{cycles} respectively. The \emph{length} of a combinatorial path $\lambda:I\to \Gamma$ is the number of edges in $I$ and is denoted by $\abs{\lambda}$. If $\lambda:I\to X$ is a path, we may often identify the vertices of $I$ with the integers $0, 1, ..., \abs{\lambda}$ so that $\lambda(i)$ is the $i^{\text{th}}$ vertex that $\lambda$ traverses. We also put $o(\lambda) = \lambda(0)$ and $t(\lambda) = \lambda(\abs{\lambda})$. We similarly define the length of a cycle $\lambda:S^1\to \Gamma$. A cycle $\lambda:S^1\to \Gamma$ is \emph{primitive} if $\lambda$ does not factor through any non-trivial covering map $S^1\to S^1$. We will call it \emph{imprimitive} otherwise. We will write $\deg(\lambda)$ to denote the maximal degree of a covering map $S^1\to S^1$ that $\lambda$ factors through. Note that $\deg(\lambda) = 1$ if and only if $\lambda$ is primitive. We remark that this definition of primitivity should be compared with the definition of primitivity from the theory of combinatorics of words, not with the definition of primitivity in the theory of free groups. In particular, $\lambda$ being imprimitive is not the same as $\Ima(\lambda_*)$ being imprimitive in $\pi_1(\Gamma)$.

The \emph{core} of a graph $\Gamma$ is the subgraph consisting of the union of all the images of immersed cycles $S^1\immerses \Gamma$ and will be denoted by $\core(\Gamma)$. Note that if $\Gamma$ is a forest, then $\core(\Gamma) = \emptyset$. In particular $\core(\Gamma)$ is unique.

\subsection{Strongly inert graph immersions}

If $G$ is a group and $g, h\in G$, we will adopt the usual convention that $h^g = g^{-1}\cdot h\cdot g$. If $H<G$, we will write $[H]$ to denote the conjugacy class of $H$ in $G$. More generally, If $X, Y\subset G$ are subsets, then define the \emph{$Y$-conjugacy class} of $X$ to be the following:
\[
[X]_Y = \{X^y\mid y\in Y\}
\]

A subgroup $H<G$ is called \emph{inert} if for every subgroup $K<G$, we have $\rk(H\cap K)\leq \rk(K)$. This definition was first introduced in \cite{dicks_96}, motivated by the study of fixed subgroups of endomorphisms of free groups. More generally, as defined in \cite{ivanov_18}, we say that $H$ is \emph{strongly inert} if for every subgroup $K<G$, we have:
\[
\sum_{KgH}\rr(H\cap K^g)\leq \rr(K)
\]
Examples of strongly inert subgroups of free groups are:
\begin{enumerate}
\item subgroups of rank at most two \cite{tardos_92},
\item subgroups that are the fixed subgroup of an injective endomorphism \cite[Theorem IV.5.5]{dicks_96},
\item subgroups that are images of immersions of free groups, as defined in \cite{kapovich_00}.
\end{enumerate}
The latter example follows by observing that the fibre product (in the sense of \cite{sta_83}) of two rose graphs can contain at most a single vertex of valence greater than two.

\begin{remark}
\label{cyclonormal_remark}
If $H<G$ is strongly inert, then applying the definition to $K = H$, we get that $\rr(H\cap H^g)\leq 0$ for all $g\notin H$. Thus, $H$ is \emph{cyclonormal}.
\end{remark}

Translating this to graphs, we make the following definition.

\begin{definition}
A graph immersion $\gamma:\Gamma\immerses \Delta$ is \emph{strongly inert} if for all graph immersions $\lambda:\Lambda\immerses\Delta$, the following is satisfied:
\[
\chi(\core(\Gamma\times_{\Delta}\Lambda))\geq \chi(\core(\Lambda)).
\]
\end{definition}

Let us briefly explain how this is a translation of strong inertia to graphs. If $\Delta$ is a (non-empty) connected core graph, we have that $\rr(\pi_1(\Delta)) = -\chi(\Delta)$. By work of Stallings \cite{sta_83}, if $\gamma\colon\Gamma\immerses\Delta$ and $\lambda\colon\Lambda\immerses\Delta$ are graph immersions, then the components of $\core(\Gamma\times_{\Delta}\Lambda)$ are in natural bijection with the non-trivial intersections $\gamma_*\pi_1(\Gamma)\cap\lambda_*\pi_1(\Lambda)^g$, as $g$ varies over representatives for the double cosets $\lambda_*\pi_1(\Lambda)g\gamma_*\pi_1(\Gamma)$. As such, we have 
\[
-\chi(\core(\Gamma\times_{\Delta}\Lambda)) = \sum_{\lambda_*\pi_1(\Lambda)g\gamma_*\pi_1(\Gamma)}\rr(\gamma_*\pi_1(\Gamma)\cap\lambda_*\pi_1(\Lambda)^g).
\]

The following lemma will produce more examples of strongly inert subgroups of free groups.

\begin{lemma}
\label{strongly_inert}
Let $\gamma:\Gamma\immerses \Delta$ be an immersion of finite graphs. If $\gamma_*\pi_1(\Gamma)*F$ is an inert subgroup of $\pi_1(\Delta)*F$ for all free groups $F$, then $\gamma$ is a strongly inert graph immersion. 
\end{lemma}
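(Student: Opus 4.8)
The plan is to deduce strong inertness from ordinary inertness by an ``unwinding'' construction that collapses all the relevant double cosets of $\gamma_*\pi_1(\Gamma)$ and $\lambda_*\pi_1(\Lambda)$ into a single one. First I would make the routine reductions. Any immersed cycle in a fibre product projects to immersed cycles in the two factors, so $\core(\Gamma\times_\Delta\Lambda)=\core(\core(\Gamma)\times_\Delta\core(\Lambda))$; moreover both sides of the desired inequality are additive over the components of $\Lambda$. Hence we may assume $\Gamma,\Lambda,\Delta$ are connected and $\Gamma=\core(\Gamma)$, $\Lambda=\core(\Lambda)$. For a connected graph $Z$ one has $\chi(Z)=1-\rk\pi_1(Z)$, and for each component $Z'$ of a graph $-\chi(\core(Z'))=\rr(\pi_1(Z'))$, which is $0$ if $\pi_1(Z')$ is cyclic and $\rk\pi_1(Z')-1$ otherwise. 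So, writing $C_1,\dots,C_m$ for the components of $\Gamma\times_\Delta\Lambda$ with non-cyclic fundamental group, the inequality $\chi(\core(\Gamma\times_\Delta\Lambda))\ge\chi(\core(\Lambda))$ becomes
\[
\sum_{j=1}^m\bigl(\rk\pi_1(C_j)-1\bigr)\le\rk\pi_1(\Lambda)-1 .
\]
If $m=0$ there is nothing to prove, so assume $m\ge1$.

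The heart of the argument is the construction. Crucially, I would choose base vertices $\star_\Gamma\in\Gamma$ and $\star_\Lambda\in\Lambda$ lying over a common vertex $v_1\in\Delta$ with $(\star_\Gamma,\star_\Lambda)$ inside the \emph{non-cyclic} component $C_1$. For each $j\ge2$ pick a vertex $(u_j,w_j)\in C_j$, set $v_j:=\gamma(u_j)=\lambda(w_j)$, and fix a path $a_j$ in $\Gamma$ from $\star_\Gamma$ to $u_j$. Now form $\Delta^+$ by attaching to $\Delta$ a new edge $d_j$ from $v_1$ to $v_j$ for each $j=2,\dots,m$; form $\Gamma^+$ by attaching an edge $d_j^\Gamma$ from $\star_\Gamma$ to $u_j$ lying over $d_j$; and form $\Lambda^+$ by attaching an edge $d_j^\Lambda$ from $\star_\Lambda$ to $w_j$ lying over $d_j$. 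Since each new edge of $\Gamma^+$ and of $\Lambda^+$ maps to a new edge of $\Delta^+$, the maps $\gamma^+\colon\Gamma^+\immerses\Delta^+$ and $\lambda^+\colon\Lambda^+\immerses\Delta^+$ remain immersions — no folding is forced at $u_j$ or $w_j$. Moreover $\pi_1(\Delta^+)\isom\pi_1(\Delta)*F$ with $F$ free of rank $m-1$, and if the path $\gamma(a_j)$ is used to identify the $j$-th free generator then $\gamma^+_*\pi_1(\Gamma^+)\isom\gamma_*\pi_1(\Gamma)*F$ sits inside $\pi_1(\Delta^+)$ as the standard free-factor subgroup. By hypothesis it is therefore inert in $\pi_1(\Delta^+)$.

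It then remains to count. Because no new vertices were introduced and any new edge of the fibre product must lie over some $d_j$, one checks that $\Gamma^+\times_{\Delta^+}\Lambda^+$ is exactly $\Gamma\times_\Delta\Lambda$ with the $m-1$ extra edges $(d_j^\Gamma,d_j^\Lambda)$ adjoined, each joining $(\star_\Gamma,\star_\Lambda)\in C_1$ to $(u_j,w_j)\in C_j$. Hence the component $C_0^+$ of $(\star_\Gamma,\star_\Lambda)$ is $C_1\cup\cdots\cup C_m$ joined in a star by $m-1$ edges, so $\rk\pi_1(C_0^+)=\sum_{j=1}^m\rk\pi_1(C_j)$. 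Since $C_0^+$ computes $\gamma^+_*\pi_1(\Gamma^+)\cap\lambda^+_*\pi_1(\Lambda^+)$, while $\rk\lambda^+_*\pi_1(\Lambda^+)=\rk\pi_1(\Lambda)+(m-1)$ (we attached $m-1$ arcs to $\Lambda$), inertness gives $\sum_{j=1}^m\rk\pi_1(C_j)\le\rk\pi_1(\Lambda)+(m-1)$, which is the displayed inequality, and translating back completes the proof.

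The delicate point — and where I expect the argument is most easily broken — is exactly this Euler-characteristic bookkeeping: choosing the basepoint outside a non-cyclic component, or wedging loops at $v_1$ rather than attaching arcs running out to the $v_j$, costs an additive constant that the inert inequality $\rk(H\cap K)\le\rk(K)$ cannot absorb, and the conclusion degrades to $\sum\rr(\pi_1(C_j))\le\rr(\pi_1(\Lambda))+\text{const}$. Placing $(\star_\Gamma,\star_\Lambda)$ inside $C_1$ is precisely what makes the $m-1$ surplus loops of $\Lambda^+$ be paid for exactly by the $m-1$ edges needed to string the components $C_j$ together, so that the count closes on the nose.
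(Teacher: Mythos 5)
Your proof is correct and uses the same governing idea as the paper's — apply the inertness hypothesis to a new immersion obtained by enlarging $\Gamma$, $\Delta$ (and $\Lambda$) by $m-1$ new edges that carry a rank-$(m-1)$ free factor $F$ and bridge together the non-cyclic components of the fibre product — but the way you realize $F$ is different and leads to a cleaner argument.

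The paper attaches $m-1$ loops at the basepoints of $\Delta$ and $\Gamma$, and then attaches $m-1$ arcs to $\Lambda$ mapped to composite paths $\gamma'\circ g_i$. Because those arcs map to paths rather than single edges, $\lambda'\colon\Lambda'\to\Delta'$ is not an immersion and one has to fold to get $\Lambda''$, argue that the fold is a homotopy equivalence, and then track how the fold changes the fibre product; the conclusion that $\core(\Gamma'\times_{\Delta'}\Lambda'')$ is connected up to Euler-characteristic-zero components is asserted rather than spelled out. You instead attach new \emph{arcs} $d_j$ from $v_1$ to $v_j$ directly in $\Delta$, and lift each to a single new arc in $\Gamma$ and in $\Lambda$. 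Because each $d_j^\Gamma$ and $d_j^\Lambda$ maps bijectively to a brand-new edge of $\Delta^+$, $\gamma^+$ and $\lambda^+$ are automatically immersions, no folding is needed, and the fibre product $\Gamma^+\times_{\Delta^+}\Lambda^+$ is \emph{exactly} the old one with the $m-1$ bridge edges $(d_j^\Gamma,d_j^\Lambda)$ adjoined. This makes the Euler-characteristic count transparent: $\rk\pi_1(C_0^+)=\sum_j\rk\pi_1(C_j)$ and $\rk\pi_1(\Lambda^+)=\rk\pi_1(\Lambda)+(m-1)$, so inertness closes the inequality on the nose. The choices that make this work — placing the common basepoint $(\star_\Gamma,\star_\Lambda)$ inside a non-cyclic component $C_1$, and running arcs out to the $v_j$ rather than wedging loops at a basepoint — are exactly the ones you flag as delicate, and they are what save you from the fold-and-track bookkeeping that the paper's version requires. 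In short: same free-factor strategy, but your realization of $F$ by arcs rather than basepoint loops eliminates the folding step and yields an explicit description of the new fibre product. Both proofs are valid.
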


\begin{proof}
Suppose for a contradiction that there exists a graph immersion $\lambda:\Lambda\immerses \Delta$ such that:
\[
\chi(\core(\Gamma\times_{\Delta}\Lambda))< \chi(\core(\Lambda)).
\]
Let $\Theta_0, ..., \Theta_k\subset \core(\Gamma\times_{\Delta}\Lambda)$ be the connected components. Choose vertices, $(v_i, w_i)\in V(\Theta_i)$ for each $0\leqslant i\leqslant k$. Let $\Delta'$ be the graph with vertex set $V(\Delta)$ and edge set $E(\Delta)\sqcup \{e_1, \ldots, e_k\}$ where the origin and target of each $e_i$ is the basepoint. Similarly, we let $\Gamma'$ be the graph with vertex set $V(\Gamma)$ and edge set $E(\Gamma)\sqcup \{f_1, \ldots, f_k\}$ where the origin and target of each $f_i$ is the basepoint. We then define a graph map $\gamma'\colon \Gamma'\immerses\Delta'$ by $\gamma'(v) = \gamma(v)$ for each $v\in V(\Gamma')$, $\gamma'(f) = \gamma(f)$ if $f\in E(\Gamma)$ and $\gamma'(f_i) = e_i$ for each $1\leqslant i\leqslant k$. We have that $\pi_1(\Delta') = \pi_1(\Delta)*F$ where $F$ is a free group of rank $k$ and $\gamma'_*\pi_1(\Gamma') = \gamma_*\pi_1(\Gamma)*F$. Hence, by assumption, $\gamma'_*\pi_1(\Gamma')$ is inert in $\pi_1(\Delta')$.

Now for each $1\leqslant i\leqslant k$, let $g_i:I_i\immerses \Gamma$ be any immersed path such that $g_i$ begins at $v_{i-1}$, ends at $v_{i}$, traverses $f_i$ precisely once and does not traverse $f_j$ for any $j\neq i$. Let $\Lambda'$ be the graph obtained from $\Lambda$ by attaching segments $I_i$ with origin $w_{i-1}$ and with target $w_{i}$ for each $1\leqslant i\leqslant k$. Let $\lambda':\Lambda'\to \Delta'$ be the map obtained by extending $\lambda$ by defining $\lambda'\mid_{I_i} = \gamma\circ g_i$. Finally, let $\lambda'':\Lambda''\immerses\Delta$ be the graph immersion obtained from $\lambda'$ by folding. By construction, for each $i$ there is only one edge in $\Lambda'$ that maps to $e_i$. Thus, each such edge does not get identified with any other edge under the folding map and so the folding map $\Lambda'\to \Lambda''$ is a homotopy equivalence, restricting to an isomorphism on $\Lambda\subset\Lambda'$. Now not only will $\core(\Gamma'\times_{\Delta'}\Lambda'')$ contain $\core(\Gamma\times_{\Delta}\Lambda)$ as a subgraph, but it will also contain segments connecting $\Theta_{i-1}$ to $\Theta_i$ for all $1\leqslant i\leqslant k$ by construction. We have $\chi(\Lambda'') = \chi(\Lambda) - k$, and so:
\begin{align*}
\chi(\core(\Gamma'\times_{\Delta'}\Lambda'')) &= \chi(\core(\Gamma'\times_{\Delta'}\Lambda)) - k\\
								  &<\chi(\core(\Lambda'')).
\end{align*}
However, $\core(\Gamma'\times_{\Delta'}\Lambda'')$ is connected by construction, contradicting the fact that $\gamma'_*\pi_1(\Gamma')$ was inert in $\pi_1(\Delta')$.
\end{proof}

\subsection{One-relator complexes and Magnus subgraphs}

For simplicity, we will mostly be restricting our attention to particular kinds of CW-complexes called combinatorial $2$-complexes.

\begin{definition}
A \emph{combinatorial $2$-complex} $X$ is a $2$-dimensional CW-complex whose attaching maps are all immersions. We will usually write $X = (\Gamma, \lambda)$ where $\Gamma$ is a graph and $\lambda:\mathbb{S} = \sqcup S^1\immerses \Gamma$ is an immersion of a disjoint union of cycles.
\end{definition}

We will also restrict our maps to combinatorial maps. 

\begin{definition}
A \emph{combinatorial map} of combinatorial $2$-complexes $f:Y\to X$ is a map that restricts to a combinatorial map of graphs $f_{\Gamma}:\Gamma_Y\to \Gamma_X$ and induces a combinatorial map $f_{\mathbb{S}}:\mathbb{S}_Y\to \mathbb{S}_X$ such that $f_{\Gamma}\circ \lambda_Y = \lambda_X\circ f_{\mathbb{S}}$. We say that $f$ is an \emph{immersion} if $f_{\Gamma}$ is an immersion and $f_{\mathbb{S}}$ restricts to a homeomorphism on each component.
\end{definition}

Since we will always be assuming that our maps are combinatorial, we will often simply neglect to use the descriptor.

The main class of $2$-complexes that we will be working with are \emph{one-relator complexes}. That is, combinatorial $2$-complexes of the form $X = (\Gamma, \lambda)$ where $\lambda:S^1\immerses \Gamma$ is an immersion of a single cycle. Denote by $X_{\lambda}\subset X$ the smallest subcomplex that is a one-relator complex. The following result is the classic Freiheitssatz of Magnus \cite{magnus_30}.

\begin{theorem}
\label{freiheitssatz}
Let $X = (\Gamma, \lambda)$ be a one-relator complex. If $\Lambda\subset \Gamma$ is a connected subgraph in which $\lambda$ is not supported, then $\pi_1(\Lambda)\to \pi_1(X)$ is injective.
\end{theorem}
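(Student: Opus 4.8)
The plan is to recast Magnus's original argument in the language of cyclic covers of combinatorial $2$-complexes, arguing by induction on the length $\abs{\lambda}$ of the relator. Note first that $\pi_1(\Lambda)$ is free, being the fundamental group of a graph, so the assertion is that it embeds as a free subgroup. If $\abs{\lambda}=1$ then $\lambda$ traverses a single edge $e$, so $\pi_1(X)\isom\pi_1(\Gamma)/\normal{e}$ is free on a basis of which the edges of $\Lambda$ form a subset (as $e\notin E(\Lambda)$), and the claim is clear. For the inductive step, the hypothesis provides an edge $t$ traversed by $\lambda$ with $t\notin E(\Lambda)$. Write $\sigma_e(\lambda)\in\Z$ for the signed number of times $\lambda$ crosses an edge $e$; the argument divides according to whether some edge traversed by $\lambda$ has exponent sum zero.

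Suppose some edge $e$ traversed by $\lambda$ has $\sigma_e(\lambda)=0$. Then counting signed $e$-crossings defines a homomorphism $\pi_1(X)\to\Z$, and we pass to the associated infinite cyclic cover $\widehat X\to X$, with deck group $\Z$. The cell structure of $\widehat X$ exhibits it as an increasing union of finite subcomplexes $\widehat X_{[m,M]}$, each a graph of spaces whose vertex spaces are copies of a single one-relator complex $X_0=(\Gamma_0,\lambda_0)$ --- here $\Gamma_0$ is a chain of finitely many copies of $\Gamma$ cut along $e$, and $\lambda_0$ is obtained from a lift of $\lambda$ by erasing its $e$-crossings, so that $\abs{\lambda_0}<\abs{\lambda}$ --- and whose edge spaces are Magnus subcomplexes of $X_0$. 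By the inductive hypothesis applied to $X_0$, these edge spaces are $\pi_1$-injective, so $\pi_1(\widehat X_{[m,M]})$ is the corresponding graph of groups; in particular each inclusion $\widehat X_{[m,M]}\subseteq\widehat X_{[m-1,M+1]}$ is $\pi_1$-injective, hence so is $\widehat X_{[m,M]}\subseteq\widehat X$, giving $\pi_1(\widehat X_{[m,M]})\injects\pi_1(\widehat X)=\ker(\pi_1(X)\to\Z)\injects\pi_1(X)$. It remains to locate $\Lambda$. If $e\notin E(\Lambda)$ then $\Lambda$ lifts homeomorphically into a single layer of some $\widehat X_{[m,M]}$, landing inside a subcomplex of $X_0$ that omits an edge traversed by $\lambda_0$ (using that $\lambda_0$ is cyclically reduced and meets at least two layers); by the inductive Freiheitssatz for $X_0$ this subcomplex is $\pi_1$-injective, and we are done. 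If $e\in E(\Lambda)$, then a connected component $\Lambda^\circ$ of the preimage of $\Lambda$ in $\widehat X$ is an amalgam of free groups along a subgraph of the Bass--Serre tree, hence visibly $\pi_1$-injective in $\widehat X$; comparing the short exact sequences relating $\pi_1(\Lambda^\circ)\leq\pi_1(\Lambda)$ and $\pi_1(\widehat X)\leq\pi_1(X)$ to their images in $\Z$ then forces $\pi_1(\Lambda)\injects\pi_1(X)$.

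Suppose instead that every edge traversed by $\lambda$ has nonzero exponent sum. If only one edge is traversed, then $\lambda$ is a power of a loop-edge not lying in $E(\Lambda)$, so $\pi_1(X)$ is a free product $F\ast C$ with $C$ finite cyclic and $F$ free on a set of edges of $\Gamma$ that includes those of $\Lambda$, and $\pi_1(\Lambda)$ embeds. Otherwise we appeal to the classical device of Magnus: picking two edges traversed by $\lambda$ and adjoining a new generator together with a compensating substitution produces a one-relator complex in which some edge traversed by the relator does have exponent sum zero, and which carries $\pi_1(\Lambda)$ as the fundamental group of a suitable subcomplex, thereby reducing to the first case.

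The step I expect to be the main obstacle is precisely this last reduction, together with the precise form of the induction: a naive induction on $\abs{\lambda}$ does not obviously close in the ``nonzero exponent sum'' case, since the substitution can lengthen the relator, so one must instead set up a strong induction in which this case is shown to reduce, after one application of the cyclic-cover step, to a one-relator complex with strictly shorter relator. The other delicate point is the bookkeeping in the cyclic-cover case: one must pin down exactly which Magnus subcomplex of $X_0$ contains the relevant lift of $\Lambda$, and handle $e\in E(\Lambda)$, where $\Lambda$ spreads across the Bass--Serre line rather than lifting. An alternative that sidesteps the case analysis is Howie's maximal-tower argument: a putative counterexample yields a reduced disc diagram whose maximal tower lift is a one-relator complex with trivial fundamental group whose relator is carried by a proper subcomplex, contradicting the length-one case.
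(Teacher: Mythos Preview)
The paper does not prove this theorem: it is stated as Magnus's classical Freiheitssatz with a citation to \cite{magnus_30}, so there is no argument in the paper to compare against. Your outline is recognisably Magnus's original induction, and the obstacles you flag at the end (closing the induction in the nonzero-exponent-sum case, and the bookkeeping for where $\Lambda$ sits in the cover) are exactly the genuine ones; both are resolved in the classical literature along the lines you sketch, and the Howie tower alternative you mention is also a valid route.

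Two technical points you do not flag. First, for ``counting signed $e$-crossings'' to yield a \emph{nontrivial} homomorphism $\pi_1(X)\to\Z$ (and hence a connected infinite cyclic cover), the edge $e$ must be non-separating in $\Gamma$; if $e$ is a bridge, every closed path has signed $e$-count zero and your cover degenerates. In the algebraic version one works over a rose and this never arises, but in your combinatorial-complex formulation you must either collapse a spanning tree first or observe that a separating $e$ can be handled directly (removing it splits $X$ as a free product, and $\Lambda$ lies in one factor). Second, the assertion $\abs{\lambda_0}<\abs{\lambda}$ by ``erasing $e$-crossings'' is not correct as written: the lift of $\lambda$ to the cyclic cover has length exactly $\abs{\lambda}$, still traversing the (now vertical) lifts of $e$. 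What makes the relator shorter is that those $e$-lifts form a subtree of the tree domain, and only after collapsing them does the attaching map shorten. Since you are inducting on $\abs{\lambda}$, that collapse must be an explicit part of the inductive step, and you must track $\Lambda$ through it. Alternatively, the complexity $c(X)$ of Proposition~\ref{complexity} strictly decreases at every cyclic-cover step without any collapsing, and would give a cleaner induction measure here --- though the paper uses it for the hierarchy rather than for the Freiheitssatz itself.
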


We will call subgraphs of one-relator complexes that satisfy the hypothesis of Theorem \ref{freiheitssatz}, \emph{Magnus subgraphs}. This is in analogy with \emph{Magnus subgroups}: if $G$ is a one-relator group with one-relator presentation $\langle \Sigma \mid r\rangle$, with $r$ cyclically reduced, then a Magnus subgroup for this presentation is a subgroup generated by a subset $\Lambda\subset \Sigma$ such that $r\notin \langle\Lambda\rangle<F(\Sigma)$.

If $X$ is a one-relator complex and $A\subset X$ is a Magnus subgraph, then $\pi_1(A)$ is a Magnus subgroup for some one-relator presentation of $\pi_1(X)$. We can see this by taking a spanning tree $T\subset \Gamma$ such that $T\cap A$ is a spanning tree for $A$. Then by contracting $T$, we obtain a presentation complex for a one-relator presentation of $\pi_1(X)$ in which $\pi_1(A)$ is a Magnus subgroup.

If $B\subset X$ is another Magnus subgraphs and $A\cap B$ is connected, then, as above, we may obtain a one-relator presentation for $\pi_1(X)$ in which both $\pi_1(A)$ and $\pi_1(B)$ are Magnus subgroups. If $A\cap B$ is not connected, then this is no longer the case. Nevertheless, by adding edges to $A, B$ so that $A\cap B$ is connected, we see that $\pi_1(A)*F$ and $\pi_1(B)*F$ are Magnus subgroups for some one-relator presentation of $\pi_1(X)*F$, where $F$ is a finitely generated free group.

The interactions between Magnus subgroups of one-relator groups are well understood. The following theorems are the main results in \cite{collins_04} and \cite{collins_08} respectively. 

\begin{theorem}
\label{exceptional_intersection}
Let $X = (\Gamma, \lambda)$ be a one-relator complex and let $A, B\subset X$ be Magnus subgraphs with $A\cap B$ connected. Then one of the following holds:
\begin{enumerate}
\item $\pi_1(A)\cap \pi_1(B) = \pi_1(A\cap B)$,
\item $\pi_1(A)\cap \pi_1(B) = \pi_1(A\cap B)*\langle c\rangle$ for some $c\in \pi_1(X)$.
\end{enumerate}
If $\lambda$ is imprimitive, then $\pi_1(A)\cap \pi_1(B) = \pi_1(A\cap B)$.
\end{theorem}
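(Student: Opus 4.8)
The plan is to induct on the hierarchy length $h(X)$, using the one-relator splitting $\pi_1(X)\isom \pi_1(X_1)*_{\psi}$ supplied by Theorem \ref{new_hierarchy} (equivalently, the classical Magnus--Moldavanskii HNN decomposition of $\pi_1(X)$ with some stable letter $t$ over Magnus subgroups $P,Q<\pi_1(X_1)$, which is the form in which Collins argues). When $h(X)=0$, $\pi_1(X)$ is a free product of cyclic groups; by Lemma \ref{Magnus_free_factor} the subgroups $\pi_1(A),\pi_1(B)$ sit inside a pair of Magnus subgroups of a one-relator presentation of $\pi_1(X)*F$ as free factors, so it suffices to analyse intersections of Magnus subgroups in a free product of cyclics. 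Here everything can be realised by folded objects over the Bass--Serre tree of the free-product decomposition and $\pi_1(A)\cap\pi_1(B)$ computed as a (graph-of-groups) fibre product; the only way an element not carried by $A\cap B$ can appear is through the unique defining relation, and that can contribute at most a single extra cyclic free factor $\langle c\rangle$. If moreover $\lambda$ is a proper power $\ell^k$ with $k\geq 2$, the relevant quotient contributes a torsion factor $\Z/k$ rather than a relation of the form ``$c=c'$'', and one checks directly that no exceptional generator survives — the base-case instance of the imprimitivity clause.

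For the inductive step, write $G=\pi_1(X)=E*_{\psi\colon P\to Q}$ with Bass--Serre tree $T$, where $E=\pi_1(X_1)$ has smaller hierarchy length. A Magnus subgroup $\pi_1(A)<G$ either fixes a point of $T$, hence is conjugate into $E$ where it is again a Magnus subgroup, or acts with a minimal invariant subtree and inherits a graph-of-groups decomposition whose vertex groups are conjugates of Magnus subgroups of $E$ and whose edge groups are intersections of $P,Q$ with Magnus subgroups of $E$. Applying the standard description of subgroups of HNN extensions, $\pi_1(A)\cap\pi_1(B)$ is then assembled from a graph of groups whose vertex and edge groups are intersections of Magnus subgroups of $E$, and intersections of $P,Q$ with Magnus subgroups of $E$. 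By the inductive hypothesis (Theorem \ref{exceptional_intersection} applied to $X_1$), together with the corresponding control on how the edge groups $P,Q$ meet Magnus subgroups of $E$ and with Theorem \ref{freiheitssatz}, each such piece is the expected ``common part'' possibly times a single extra cyclic factor. One must then show that all of these potential exceptional generators collapse to at most one globally, and to none when $\lambda$ is imprimitive — imprimitivity of $\lambda$ being inherited by the relator of $X_1$ and forcing extra rigidity via Newman-type spelling considerations for one-relator groups with torsion.

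The main obstacle is precisely this bookkeeping in the inductive step: a priori each vertex and each edge of the graph-of-groups decomposition of $\pi_1(A)\cap\pi_1(B)$, as well as each cycle in the underlying graph, could contribute its own new element, and one must prove — through a close analysis of which Magnus subgroups of $E$ can meet $P$ or $Q$ exceptionally and of how $\psi$ interacts with the Magnus structure — that these all lie in a single conjugate of one $\langle c\rangle$, and that cycles contribute nothing beyond $\pi_1(A\cap B)$. This is the technical heart of \cite{collins_04, collins_08}. A secondary subtlety, already visible in Lemma \ref{Magnus_free_factor}, is that although $A\cap B$ is assumed connected, the intersections generated internally by the induction need not be; one therefore works in $\pi_1(X)*F$ and must afterwards check that the exceptional generator is not an artefact of the added free factor.
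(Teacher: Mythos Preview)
The paper does not prove Theorem \ref{exceptional_intersection}; it is quoted as the main result of \cite{collins_04} and used as a black box. So there is no ``paper's own proof'' to compare against --- the statement is imported, and everything downstream (Lemma \ref{Magnus_inert}, Theorem \ref{subgroup_Magnus_intersection}, Lemma \ref{exceptional_stable}) simply cites it.

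As for your proposal itself: what you have written is not a proof but a plan, and you explicitly acknowledge this by saying the key bookkeeping ``is the technical heart of \cite{collins_04,collins_08}''. The outline --- induct along a Magnus--Moldavanskii HNN decomposition, track how Magnus subgroups of $G$ meet $E=\pi_1(X_1)$ and the edge groups $P,Q$, and assemble the intersection from a graph of groups --- is indeed Collins's strategy. But the substance lies exactly where you stop: proving that the many potential exceptional cyclic factors arising at vertices, edges and cycles of that graph of groups all collapse to at most one. Your base case is also not really established: when $h(X)=0$ you assert that ``the only way an element not carried by $A\cap B$ can appear is through the unique defining relation, and that can contribute at most a single extra cyclic free factor'', but this is precisely the content of the theorem in the base case and needs an argument (in Collins's treatment this is already nontrivial). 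Likewise, the imprimitive case is handled by Newman's spelling theorem and its consequences, not by the vague ``torsion factor rather than a relation'' remark. If you want this to stand as a proof rather than a reference, you would need to supply these arguments; otherwise, citing \cite{collins_04} as the paper does is the honest move.
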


We say a pair of Magnus subgroups $\pi_1(A)$, $\pi_1(B)<\pi_1(X)$ have \emph{exceptional intersection} if the latter situation occurs.

\begin{theorem}
\label{conjugates_intersection}
Let $X = (\Gamma, \lambda)$ be a one-relator complex and let $A, B\subset X$ be Magnus subgraphs with $A\cap B$ connected. Then for any $g\in \pi_1(X)$, one of the following holds:
\begin{enumerate}
\item $\pi_1(A)\cap \pi_1(B)^g = 1$,
\item $\pi_1(A)\cap \pi_1(B)^g = \langle c\rangle$ for some $c\in \pi_1(X)$,
\item $g\in \pi_1(B)\cdot \pi_1(A)$.
\end{enumerate}
If $\lambda$ is imprimitive, then either $\pi_1(A)\cap \pi_1(B)^g = 1$ or $g\in \pi_1(B)\cdot \pi_1(A)$.
\end{theorem}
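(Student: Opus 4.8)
The statement is a theorem of Collins, and the plan I would follow mirrors the inductive strategy used for the Freiheitssatz itself: induct on the length of the relator of a one-relator presentation of $\pi_1(X)$, using the Magnus--Moldavanskii hierarchy to descend to a one-relator group with strictly shorter relator at each step. The base of the induction is the case where $\pi_1(X)$ is a free product of cyclic groups; there the three alternatives follow from the Kurosh subgroup theorem, or equivalently from the action on the Bass--Serre tree of the free-product splitting, since an intersection of conjugates of sub-free-products is either trivial, contained in a vertex stabiliser (a conjugate of a cyclic factor), or realised inside a common sub-free-product, the last of which forces $g\in\pi_1(B)\cdot\pi_1(A)$.

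\emph{Reduction step.} First I would reduce to the case where the relator is cyclically reduced and some generator $t$ occurring in it has exponent sum zero. If not, apply the classical Magnus rewriting trick to embed $\pi_1(X)$ into a one-relator group $\widehat{G}$ possessing a zero-exponent-sum generator, arranged so that the images of $\pi_1(A)$ and $\pi_1(B)$ are free factors of Magnus subgroups of $\widehat{G}$; enlarging $A$ and $B$ first so that $A\cap B$ is connected, exactly as in the discussion preceding Lemma \ref{Magnus_free_factor}, one checks that the three alternatives for the pair in $\widehat{G}$ restrict to the three alternatives for the original pair in $\pi_1(X)$, the auxiliary free generators not interfering because they lie outside the subgroups under consideration.

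\emph{HNN setup and Bass--Serre analysis.} With a zero-exponent-sum generator $t$ fixed, Moldavanskii's decomposition writes $\pi_1(X) = \langle H, t \mid t^{-1}M_{+}t = M_{-}\rangle$ as an HNN-extension of a one-relator group $H$ with shorter relator, where $M_{\pm}<H$ are Magnus subgroups of $H$. I would then establish a dictionary describing, up to conjugacy, how the given Magnus subgroups sit with respect to this splitting: after an appropriate change of generating set, each of $\pi_1(A)$ and $\pi_1(B)$ is either contained in $H$ (as a Magnus subgroup of $H$ up to a free factor), or generated by such a subgroup of $H$ together with $t$, in which case it contains a conjugate of an edge group $M_{\pm}$. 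Next, analyse $\pi_1(A)\cap\pi_1(B)^{g}$ through its action on the Bass--Serre tree $T$. If the intersection fixes a vertex of $T$ it is conjugate into $H$, where it is contained in the intersection of a conjugate of a Magnus subgroup of $H$ with a Magnus subgroup of $H$; the inductive hypothesis supplies the three alternatives there, and one promotes ``$g$ lies in a double coset of Magnus subgroups of $H$'' to ``$g\in\pi_1(B)\cdot\pi_1(A)$'' using that the relevant double cosets are governed by $H$ and the edge groups. If the intersection fixes no vertex, a Britton's-lemma normal-form argument, combined with the fact that a nontrivial element of $\pi_1(A)\cap\pi_1(B)^{g}$ acting hyperbolically on $T$ would force one of the two Magnus subgroups to contain the stable letter in an essential way, pins the intersection down to an edge stabiliser (alternative (2)) or forces $g\in\pi_1(B)\cdot\pi_1(A)$.

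\emph{Main obstacle and the imprimitive case.} The step I expect to be hardest is the bookkeeping of the exceptional cyclic factor: one must ensure that across the induction no more than a single extra generator ever appears, so that the subgroup $\langle c\rangle$ in alternative (2) really is cyclic rather than of higher rank. This is exactly where Theorem \ref{exceptional_intersection} is indispensable as an input, and where the case analysis becomes delicate --- in particular the subcases in which $\pi_1(A)$ or $\pi_1(B)$ itself has exceptional intersection with one of the edge groups $M_{\pm}$, so that an extra generator is already present before conjugation is taken into account. Finally, for the imprimitive case, $\lambda$ imprimitive means $\pi_1(X)\isom\langle\Sigma\mid w^{n}\rangle$ with $n\geq 2$, so $\pi_1(X)$ has torsion; the exceptional-intersection phenomenon then does not occur (the last sentence of Theorem \ref{exceptional_intersection}), and since the torsion order is preserved along the Magnus hierarchy, re-running the induction with this sharper input eliminates alternative (2) altogether, leaving only the trivial intersection and the double-coset alternatives.
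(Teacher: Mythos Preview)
The paper does not prove this statement: it is quoted as the main result of \cite{collins_08} and used as a black box throughout. So there is no ``paper's own proof'' to compare against; your proposal is an outline of how one would reprove Collins' theorem from scratch.

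That said, your sketch is broadly faithful to Collins' actual strategy: induction along the Magnus--Moldavanskii hierarchy, reduction to a zero-exponent-sum generator, and a Bass--Serre/Britton's-lemma case analysis for the HNN-extension, with Theorem~\ref{exceptional_intersection} as the key input controlling the rank of the exceptional intersection. You correctly identify the delicate point --- ensuring that at most one extra cyclic generator appears across the induction --- and this is indeed where the bulk of Collins' work lies; your outline does not attempt that bookkeeping, so as a proof it is incomplete, but as a plan it is the right one. One small correction: your base case should be when the relator has length one (or the group is cyclic), not when $\pi_1(X)$ is a free product of cyclic groups, since the latter already requires knowing the relator is a proper power of a primitive, which is part of what the hierarchy establishes. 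For the purposes of this paper, however, you should simply cite Collins rather than reprove the result.
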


\begin{remark}
\label{torsion_remark}
Recall that the fundamental group of a one-relator complex $X = (\Gamma, \lambda)$ has torsion if and only if $\lambda$ is imprimitive by \cite{karrass_60}.
\end{remark}

We may, in some sense, strengthen Theorems \ref{exceptional_intersection} and \ref{conjugates_intersection} to incorporate intersections of subgroups of fundamental groups of Magnus subgraphs. First, we will need the following lemma.

\begin{lemma}
\label{Magnus_inert}
Let $X = (\Gamma, \lambda)$ be a one-relator complex and let $A, B\subset X$ be Magnus subgraphs with $A\cap B$ connected. If $\gamma:\Gamma\immerses A$ is a graph immersion such that $\gamma_*\pi_1(\Gamma) = \pi_1(A)\cap \pi_1(B)$, then $\gamma$ is a strongly inert graph immersion. In particular, $\pi_1(A)\cap \pi_1(B)$ is strongly inert in $\pi_1(A)$.
\end{lemma}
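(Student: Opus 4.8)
The plan is to reduce the statement, via Lemma~\ref{strongly_inert}, to an inertness property of $\pi_1(A)\cap\pi_1(B)$ inside the free group $\pi_1(A)$, and then to feed in Collins' classification of intersections of Magnus subgroups (Theorem~\ref{exceptional_intersection}). First I would unwind the definitions. Writing $H=\pi_1(A)\cap\pi_1(B)=\gamma_*\pi_1(\Gamma)$, the components of $\core(\Gamma\times_A\Lambda)$ correspond to the double cosets of $\pi_1(\Lambda)$ and $H$ in $\pi_1(A)$, and the sum of their reduced ranks equals $\sum_{[H\cap K^g]}\rr(H\cap K^g)$ with $K=\pi_1(\Lambda)$; since $-\chi(\core(\Lambda))=\rr(K)$, the condition that $\gamma$ be a strongly inert graph immersion is exactly the condition that $H$ be a strongly inert subgroup of $\pi_1(A)$. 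By Lemma~\ref{strongly_inert} it therefore suffices to show that $H*F$ is an inert subgroup of $\pi_1(A)*F$ for every free group $F$.

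Next I would absorb the free factor $F$ into the complex. Fix a basepoint in the connected subcomplex $A\cap B$ and let $\hat X$, $\hat A$, $\hat B$ be obtained from $X$, $A$, $B$ by wedging on $\rk(F)$ loops at this basepoint. Then $\hat X$ is again a one-relator complex, $\hat A$ and $\hat B$ are Magnus subcomplexes of it with $\hat A\cap\hat B$ connected, and comparing free-product normal forms in $\pi_1(\hat X)=\pi_1(X)*F$ gives $\pi_1(\hat A)\cap\pi_1(\hat B)=H*F$. Hence it suffices to prove the following statement for \emph{every} one-relator complex and every pair of Magnus subcomplexes $A,B$ with $A\cap B$ connected: $\pi_1(A)\cap\pi_1(B)$ is inert in $\pi_1(A)$. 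Since $(\pi_1(A)\cap\pi_1(B))\cap K=\pi_1(B)\cap K$ for every $K\le\pi_1(A)$, this is precisely the inequality $\rk(\pi_1(B)\cap K)\le\rk(K)$.

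Now I would split into cases according to Theorem~\ref{exceptional_intersection}. If the intersection is not exceptional, then $\pi_1(A)\cap\pi_1(B)=\pi_1(A\cap B)$; choosing a spanning tree of the graph underlying $X$ that restricts to spanning trees of $A$ and of $A\cap B$ exhibits $\pi_1(A\cap B)$ as generated by a subset of a basis of $\pi_1(A)$, hence as a free factor of $\pi_1(A)$ and so strongly inert, and there is nothing more to do. If the intersection is exceptional, then $\pi_1(A)\cap\pi_1(B)=\pi_1(A\cap B)*\langle c\rangle$ with $\pi_1(A\cap B)$ still a free factor of $\pi_1(A)$ by the same spanning-tree argument, so the remaining task is to show that this one-element extension of a free factor is inert in $\pi_1(A)$. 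Here I would invoke the explicit description of the exceptional case in \cite{collins_04}: up to replacing the attaching cycle of $X$ by a cyclic conjugate, $c$ may be taken to be the syllable of the relator carried by $A$ in a suitable factorisation of the relator into a word carried by $A$ followed by a word carried by $B$, so that $c$ is, up to conjugacy in $\pi_1(A)$, a short subword of the relator. With $c$ in this constrained form one verifies the inequality $\rk((\pi_1(A\cap B)*\langle c\rangle)\cap K)\le\rk(K)$ for all $K\le\pi_1(A)$ by a direct computation with Stallings graphs, the low-rank cases (where $\pi_1(A\cap B)*\langle c\rangle$ has rank at most two) being covered immediately by Tardos' theorem, and the general case being handled by exploiting that $c$ is, after conjugation, either very short or already carried by $A\cap B$.

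The reductions in the first two paragraphs are formal and the non-exceptional case is immediate, so the main obstacle is the exceptional case. The point is that an arbitrary one-element extension of a free factor of a free group need not be inert, so the verification must genuinely use the shape of $c$ supplied by \cite{collins_04}; extracting exactly the information needed — in particular controlling the situation when $\pi_1(A\cap B)$ has large rank — is where the real work lies.
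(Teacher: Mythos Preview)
Your reductions in the first two paragraphs are correct and essentially match the paper's strategy: both the paper and you reduce, via Lemma~\ref{strongly_inert}, to showing that $H*F$ is inert in $\pi_1(A)*F$ for all free $F$. Your trick of wedging on loops to absorb $F$ into the complex is a valid alternative to the paper's observation that the relevant property is stable under free products; either way one is left with proving that $\pi_1(A)\cap\pi_1(B)$ is inert in $\pi_1(A)$, and the non-exceptional case is indeed immediate since $\pi_1(A\cap B)$ is a free factor.

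The gap is entirely in the exceptional case, and you have essentially identified it yourself. Saying that ``one verifies the inequality by a direct computation with Stallings graphs'' is not a proof, and as you note, an arbitrary one-element extension of a free factor need \emph{not} be inert, so the argument cannot be purely formal. You have not actually extracted from Collins' description of $c$ the property that makes this work, and there is no indication that a case analysis on the ``shape'' of $c$ will terminate cleanly when $\pi_1(A\cap B)$ has large rank. Tardos only covers rank $\leq 2$, which handles almost nothing here.

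The paper bypasses this entirely with a single structural observation: by Theorem~\ref{exceptional_intersection} combined with \cite[Theorem~3.2]{rosenmann_13}, the subgroup $\pi_1(A)\cap\pi_1(B)$ is an \emph{echelon} subgroup of $\pi_1(A)$, in both the exceptional and non-exceptional cases. Echelon subgroups are stable under free products (so $H*F$ is echelon in $\pi_1(A)*F$), and echelon subgroups are known to be inert by \cite[Theorem~3.11]{abdenbi_17}. This is exactly the missing ingredient: rather than analysing $c$ directly, the point is that $\pi_1(A\cap B)*\langle c\rangle$ sits inside $\pi_1(A)$ in an echelon fashion, and inertness of echelon subgroups is an off-the-shelf result. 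You should replace your attempted Stallings-graph computation with this citation.
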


\begin{proof}
By Theorem \ref{exceptional_intersection}, $\pi_1(A)\cap \pi_1(B)$ is an echelon subgroup of $\pi_1(A)$ (see \cite[Definitions 3.1 \& 3.3]{rosenmann_13} for the definition of an echelon subgroup of a free group). If $F, G$ are free groups and $J< F$, $K<G$ are echelon subgroups, then by definition, $J*K<F*G$ is an echelon subgroup. Thus, Lemma \ref{strongly_inert}, combined with \cite[Theorem 3.6]{rosenmann_13} implies that $\gamma$ is a strongly inert graph immersion.
\end{proof}

\begin{theorem}
\label{subgroup_Magnus_intersection}
Let $X = (\Gamma, \lambda)$ be a one-relator complex and $A, B\subset X$ be Magnus subgraphs with $A\cap B$ connected. If $C\leqslant\pi_1(A)$ and $D\leqslant \pi_1(B)$ are finitely generated subgroups, with $C$ a strongly inert subgroup of $\pi_1(A)$, then the following is satisfied:
\[
\sum_{\substack{DgC\\ g\in \pi_1(X)}}\rr(C\cap D^g) = \sum_{\substack{DgC\\g\in \pi_1(B)\pi_1(A)}}\rr(C\cap D^g)\leq \rr(D)
\]
\end{theorem}

\begin{proof}
The first equality follows from Theorem \ref{conjugates_intersection}. Since $C\leqslant \pi_1(A)$ and $D\leqslant \pi_1(B)$, we have that
\begin{align*}
\sum_{\substack{DgC\\g\in \pi_1(B)\pi_1(A)}}\rr(C\cap D^g) &\leqslant \sum_{\substack{DbaC\\b\in \pi_1(B), a\in\pi_1(A)}}\rr\left(C^{a^{-1}}\cap D^b\right) \\
					&= \sum_{\substack{DbaC\\b\in \pi_1(B), a\in\pi_1(A)}}\rr\left(C^{a^{-1}}\cap A\cap B\cap D^b\right) \\
					&= \sum_{\substack{DbaC\\b\in \pi_1(B), a\in\pi_1(A)}}\rr\left(C\cap \left((A\cap B)\cap D^b\right)^a\right).
\end{align*}
Next, let $S$ be a set of double coset representatives for $D\backslash \pi_1(B)\pi_1(A)/ C$. For each $g\in S$, choose an element $b\in \pi_1(B)$ such that $b^{-1}g\in\pi_1(A)$. Denote by $S_B\subset\pi_1(B)$ the set of such elements. Denote by $S_b$ the set of distinct $a\in \pi_1(A)$ such that $ba\in S$. Each element in $S_b$ is in a distinct $((\pi_1(A)\cap \pi_1(B))\cap D^b)\backslash \pi_1(A)/C$ double coset. Each element in $S_B$ is in a distinct $D\backslash \pi_1(B)/ (\pi_1(A)\cap\pi_1(B))$ double coset. Then
\begin{align*}
\sum_{\substack{DgC\\g\in \pi_1(B)\pi_1(A)}}\rr\left(C\cap D^g\right) &= \sum_{b\in S_B}\sum_{a\in S_b}\rr\left(C\cap \left((\pi_1(A)\cap \pi_1(B))\cap D^b\right)^a\right)\\
					&\leqslant \sum_{b\in S_B}\rr\left((\pi_1(A)\cap \pi_1(B))\cap D^b\right)\\
					&\leqslant \rr(D)
\end{align*}
where the first inequality follows from the fact that $C$ is strongly inert in $\pi_1(A)$ and the second inequality follows from the fact that $\pi_1(A) \cap \pi_1(B)$ is strongly inert in $\pi_1(B)$ by Lemma \ref{Magnus_inert}.
\end{proof}

\section{Graphs of spaces}
\label{sec:prelim_2}

Let $\Gamma$ be a connected graph. Let $\{X_v\}_{v\in V(\Gamma)}$ and $\{X_e\}_{e\in E(\Gamma)}$ be collections of connected CW-complexes. We call these the \emph{vertex spaces} and \emph{edge spaces} respectively. Let $e\in E(\Gamma)$ and $o(e) = v_-$ and $t(e) = v_+$; then let $\partial^{\pm}_e:X_e\to X_{v_{\pm}}$ be $\pi_1$-injective combinatorial maps. This data determines a \textit{graph of spaces} $\mathcal{X} = (\Gamma, \{ X_v\}, \{X_e\}, \{\partial_e^{\pm}\})$. The \emph{geometric realisation} of $\mathcal{X}$ is defined as follows:
\[
X_{\mathcal{X}} = \left(\bigsqcup_{v\in V(\Gamma)}X_v\sqcup\bigsqcup_{e\in E(\Gamma)}(X_e\times [-1, 1])\right)/\sim
\]
with $(x, \pm 1)\sim \partial^{\pm}_e(x)$ for each $e\in E(\Gamma)$. This space has a CW-complex structure in the obvious way. We will say a cell $c\subset X_{\mathcal{X}}$ is \emph{horizontal} if its attaching map is supported in a vertex space, \emph{vertical} otherwise. 

There is a natural \emph{vertical map} 
\[
\mathsf{v}:X_{\mathcal{X}}\to \Gamma
\]
where $X_v$ maps to $v$ and $X_e\times (-1,1)$ maps to the open edge $e$ in the obvious way. The following fact about the vertical map $\mathsf{v}$ is well known, see \cite{serre_80}.

\begin{lemma}
\label{vertical_map}
The map:
\[
\mathsf{v}_*:\pi_1(X_{\mathcal{X}})\to \pi_1(\Gamma)
\]
is surjective.
\end{lemma}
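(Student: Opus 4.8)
The plan is to exhibit an explicit set-theoretic section of $\nu_*$ on generators of $\pi_1(\Gamma)$. Fix a basepoint $v_0 \in V(\Gamma)$, viewed inside the vertex space $X_{v_0}\subset X_{\mathcal{X}}$, and fix a spanning tree $T\subset \Gamma$. Recall that $\pi_1(\Gamma, v_0)$ is free on the set of edges $e\in E(\Gamma)\setminus E(T)$, where each such edge $e$ with $o(e)=v_-$, $t(e)=v_+$ contributes the loop $p_{v_-}\cdot e\cdot \bar{p}_{v_+}$, with $p_v$ the unique reduced path in $T$ from $v_0$ to $v$. So it suffices to realise each of these generating loops by a loop in $X_{\mathcal{X}}$ based at $v_0$, since $\pi_1(\Gamma)$ is generated by them.

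First I would lift the tree structure: for each vertex $v$, pick a path $\tilde{p}_v$ in $X_{\mathcal{X}}$ from $v_0$ to a chosen point of $X_v$, obtained by concatenating, along the edges of $T$ from $v_0$ to $v$, the segments $\{x_e\}\times[-1,1]$ of the mapping cylinders $X_e\times[-1,1]$ (here $x_e\in X_e$ is any point, and one travels through the vertex spaces along paths joining the relevant gluing points, which exist since each $X_v$ is connected). Then $\nu\circ \tilde{p}_v = p_v$ up to homotopy rel endpoints in $\Gamma$, because $\nu$ collapses each vertex space to a point and sends $\{x_e\}\times[-1,1]$ homeomorphically onto $e$. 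Next, for an edge $e\notin E(T)$ with endpoints $v_\pm$, form the loop $\tilde{\ell}_e = \tilde{p}_{v_-}\cdot (\{x_e\}\times[-1,1])\cdot \overline{\tilde{p}_{v_+}}$ in $X_{\mathcal{X}}$ based at $v_0$ (again inserting a short path inside $X_{v_\pm}$ to reconcile endpoints). By construction $\nu_*[\tilde{\ell}_e]$ is the free generator of $\pi_1(\Gamma)$ associated to $e$. Since these generators generate $\pi_1(\Gamma)$, the homomorphism $\nu_*$ is surjective.

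The only genuinely nontrivial point — and the one I would be careful about — is the bookkeeping needed to turn the vertical segments and the connecting paths inside vertex spaces into honest loops whose $\nu$-image is the prescribed loop in $\Gamma$; this is where connectedness of the vertex and edge spaces is used, and where one must check that $\nu$, which is a homotopy equivalence onto $\Gamma$ after collapsing each vertex space and is a quasi-fibration-type map on the edge cylinders, indeed sends the constructed loop to the intended element of $\pi_1(\Gamma)$. Everything else is formal. An alternative, essentially equivalent, route is to observe that $X_{\mathcal{X}}$ deformation retracts onto the union of $\Gamma$ (sitting inside as $\bigcup_v \{v\}$ together with the arcs $\{x_e\}\times[-1,1]$) with the vertex spaces collapsed, making $\nu$ a retraction up to homotopy and hence $\nu_*$ split surjective; but for the statement as given, surjectivity via the generator computation above is the shortest honest argument, and I would present that.
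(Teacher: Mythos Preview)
Your argument is correct: lifting a spanning tree of $\Gamma$ into $X_{\mathcal{X}}$ via paths through the mapping cylinders and vertex spaces, and then exhibiting explicit loops mapping to the free generators of $\pi_1(\Gamma)$, is exactly the standard way to see surjectivity of $\nu_*$. The connectedness of the vertex and edge spaces is precisely what makes the concatenations possible, and your check that $\nu$ sends the constructed loops to the intended generators is sound.

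As for comparison: the paper does not actually prove this lemma. It states it as a well-known fact and refers the reader to Serre's \emph{Trees}. Your write-up therefore supplies the details the paper omits. If anything, you could shorten it: the paragraph about the ``genuinely nontrivial point'' and the alternative deformation-retraction route are both correct but somewhat over-hedged for what is a routine construction. The core argument (first two paragraphs) already stands on its own.
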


We may also define a \emph{horizontal map}:
\[
\mathsf{h}:X_{\mathcal{X}}\to H_{\mathcal{X}}
\]
as the quotient map given by the transitive closure of the relation which, for each $e\in E(\Gamma)$, $i\in [-1,1]$ and $x\in X_e$, identifies $(x, i)$ with $\partial^+_e(x)$ and $\partial^-(x)$. A path $p:I\to X_{\mathcal{X}}$ is \emph{vertical} if $\mathsf{h}\circ p$ is a constant path and \emph{horizontal} if $\mathsf{v}\circ p$ is a constant path.

In general, not much can be said about the horizontal map. However, with sufficient restrictions on the edge maps, we can show that it is a homotopy equivalence.

\begin{proposition}
\label{homotopy_equivalence}
Let $\mathcal{X} = (\Gamma, \{X_v\}, \{X_e\}, \{\partial_e^{\pm}\})$ be a graph of spaces. Suppose that $\partial_e^{\pm}$ are given by inclusions of subcomplexes. Then the following hold:
\begin{enumerate}
\item $\mathsf{h}:X_{\mathcal{X}}\to H_{\mathcal{X}}$ is a homotopy equivalence if and only if $\mathsf{h}^{-1}(\mathsf{h}(c))$ is a tree for each $0$-cell $c\in X_{\mathcal{X}}^{(0)}$.
\item If $\mathsf{h}$ is a homotopy equivalence, then $H_{\mathcal{X}}$ has a CW-structure inherited from $X_{\mathcal{X}}$ and $\mathsf{h}\mid X_v$ is an immersion for all $v\in V(\Gamma)$.
\end{enumerate}
\end{proposition}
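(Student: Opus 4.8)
The plan is to analyze the horizontal map $h$ directly by comparing the cell structures of $X_{\mathcal{X}}$ and $H_{\mathcal{X}}$, and to use the vertical map $\nu$ to detect the obstruction. First I would set up the CW-structure: since $\partial_e^{\pm}$ are inclusions of subcomplexes, each product $X_e \times [-1,1]$ is a CW-complex glued to the vertex spaces along subcomplexes, so $X_{\mathcal{X}}$ is a genuine CW-complex and the horizontal map $h$ collapses each segment $\{x\}\times[-1,1]$ to a point. The subtlety is that $h$ need not be cellular or even well-behaved on cells: collapsing the $[-1,1]$ direction identifies the two copies $\partial_e^-(x)$ and $\partial_e^+(x)$ of the cell $x \subset X_e$, so in $H_{\mathcal{X}}$ the images of $X_{v_-}$ and $X_{v_+}$ are amalgamated along (the image of) $X_e$. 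I would first dispose of part (2) assuming $h$ is already a homotopy equivalence: since there are no vertical loops in $X^{(1)}_{\mathcal{X}}$, no edge space cell gets identified with a cell of the same vertex space via a vertical path, so the quotient $H_{\mathcal{X}}$ inherits a CW-structure in which each vertex space maps by a map that is injective on open cells — i.e. an immersion. Concretely, $h|_{X_v}$ fails to be an immersion exactly when two cells of $X_v$ are identified in $H_{\mathcal{X}}$, which would force a vertical loop in $X^{(1)}_{\mathcal{X}}$, contradicting the hypothesis.

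For part (1), the forward direction is the easy one: if $X^{(1)}_{\mathcal{X}}$ supports a vertical loop $p$, then $h \circ p$ is constant but $p$ is not null-homotopic in $X_{\mathcal{X}}$ — this follows because $\nu_* [p]$ records a nontrivial element of $\pi_1(\Gamma)$ (a vertical loop in the $1$-skeleton that is not a point must traverse edges of $\Gamma$ nontrivially, using Lemma \ref{vertical_map} and the structure of $\nu$), so $[p] \neq 1$ while $h_*[p] = 1$, hence $h_*$ is not injective and $h$ is not a homotopy equivalence. I would need to be slightly careful here that a vertical loop genuinely maps to a nontrivial loop in $\Gamma$: a vertical loop by definition has $h\circ p$ constant, meaning $p$ stays in a single fibre of $h$, but such a fibre projects under $\nu$ onto a loop in $\Gamma$, and a vertical loop in the $1$-skeleton that is essential must have nontrivial image.

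For the converse — no vertical loops implies $h$ is a homotopy equivalence — I would build a homotopy inverse, or better, exhibit $X_{\mathcal{X}}$ as obtained from $H_{\mathcal{X}}$ by a sequence of elementary expansions. The idea: choose a maximal tree $T$ in $\Gamma$; collapsing the edge spaces over $T$ is a deformation retraction (each $X_e \times [-1,1]$ for $e \in T$ deformation retracts onto $X_e \times \{-1\}$ and this is compatible), so up to homotopy we may assume $\Gamma$ is a wedge of circles. Then for each remaining edge $e$, the absence of vertical loops in the $1$-skeleton means that collapsing $X_e\times[-1,1]$ to $X_e$ and then to its image does not create any identification that destroys homotopy type — one shows the mapping cylinder structure makes $h$ a homotopy equivalence cell by cell, the no-vertical-loop condition guaranteeing that the two boundary inclusions $\partial_e^{\pm}$ land in "different" parts so that the collapse is again an expansion/collapse pair. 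The main obstacle I anticipate is precisely this converse: making rigorous that the combinatorial condition "no vertical loops in $X^{(1)}_{\mathcal{X}}$" is exactly what prevents the collapse $h$ from being non-injective on $\pi_1$ or from folding cells together in a way that changes the homotopy type. I would handle this by an induction on the number of edges of $\Gamma$ not in the maximal tree, reducing to the single-edge (HNN or amalgam) case, where the statement becomes the familiar fact that collapsing the $[-1,1]$-coordinate of $X_e\times[-1,1]$ is a homotopy equivalence provided the resulting identification of $X_e$'s two images does not introduce a new loop in the $1$-skeleton.
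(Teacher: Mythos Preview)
Your forward direction for (1) is fine and matches the paper: a vertical loop has nontrivial image under $\nu_*$ by Lemma~\ref{vertical_map}, so it survives in $\pi_1(X_{\mathcal{X}})$ while $h$ kills it. The gap is in the converse. Your plan is to contract a maximal tree of $\Gamma$ and then induct on the remaining edges, reducing to a single-edge base case which you call a ``familiar fact''. But that base case is the whole proposition when $\Gamma$ is a circle: the map $h$ identifies $\partial_e^-(x)$ with $\partial_e^+(x)$ for every $x\in X_e$, and these identifications cascade (if $\partial_e^+(x)=\partial_e^-(y)$ then $\partial_e^-(x)\sim\partial_e^+(y)$, etc.), so $h$ is not a simple mapping-cylinder collapse. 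The no-vertical-loop hypothesis controls only the fibres over $0$-cells; you give no argument that the fibres of $h$ over higher-dimensional cells are contractible, nor any mechanism for why collapsing them is a homotopy equivalence. Your induction is on the wrong parameter---the number of edges of $\Gamma$ does not touch the real difficulty.

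The paper's argument is structurally different. It shows that for every horizontal cell $c$ the fibre $h^{-1}(h(c))$ decomposes as a product $c\times\Gamma_c$, where $\Gamma_c$ is a graph immersing into $\Gamma$. The hypothesis says each $\Gamma_c$ is a tree when $c$ is a $0$-cell; since a cell $c$ whose boundary meets a cell $d$ gives an immersion $\Gamma_c\looparrowright\Gamma_d$, induction on dimension forces $\Gamma_c$ to be a tree for every cell. Then $h$ is factored as $h = \cdots\circ h_1\circ h_0$ where $h_i$ collapses each $c\times\Gamma_c$ to $c$ for horizontal $i$-cells $c$; each $h_i$ is a homotopy equivalence by the standard fact (recorded as a lemma, essentially \cite[Proposition~0.17]{hatcher_00}) that collapsing a subcomplex along a deformation retraction preserves homotopy type. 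Part (2) falls out of this dimension-by-dimension description.
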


\begin{proof}
Let $c\subset X_{\mathcal{X}}$ be an open horizontal cell and denote by $\Gamma_c = \mathsf{h}^{-1}(\mathsf{h}(c))$. If $\Gamma_c$ is not a tree for some $0$-cell $c$, then there is a loop in $\Gamma_c\subset X_{\mathcal{X}}$ that maps to a non-trivial loop in $\Gamma$ under the vertical map, but that maps to the trivial loop under $\mathsf{h}$. Thus, $\mathsf{h}$ cannot be a homotopy equivalence. So now let us suppose that $\Gamma_c$ is a tree for all $0$-cells $c\in X_{\mathcal{X}}^{(0)}$.

We show by induction on $n$ that for each horizontal $n$-cell $c\subset X_{\mathcal{X}}$, the preimage $\mathsf{h}^{-1}(\mathsf{h}(c))$ is homeomorphic to $c\times \Gamma_c$ where $\Gamma_c\subset X_{\mathcal{X}}$ is a tree containing only vertical edges and hence immersing into $\Gamma$ via the vertical map. The claim holds for $n = 0$ by the previous paragraph so now assume that $n\geqslant 1$ and the inductive hypothesis holds. If $d\subset X_{\mathcal{X}}$ is an open cell of dimension strictly less than $n$ and such that the image of the attaching map for $c$ intersects $d$, then the immersion $\Gamma_c\immerses\Gamma$ factors through the immersion $\Gamma_d\immerses\Gamma$. Indeed, if $c\subset X_v$ lies in the image of $\partial^{\pm}_e$, then so must $d\subset X_v$. Thus, the edge in $\Gamma_c$ corresponding to $c\times[-1,1]\subset X_e\times [-1,1]$ maps to the edge in $\Gamma_d$ corresponding to $d\times[-1,1]\subset X_e\times[-1,1]$. Here we are using the fact that $\partial^{\pm}_e$ is an inclusion of subcomplexes to naturally identify $c, d\subset X_v$ with the corresponding cells in $X_e$. Since $\Gamma_d$ is a tree by assumption, $\Gamma_c$ must also be a tree and, in particular, $\Gamma_c$ can be seen as a subtree of $\Gamma_d$.

We may now define the cellular structure on $H_{\mathcal{X}}$. Define an equivalence relation on the horizontal cells of $X_{\mathcal{X}}$ defined by $c_1\sim c_2$ if $c_1\subset \mathsf{h}^{-1}(\mathsf{h}(c_2))$ and $c_2\subset \mathsf{h}^{-1}(\mathsf{h}(c_1))$. By the previous paragraph, each open cell $c\in [c]$ maps homeomorphically to its image via $\mathsf{h}$ and each cell in a given equivalence class has the same image. Choosing a representative $c$ for each equivalence class $[c]$, there is an open $n$-cell $\mathsf{h}(c)\subset H_{\mathcal{X}}$ with attaching map given by composing the attaching map for $c$ with the horizontal map $\mathsf{h}$. Since each preimage of an open $n$-cell in $H_{\mathcal{X}}$ is contractible, a standard result now implies that $\mathsf{h}$ is a homotopy equivalence (for instance, see the proof of \cite[Proposition 0.17]{hatcher_00}).

With this cellular structure on $H_{\mathcal{X}}$, we now show that $\mathsf{h}\mid X_v$ is an immersion for each $v\in V(\Gamma)$. It is clearly an immersion on $0$-skeleta. If it is an immersion on $n$-skeleta, but not on $(n+1)$-skeleta, let $x\in X_v$ be a point at which $\mathsf{h}\mid X_v$ is not an immersion on the $(n+1)$-skeleton. Since $\mathsf{h}$ restricts to homeomorphisms on each open cell in $X_v$, we must have that $x$ lies in some open cell $c$ of dimension $n$ or less. Since $\mathsf{h}\mid X_v$ is an immersion on the $n$-skeleton, there are two distinct open $(n+1)$-cells $c_1, c_2\subset X_v$ that lie in the same equivalence class and such that the image of their attaching maps intersect $c$. By definition of the tree $\Gamma_c$, this would then yield a non-trivial loop in $\Gamma_c$ which is a contradiction. Hence $\mathsf\mid X_v$ is an immersion for all $v\in V(\Gamma)$.
\end{proof}

If $\mathcal{X}$ is a graph of spaces, then the universal cover $\tilde{X}_{\mathcal{X}}\to X_{\mathcal{X}}$ also has a graph of spaces structure where each vertex space is the universal cover of some vertex space of $\mathcal{X}$ and each edge space is the universal cover of some edge space of $\mathcal{X}$. We will denote this by $\tilde{\mathcal{X}} = (T, \{\tilde{X}_v\}, \{\tilde{X}_e\}, \{\tilde{\partial}^{\pm}_e\})$ where $T$ is the \emph{Bass--Serre tree of $\mathcal{X}$}. There is a natural covering action of $\pi_1(X_{\mathcal{X}})$ on $\tilde{X}_{\mathcal{X}}$ which pushes forward to an action on $T$. Indeed, we have the following $\pi_1(X_{\mathcal{X}})$-equivariant commuting diagram:
\[
\begin{tikzcd}
\tilde{X}_{\mathcal{X}} = X_{\tilde{\mathcal{X}}} \arrow[r] \arrow[d] & T \arrow[d] \\
X_{\mathcal{X}} \arrow[r]                                             & \Gamma     
\end{tikzcd}
\]

If $\tilde{X}$ satisfies the hypothesis of Proposition \ref{homotopy_equivalence}, then we also have a $\pi_1(X_{\mathcal{X}})$-equivariant commuting diagram:
\[
\begin{tikzcd}
\tilde{H}_{\mathcal{X}} = H_{\tilde{\mathcal{X}}} \arrow[d] & \tilde{X}_{\mathcal{X}} \arrow[r] \arrow[d] \arrow[l] & T \arrow[d] \\
H_{\mathcal{X}}                                             & X_{\mathcal{X}} \arrow[r] \arrow[l]                   & \Gamma     
\end{tikzcd}
\]

\subsection{Hyperbolic graphs of spaces}

Acylindrical actions were first defined by Sela in \cite{sela_97}.

\begin{definition}
Let $G$ be a group acting on a tree $T$. The action is \emph{$k$-acylindrical} if every subset of $T$ of diameter at least $k$ is pointwise stabilised by at most finitely many elements. We will say the action is \emph{acylindrical} if there exists some constant $k>0$ such that the action is $k$-acylindrical.
\end{definition}

By putting constraints on the geometry of the vertex and edge spaces of a graph of spaces $\mathcal{X}$, we may deduce acylindricity of the action of $\pi_1(X_{\mathcal{X}})$ on its Bass--Serre tree. The notion of hyperbolicity that we use is that of Gromov's $\delta$-hyperbolicity. One direction of the following theorem is due to Bestvina--Feighn \cite{bestvina_92_combination} (for the hyperbolicity statement) and Kapovich \cite[Theorem 1.2]{kapovich_01} (for the quasi-convexity statement), while the other direction follows from the fact that quasi-convex subgroups of hyperbolic groups have finite height, due to Gitik--Mitra--Rips--Sageev \cite{gitik_98}.

\begin{theorem}
\label{kapo_main}
Let $\mathcal{X} = (\Gamma, \{X_v\}, \{X_e\}, \{\partial_e^{\pm}\})$ be a graph of spaces such that $X_{\mathcal{X}}$ is compact, $\tilde{X}_v$ is hyperbolic for all $v\in V(\Gamma)$ and $\tilde{\partial}_e^{\pm}$ is a quasi-isometric embedding for all $e\in E(\Gamma)$. Then $\pi_1(X_{\mathcal{X}})$ acts acylindrically on the Bass-Serre tree $T$, if and only if $\tilde{X}_{\mathcal{X}}^{(1)}$ is hyperbolic with the path metric and $\tilde{X}^{(1)}_v\to \tilde{X}^{(1)}_{\mathcal{X}}$ (or $\tilde{X}^{(1)}_e\to \tilde{X}^{(1)}_{\mathcal{X}}$) is a quasi-isometric embedding for all $v\in E(\Gamma)$ (or for all $e\in E(\Gamma)$).
\end{theorem}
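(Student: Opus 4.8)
The statement to prove is Theorem \ref{kapo_main}, which packages together the quoted result \cite[Theorem 1.2]{kapovich_01} with Proposition \ref{kapo_converse}. I would prove the two implications separately, exploiting the symmetry of the hypotheses (hyperbolic vertex spaces, quasi-isometrically embedded edge maps) which are fixed throughout. For the forward direction I would simply invoke Proposition \ref{kapo_converse} after checking its hypotheses follow from the right-hand side of the biconditional: if $\tilde{X}^{(1)}_{\mathcal{X}}$ is hyperbolic and each $\tilde{X}^{(1)}_v\to \tilde{X}^{(1)}_{\mathcal{X}}$ is a quasi-isometric embedding, then since each $\partial^{\pm}_e$ is a quasi-isometric embedding of $\tilde{X}_e$ into some $\tilde{X}_v$, the composition $\tilde{X}^{(1)}_e\to \tilde{X}^{(1)}_v\to \tilde{X}^{(1)}_{\mathcal{X}}$ is again a quasi-isometric embedding; Proposition \ref{kapo_converse} then yields acylindricity of the action on $T$. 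For the reverse direction I would cite \cite[Theorem 1.2]{kapovich_01} directly: under the standing hypotheses, acylindricity of the action on the Bass--Serre tree is precisely the hypothesis of that theorem (the ``$k$-acylindrical'' or ``annuli flare'' condition used there), and its conclusion gives hyperbolicity of $\tilde{X}^{(1)}_{\mathcal{X}}$ together with the quasi-isometric embedding of each vertex space into it.

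The key steps, in order, are: (1) state the standing hypotheses and fix notation for the hyperbolicity and quasi-isometry constants; (2) prove the implication ``right-hand side $\Rightarrow$ acylindrical'' by verifying the edge-space quasi-isometric embedding hypothesis of Proposition \ref{kapo_converse} via composition with the vertex embeddings, then applying that proposition; (3) prove the converse ``acylindrical $\Rightarrow$ right-hand side'' by translating the statement into the language of \cite{kapovich_01} and quoting Theorem 1.2 there; (4) observe that combining (2) and (3) with the fact that vertex-space embeddings are automatically edge-space embeddings closes the loop.

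**Main obstacle.** The only genuine subtlety is bookkeeping around which ambient one-skeleton the edge and vertex spaces are quasi-isometrically embedded in, and ensuring the constants compose correctly: Proposition \ref{kapo_converse} is phrased with edge spaces embedded in $\tilde{X}^{(1)}_{\mathcal{X}}$, whereas the biconditional in Theorem \ref{kapo_main} is phrased with vertex spaces embedded in $\tilde{X}^{(1)}_{\mathcal{X}}$ and edge maps embedded in vertex spaces. Reconciling these — in particular confirming that a quasi-isometric embedding followed by a quasi-isometric embedding is a quasi-isometric embedding, with the hyperbolicity of the target guaranteeing no pathology — is routine but must be stated carefully. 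A secondary point is checking that the acylindricity hypothesis used in \cite{kapovich_01} really is the same acylindricity as in the Definition preceding Proposition \ref{kapo_converse}; since Kapovich--Weidmann's hypothesis can be stated in several equivalent forms for hyperbolic vertex groups, I would cite the relevant equivalence rather than reprove it. No hard new argument is required beyond what is already assembled in Proposition \ref{kapo_converse}.
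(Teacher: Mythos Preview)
Your proposal is correct and matches the paper's approach exactly: the paper simply states that Theorem \ref{kapo_main} is obtained by ``putting the two together,'' meaning Proposition \ref{kapo_converse} for one implication and \cite[Theorem 1.2]{kapovich_01} for the other. Your additional bookkeeping about composing the edge maps $\partial_e^{\pm}$ with the vertex-space embeddings to verify the hypothesis of Proposition \ref{kapo_converse} is the one detail the paper leaves implicit, and you have it right.
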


\section{$\Z$-domains and one-relator hierarchies}
\label{sec:zdomain}

Let $X$ be a CW-complex. A $\Z$-cover is a connected covering map $p\colon Y\to X$ with $\deck(p)\isom \Z$. We now define $\Z$-domains. These are subcomplexes of $\Z$-covers of $X$ that allow us to construct a homotopy equivalence between $X$ and a graph of spaces in certain situations. We will prove that $\Z$-domains always exist for finite CW-complexes and use this, in combination with the Freiheitssatz, to establish our hierarchy result for one-relator groups.

\begin{definition}
Let $p\colon Y\to X$ be a $\Z$-cover of CW-complexes and denote by $t\in \Z$ a generator. A \emph{$\Z$-domain} for $p$ is a subcomplex $D\subset Y$ with the following properties:
\begin{enumerate}
\item\label{cdt1} $\Z\cdot D = Y$.
\item\label{cdt2} $D\cap t^i\cdot D\subset D\cap t\cdot D\cap \ldots\cap t^i\cdot D$ for all $i>0$.
\item\label{cdt3} $D\cap t\cdot D$ is connected and non-empty.
\end{enumerate}
We will denote by $\zd(p)$ the set of all $\Z$-domains. A \emph{minimal $\Z$-domain} is a $\Z$-domain, minimal under the partial order of inclusion.
\end{definition}

\begin{example}
Let $S_{g}$ be the orientable surface of genus $g\geq 1$ and $\gamma:S^1\to S_g$ a non-separating simple closed curve. We may give $S_g$ a CW structure so that $\gamma$ is in the one-skeleton. The curve $\gamma$ determines an epimorphism $\pi_1(S_g)\to \Z$ via the intersection form. Let $p:Y\to S_g$ be the induced cyclic cover. Consider the subspace $Z\subset Y$ obtained by taking the closure of some component of $p^{-1}(S_g - \Ima(\gamma))$. Its translates cover $Y$ and intersect in lifts of $\gamma$ and so $Z$ is a (minimal) $\Z$-domain for $p$.
\end{example}

Now let $p:Y\to X$ be a $\Z$-cover, $t\in \Z\isom \deck(p)$ a generator and $D\subset Y$ a $\Z$-domain. Consider the following families of spaces:
\[
\{D_i\}_{i\in \Z}
\]
\[
\{E_{i}\}_{i\in\Z}
\]
where $D_i\isom t^i\cdot D$ and $E_{i}\isom t^i\cdot(D\cap t\cdot D)$. There are natural inclusion maps:
\[
\iota_{i}^{-}\colon E_{i}\to D_i
\]
and
\[
\iota_{i}^+\colon E_i\to D_{i+1}
\]
given by the inclusions $t^i\cdot(D\cap t\cdot D)\to t^i\cdot D$ and $t^i\cdot(D\cap t\cdot D)\to t^{i+1}\cdot D$. With this data we define the space:
\[
\Theta(p, D) = \left(\bigsqcup_{i\in \Z}D_i\right)\sqcup\left(\bigsqcup_{i\in\Z}E_{i}\times[-1, 1]\right)/\sim
\]
where $(x, \pm1)\sim \iota_{i}^{\pm}(x)$ for all $i\in \Z$ and $x\in E_{i}$. We also have an action of $\Z$ on $\Theta(p, D)$ induced by the action on $\{D_i\}_{i\in \Z}$.

\begin{proposition}
\label{splitting}
Let $p\colon Y\to X$ be a $\Z$-cover with $t\in \deck(p)$ a generator and let $D\subset Y$ be a $\Z$-domain. If $D\cap t\cdot D\to D, t\cdot D$ are $\pi_1$-injective, then $\Theta(p, D)$ and $\Z\backslash \Theta(p, D)$ are graphs of spaces and the induced maps
\begin{align*}
\Theta(p, D)&\to Y\\
\Z\backslash\Theta(p, D)&\to X
\end{align*}
are homotopy equivalences factoring as horizontal maps composed with homeomorphisms. In particular, $\pi_1(X)$ splits as a HNN-extension with vertex group $\pi_1(D)$ and edge group $\pi_1(D\cap t\cdot D)$.
\end{proposition}

\begin{proof}
Condition (\ref{cdt3}) of $\Z$-domains combined with $\pi_1$-injectivity of $D\cap t\cdot D\to D, t\cdot D$ implies that $\Theta(p, D)$ and $\Z\backslash \Theta(p, D)$ are graphs of spaces. Condition (\ref{cdt1}) implies that the maps $\Theta(p, D)\to Y$ and $\Z\backslash \Theta(p, D)\to X$ are surjective and the definition of the horizontal map $\mathsf{h}$ shows that they factor through the horizontal maps. The preimages of points in $Y$ under the map $\Theta(p, D)\to Y$ are collections of intervals which can be identified with subintervals of $\R$, the underlying graph of the graph of spaces $\mathcal{Y}$ for $\Theta(p, D) = X_{\mathcal{Y}}$. The horizontal map $\Theta(p, D) = X_{\mathcal{Y}}\to H_{\mathcal{Y}}$ is the quotient map defined by identifying each connected component of these preimages to points. Condition (\ref{cdt2}) of $\Z$-domains implies these preimages are actually connected. Hence, the factored map $H_{\mathcal{Y}}\to Y$ is actually a homeomorphism. Since $\Z$ acts freely on $Y$, distinct points in a single point preimage in $\Theta(p, D)$ lie in distinct $\Z$-orbits and so it follows from the commutativity of the diagram
\[
\begin{tikzcd}
{\Theta(p, D)} \arrow[r] \arrow[d] & {\mathbb{Z}\backslash \Theta(p, D)} \arrow[d] \\
Y \arrow[r]                        & X                                            
\end{tikzcd}
\]
that preimages of points in $X$ under the map $\Z\backslash\Theta(p, D)\to X$ are also connected subintervals of $\R$. As before, we obtain that the map $\Z\backslash \Theta(p, D) \to X$ factors as the horizontal map composed with a homeomorphism. By Proposition \ref{homotopy_equivalence}, both horizontal maps are homotopy equivalences and so the induced maps are also homotopy equivalences.
\end{proof}

\subsection{Existence of $\Z$-domains}
\label{existence_section}

Let us first fix some notation. Let $p\colon Y\to X$ be a $\Z$-cover with $t$ a generator for $\deck(p)$. Choose some spanning tree $T\subset X^{(1)}$ and some orientation on $X^{(1)}$. This induces an identification of $\pi_1\left(X^{(1)}\right)$ with $F(\Sigma)$, the free group generated by
\[
\Sigma = \{e\}_{e\in E(X^{(1)})\setminus E(T)}.
\]
For any subset $A\subset \Sigma$, define:
\[
T^A = T\cup\left(\bigcup_{e\in A}e\right).
\]
Choose some lift of $T$ to $Y$ and denote this by $T_0\subset Y$. Then, since $p$ is regular, every lift of $T$ is obtained by translating $T_{0}$ by an element of $\Z$. So we will denote by:
\[
T_{i} = t^i\cdot T_{0}.
\]
For each $e\in E(X^{(1)})$, denote by $e_i$ the lift of $e$ such that $o(e_i)\in T_i$. If $e\notin \Sigma$, then $t(e_i)\in T_i$. If $e\in \Sigma$, then $t(e_i)\in T_{i + \iota(e)}$ where 
\[
\iota:\Sigma\to \pi_1(X^{(1)}, x)\to \Z
\]
is the obvious map.

If $e\in \Sigma$ and $C\subset \Z$, define $C_e\subset C$ to be the subset consisting of the elements $i\in C$ such that $i + \iota(e)\in C$. Then if $A\subset \Sigma$, we define the following subcomplex of the 1-skeleton of $Y$:
\[
T_C^A = \left(\bigcup_{i\in C}T_i\right)\cup\left(\bigcup_{e\in A}\bigcup_{i\in C_e}e_i\right)
\]
Examples of such subcomplexes can be seen in Figures \ref{cyclic_tree} and \ref{minimal_cyclic_tree}. We also write
\begin{align*}
K_A &= \langle \iota(A)\rangle < \Z\\
k_A &= [\Z:K_A].
\end{align*}

\begin{remark}
\label{connected_remark}
The subgroup $K_A$ acts on each component of $T^{A}_{\Z}$ and the quantity $k_A$ is precisely the number of connected components of $T_{\Z}^A$. In fact, if we contract all the lifts of $T$ in $Y^{(1)}$, then the resulting graph is the Cayley graph for $\Z$ over the generating set $\iota(\Sigma)$. So the connected components of $T_{\Z}^A$ correspond to $K_A$ cosets in $\Z$.
\end{remark}

We will call a subset $C\subset \Z$ \emph{connected} if for all triples of integers $i\leqslant j\leqslant k$, if $i, k\in C$ then $j\in C$.

\begin{proposition}
\label{connectedness}
Let $p:Y\to X$ be a $\Z$-cover. For any subset $A\subset \Sigma$ and any connected subset $C\subset\mathbb{Z}$, the following hold:
\begin{enumerate}
\item\label{itm:kA_infty} If $k_A = \infty$, then $T_C^A$ consists of $|C|$ connected components.
\item\label{itm:kA_finite} If $k_A<\infty$, then there exists some constant $k = k(A)\geq 0$ such that $T_C^A$ consists of $k_A$ connected components whenever $|C|\geq k$.
\end{enumerate}
\end{proposition}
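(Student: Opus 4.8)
The plan is to reduce the statement, by collapsing the trees $T_i$, to an elementary count of connected components of a finite graph on the vertex set $C$, and then to run an induction on $\abs{A}$.

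First I would record a few harmless reductions. Since $C$ is connected it is a finite interval $\{a,a+1,\dots,b\}$. Next, although $A$ may be infinite, the set $\iota(A)\subseteq\iota(\Sigma)$ is finite; as both the subset $C_e$ and the two endpoints $T_i$, $T_{i+\iota(e)}$ of the lift $e_i$ depend only on $\iota(e)$, replacing $A$ by any finite $A_0$ with $\iota(A_0)=\iota(A)$ alters $T_C^A$ only by parallel edges and leaves $K_A$ unchanged, so I may assume $A$ is finite. Finally, an edge $e$ with $\iota(e)=0$ contributes only arcs from $T_i$ to itself, affecting neither the components of $T_C^A$ nor $K_A$, so I may discard these and assume $\iota(e)\neq0$ for all $e\in A$. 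Now collapsing each (tree) $T_i$ with $i\in C$ to a point preserves the number of connected components and turns $T_C^A$ into a finite graph $\Gamma$ with vertex set $C$ and, for each $e\in A$ and each $i\in C_e$, an edge joining $i$ to $i+\iota(e)$. It then suffices to count the components of $\Gamma$.

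After these reductions, $k_A=\infty$ forces $A=\emptyset$ (since $K_A=\langle\iota(A)\rangle\neq 0$ otherwise), whence $\Gamma$ has no edges and $\abs{C}$ components; this is part \ref{itm:kA_infty}. For part \ref{itm:kA_finite} write $d=k_A=\gcd\{\,\abs{\iota(e)}:e\in A\,\}<\infty$ and induct on $\abs{A}\geq 1$. When $\abs{A}=1$, say $A=\{e\}$ with $n=\abs{\iota(e)}$, the graph $\Gamma$ is a disjoint union of paths, one for each residue class mod $n$ meeting $C$ — consecutive vertices of a path differ by $n$, and the connecting edge is present because its far endpoint also lies in $C$ — so $\Gamma$ has exactly $n=d$ components once $\abs{C}\geq n$, and I take $k(A)=n$. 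For the inductive step I write $A=A'\sqcup\{e\}$ with $\abs{A'}\geq 1$ and $d'=k_{A'}<\infty$. By induction there is $k(A')$ so that for $\abs{C}\geq k(A')$ the analogous graph $\Gamma'$ for $A'$ has exactly $d'$ components; since the residue mod $d'$ is constant on components of $\Gamma'$ and all $d'$ classes occur once $\abs{C}\geq d'$, these components are precisely the residue classes $P_0,\dots,P_{d'-1}$ mod $d'$. Taking $\abs{C}\geq k(A):=\max\{k(A'),\,d'+\abs{\iota(e)}\}$ ensures $\abs{C_e}\geq d'$, so $C_e$ meets every class mod $d'$; hence adding the $e$-edges to $\Gamma'$ identifies $P_j$ with $P_{j+\iota(e)\bmod d'}$ for every $j$ and introduces no other identifications, so the components of $\Gamma$ are in bijection with the orbits of $j\mapsto j+\iota(e)$ on $\Z/d'$. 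There are $\gcd(d',\iota(e))$ such orbits, and since $K_{A'}=d'\Z$ gives $K_A=\gcd(d',\iota(e))\,\Z$, this equals $k_A$, completing the induction.

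The one genuinely delicate point is the claim inside the inductive step that for large $\abs{C}$ the components of $\Gamma'$ are not merely $d'$ in number but are exactly the full residue classes mod $d'$; this is what makes the effect of the new generator transparent, reducing the count to the orbit structure of a translation on a finite cyclic group. Everything else — the tree collapses, the parallel‑edge reduction, and the base case — is routine, and the hypothesis $\abs{\iota(\Sigma)}<\infty$ is used exactly to make $\iota(A)$ finite so that the induction on $\abs{A}$ is available.
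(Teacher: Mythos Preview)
Your proof is correct and follows the same overall strategy as the paper's: induct on the number of (values taken by) generators in $A$, peeling off one at a time. Your execution is actually cleaner in two respects. First, collapsing each tree $T_i$ to a point reduces the question to a transparent finite graph on the vertex set $C$, which the paper leaves implicit. Second, your observation that for $|C|\geq k(A')$ the components of $\Gamma'$ are not merely $d'$ in number but are \emph{exactly} the residue classes mod $d'$ (by the pigeonhole: $d'$ components, residue constant on each, $d'$ classes represented) turns the inductive step into a clean orbit count for the translation $j\mapsto j+\iota(e)$ on $\Z/d'$. The paper instead argues that adding the new $e$-edges one by one decreases the component count until the minimum $k_A$ is reached, relying on Remark~\ref{connected_remark}; your argument makes the mechanism explicit and yields the bound $k(A)=\max\{k(A'),\,d'+|\iota(e)|\}$ directly.

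One small quibble: you open by asserting that $C$ is a finite interval, but connected subsets of $\Z$ may be half-lines or all of $\Z$. This is harmless---the infinite case follows immediately from the finite one, or by the same argument verbatim---but you should say so rather than silently exclude it.
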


\begin{proof}
For (\ref{itm:kA_infty}), note that we have that $K_A = \{0\}$, hence each edge $e_j$ with $e\in A$ has both endpoints in $T_j$. So for each $j\in C$, the subcomplexes $T_{j}^A\subset T_C^A$ are all pairwise disjoint and cover $T_C^A$.

Now assume that $k_A<\infty$. Let $A'\subset A$ be any finite subset such that $\gcd(\iota(A')) = \gcd(\iota(A)) = k_A$. For any pair of connected subsets $C\subset D\subset \Z$ of size larger than the largest element of $\iota(A')$ in absolute value, the inclusion $T_C^{A'}\injects T_D^{A'}$ is surjective on components. Hence, for large enough $C$, the number of components must stabilise at $k_A$ by Remark \ref{connected_remark}. By definition of $T_C^A$, we also have that $T_C^A$ has $k_A$ connected components for all connected $C\subset \Z$ such that $|C|\geqslant k = k(A')$. This establishes (\ref{itm:kA_finite}).
\end{proof}

\begin{proposition}
\label{tree_existence}
Every $\Z$-cover of a finite CW-complex has a finite $\Z$-domain.
\end{proposition}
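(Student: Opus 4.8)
The plan is to assemble a finite tree domain directly out of the subcomplexes $T_C^{\Sigma}$ of Proposition \ref{connectedness}, taking $A=\Sigma$. Write $p:Y\to X$ for the given cyclic cover; we may assume $Y$ (hence $X$) is connected. Then $Y^{(1)} = T_{\Z}^{\Sigma}$ is connected, so $k_{\Sigma} = 1 < \infty$ by Remark \ref{connected_remark}, and we may fix the threshold $k = k(\Sigma)$ provided by Proposition \ref{connectedness}(\ref{itm:kA_finite}), so that $T_C^{\Sigma}$ is connected for every connected $C\subset\Z$ with $\abs{C}\geq k$.

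First I would use finiteness of $X$ to produce a finite subcomplex $Y_0\subset Y$ with $\Z\cdot Y_0 = Y$: choose one lift in $Y$ of each open cell of $X$ and let $Y_0$ be the subcomplex they generate; this is finite by closure-finiteness of CW-complexes, and $\Z\cdot Y_0 = Y$ since $\Z$ permutes the lifts of each cell of $X$ transitively. After applying a deck transformation we may assume $Y_0^{(1)}\subset T_{[0,b]}^{\Sigma}$ for some $b\geq 0$, as $Y_0^{(1)}$ is a finite subgraph of $Y^{(1)}$. For each cell $c$ of $X$ of dimension at least $2$ fix a lift $\tilde c$ and let $d_c$ be the difference between the largest and the smallest $g\in\Z$ with $g\cdot\tilde c\subset Y_0$; this is a finite nonnegative integer, and we put $b' = b + \max_c d_c$.

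Now let $L\geq\max\{k,b'\}$, put $C=\{0,1,\dots,L\}$, and define
\[
D \;=\; T_C^{\Sigma}\;\cup\;\bigcup_{j=0}^{L-b}j\cdot\bigl(Y_0\setminus Y_0^{(1)}\bigr).
\]
Since $j\cdot Y_0^{(1)}\subset T_{[j, j+b]}^{\Sigma}\subset T_C^{\Sigma}$ for $0\leq j\leq L-b$, this $D$ is a finite subcomplex of $Y$ with $D^{(1)} = T_C^{\Sigma}$, and $D\supset Y_0$, so $\Z\cdot D = Y$, which is the first tree-domain axiom. For the $\Z$-connectedness axiom --- here $S = \{1\}$, so freely reduced words over $S\sqcup S^{-1}$ are integer powers of the generator --- I would observe it is equivalent to requiring, for each cell $c$ of $X$, that the set $I_c$ of integers $g$ with $g\cdot(\text{the chosen lift of }c)\subset D$ is an interval of $\Z$; this holds because $I_c$ equals $C$ for $0$-cells and for tree edges, equals $C_e$ (readily an interval) for $e\in\Sigma$, and for cells of dimension at least $2$ equals the union over $0\leq j\leq L-b$ of the $j$-translates of a fixed finite set of diameter $d_c\leq b'-b\leq L-b$, hence again an interval. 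Finally, a direct computation of the index sets above gives $(D\cap 1\cdot D)^{(1)} = T_{[1,L]}^{\Sigma}$, which is connected and non-empty since $\abs{[1,L]} = L\geq k$; as a CW-complex is connected if and only if its $1$-skeleton is, $D\cap 1\cdot D$ is connected and non-empty, which is the last axiom. Thus $D$ is a finite tree domain for $p$.

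All of the substantive content sits in Proposition \ref{connectedness}, which is already available; the only delicate point here is the $\Z$-connectedness axiom, i.e.\ forcing the set of translates of each cell of $X$ that survive in $D$ to be a single interval. This is what dictates taking $C$ long enough both to make $T_C^{\Sigma}$ and $T_{[1,L]}^{\Sigma}$ connected and to merge the finitely many lifts of each positive-dimensional cell of $X$ lying inside $Y_0$, and it is also where one checks that $C_e$ is an interval for the extra generators $e\in\Sigma$.
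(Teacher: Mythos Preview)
Your proof is correct and takes essentially the same approach as the paper: both use Proposition~\ref{connectedness} with $A=\Sigma$ to get a tree domain for the $1$-skeleton, then enlarge to accommodate the higher cells. The paper organises this as an induction on skeleta (enlarging $D_{n-1}$ until it supports a lift of each $n$-cell's attaching map and then taking the full subcomplex), whereas you build $D$ in one stroke from $T_C^\Sigma$ and translates of a fixed finite $Y_0$. One place where you are more careful than the paper is the verification of $\{1\}$-connectedness: your observation that $J_c+[0,L-b]$ is an interval once $L-b\ge d_c$ (because $[\min J_c,\min J_c + (L-b)]$ and $[\max J_c,\max J_c+(L-b)]$ then overlap and together exhaust $[\min J_c,\max J_c+(L-b)]$) is exactly the point the paper's inductive step leaves implicit.
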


\begin{proof}
Let $X$ be a finite CW-complex and $p:Y\to X$ a $\Z$-cover. We prove this by induction on $n$-skeleta of $X$. Denote by $p_n:Y^{(n)}\to X^{(n)}$ the restriction of $p$ to the $n$-skeleton of $Y$. Since $X$ is finite, $|\iota(\Sigma)|$ is finite. As $k_{\Sigma} = 1$, we may apply Proposition \ref{connectedness} and obtain an integer $k(\Sigma)\geq 0$ such that $T_C^{\Sigma}$ is connected for all $|C|\geq k$. So if we choose $C$ so that $|C|$ is greater than $k$ and greater than the maximal size of an element in $\iota(\Sigma)$ in absolute value, then $T_C^{\Sigma}$ satisfies conditions (\ref{cdt1}) and (\ref{cdt2}) of the definition of $\Z$-domains. By possibly enlarging $C$ by one element, we may ensure that (\ref{cdt3}) is also satisfied and hence that $T_C^{\Sigma}$ is a $\Z$-domain for $p_1:Y^{(1)}\to X^{(1)}$.

Now suppose we have a $\Z$-domain $D_{n-1}$ for $p_{n-1}:Y^{(n-1)}\to X^{(n-1)}$. Then for any connected subset $C\subset \Z$, the subcomplex $\bigcup_{j\in C}t^j\cdot D_{n-1}$ is also a $\Z$-domain for $p_{n-1}$. The first and last properties are clear, whereas the second property holds because 
\[
\left(\bigcup_{j\in C}t^j\cdot D_{n-1}\right)\cap t^i\cdot \left(\bigcup_{j\in C}t^j\cdot D_{n-1}\right) = \bigcup_{m + i\leqslant j\leqslant M}t^j\cdot D_{n-1}
\]
if $i\leqslant |C|$, where $m = \min{\{C\}}$ and $M = \min{\{C\}}$, and 
\begin{align*}
\left(\bigcup_{j\in C}t^j\cdot D_{n-1}\right)\cap t^i\cdot \left(\bigcup_{j\in C}t^j\cdot D_{n-1}\right) &= t^{M}\cdot D_{n-1}\cap t^i\cdot D_{n-1}\\
				&\subseteq t^{M}\cdot D_{n-1}\cap D_{n-1}^{M+1}\cap \ldots\cap t^i\cdot D_{n-1}
\end{align*}
otherwise, where the last inclusion uses the fact that $D_{n-1}$ was a $\Z$-domain for $p_{n-1}$. We may choose $C$ large enough so that $\bigcup_{j\in C}t^j\cdot D_{n-1}$ contains a lift of each attaching map of $n$-cells in $X$. Then the full subcomplex of $Y^{(n)}$ containing $\bigcup_{j\in C}t^j\cdot D_{n-1}$ is a $\Z$-domain for $p_n$.
\end{proof}

\subsection{Graphs and one-relator complexes}

If we restrict our attention to graphs, the topology of minimal $\Z$-domains is much simpler.

\begin{lemma}
\label{minimal_char}
Let $\Gamma$ be a finite graph and let $p:Y\to \Gamma$ be a $\Z$-cover. If $D\in \zd(p)$ is a minimal $\Z$-domain, then
\begin{align*}
\chi(D) &= \chi(\Gamma) + 1,\\
\chi(D\cap t\cdot D) &= 1.
\end{align*}
\end{lemma}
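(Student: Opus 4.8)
The statement concerns a minimal cyclic tree domain $D \subset Y$, where $p \colon Y \to \Gamma$ is a cyclic cover of a finite graph. I want to compute $\chi(D)$ and $\chi(D \cap 1 \cdot D)$. The plan is to first reduce to a standard form for $D$ using the notation $T_C^A$ developed in Subsection \ref{cyclic_domain}. Since $Y$ is a graph, choose a spanning tree $T \subset \Gamma$, an orientation, and write $\pi_1(\Gamma) = F(\Sigma)$ with $\Sigma$ the non-tree edges. By Proposition \ref{tree_existence} (applied in the graph case, which is really the 1-skeleton step in that proof), tree domains exist and have the shape $T_C^{\Sigma}$ for $C \subset \Z$ connected with $|C|$ sufficiently large; in fact any tree domain contains some translate $j \cdot T_C^{\Sigma}$, and I would first argue that a \emph{minimal} tree domain is itself of the form $T_C^{\Sigma}$ for the smallest connected $C$ making $T_C^{\Sigma}$ connected — call this minimal size $k$. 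The key point is that minimality forces $D$ to be exactly connected (a single component), with no redundant translated copies, so $D = T_{\{0,\dots,k-1\}}^{\Sigma}$ up to translation.

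Next I would compute Euler characteristics by counting cells. Write $v = |V(\Gamma)|$, $e_T = v - 1$ for the tree edges, and $s = |\Sigma|$, so $\chi(\Gamma) = v - (v-1) - s = 1 - s$. In $T_C^{\Sigma}$ with $|C| = k$: the vertices are those of the $k$ copies $T_0, \dots, T_{k-1}$, giving $kv$ vertices; the tree edges give $k(v-1)$; and for each $a \in \Sigma$ the edges $a_i$ present are those with $i \in C_a$, i.e. $i$ and $i + \iota(a)$ both in $C$, which is $k - |\iota(a)|$ of them (for $k$ large enough that this is nonnegative, which holds since $T_C^{\Sigma}$ is connected). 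So $\chi(D) = kv - k(v-1) - \sum_{a \in \Sigma}(k - |\iota(a)|) = k - ks + \sum_{a\in\Sigma}|\iota(a)|$. For this to equal $\chi(\Gamma) + 1 = 2 - s$, I need $k(1 - s) + \sum |\iota(a)| = 2 - s$. This is where the \emph{minimality} of $k$ must enter quantitatively: the number of components of $T_C^\Sigma$ drops by exactly one each time we enlarge $C$ by one (in the relevant range), as in the proof of Proposition \ref{connectedness}, so $k$ is pinned down precisely, and the arithmetic should collapse to the claimed value. For $D \cap 1 \cdot D$: this is $T_{C \cap (C+1)}^{\Sigma'}$ for the appropriate edge set, i.e. (up to indexing) $T_{\{1,\dots,k-1\}}^{\Sigma}$-like object of size $k - 1$; running the same count with $k$ replaced by $k-1$ gives $\chi(D \cap 1\cdot D) = \chi(D) - (1 - s) = \chi(D) - \chi(\Gamma) = 1$.

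The main obstacle is establishing the exact value of the minimal $k$ and translating "$D$ is minimal under inclusion" into "$C$ has the minimal size making $T_C^\Sigma$ connected, with no extra garbage." One has to rule out minimal tree domains that are connected but not of the clean form $T_C^\Sigma$ — e.g. a subcomplex that omits some edge $a_i$ yet stays $S$-connected and satisfies $D \cap s\cdot D$ connected; I expect $S$-connectedness (Definition in the tree-domain subsection) plus the requirement $F \cdot D = Y$ to force inclusion of all such edges once the vertex set is an interval, and minimality to force the vertex set to be precisely an interval of the critical length. Once $D = T_C^\Sigma$ with $|C| = k$ minimal is secured, the rest is the bookkeeping above, and the identity $k(1-s) + \sum_{a\in\Sigma}|\iota(a)| = 2 - s$ should follow from the component-count recursion in Proposition \ref{connectedness} together with the base relation $k_\Sigma = 1$. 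I would double-check the edge case $s = 0$ (then $\Gamma$ is a tree, $\core(\Gamma) = \emptyset$) separately, though in that degenerate situation cyclic covers are trivial and the statement is vacuous or immediate.
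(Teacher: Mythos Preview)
Your approach contains the key relation but takes a detour that does not close. The observation in your step~4 that $\chi(D\cap 1\cdot D) = \chi(D) - \chi(\Gamma)$ is exactly the paper's first step, and in fact it holds for \emph{every} tree domain, not only those of the form $T_C^{\Sigma}$: by $S$-connectedness together with $F\cdot D = Y$, the set $D\setminus(1\cdot D)$ contains exactly one cell from each $\deck(p)$-orbit, so $\chi(D)-\chi(D\cap 1\cdot D)=\chi(\Gamma)$. After contracting a spanning tree so that $\Gamma$ is a rose with $n$ edges, this reads $\chi(D\cap 1\cdot D)=\chi(D)+n-1$.

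The gap is in your attempt to determine $\chi(D)$ by pinning down the minimal $|C|=k$. Two concrete problems arise. First, ``the smallest $C$ making $T_C^{\Sigma}$ connected'' is not the tree-domain condition: one also needs $T_C^{\Sigma}\cap 1\cdot T_C^{\Sigma}$ connected, which is strictly stronger. In the paper's own Example~\ref{primitive_example} (with $\iota(a)=5$, $\iota(b)=3$), the graph $T_{\{0,\dots,6\}}^{\Sigma}$ is already connected but is explicitly \emph{not} a tree domain; the minimal tree domain has $|C|=8$. Second, your claim that the component count of $T_C^{\Sigma}$ drops by exactly one as $|C|$ increases is false: in that same example the number of components runs $1,2,3,3,3,2,1$ as $|C|$ goes from $1$ to $7$. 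So the recursion you invoke from Proposition~\ref{connectedness} does not yield the arithmetic identity $k(1-s)+\sum_{a}|\iota(a)|=2-s$ that you need.

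The paper bypasses all of this. Having the relation $\chi(D\cap 1\cdot D)=\chi(D)+n-1$, it simply argues that minimality of $D$ forces $D\cap 1\cdot D$ to be a tree: were it not, one could delete a suitable extremal edge from $D$ and obtain a strictly smaller tree domain, contradicting minimality. Hence $\chi(D\cap 1\cdot D)=1$, and then $\chi(D)=\chi(\Gamma)+1$ follows immediately from the relation. No explicit value of $k$, and no assumption that $D$ has the special form $T_C^{\Sigma}$, is ever needed.
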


\begin{proof}
Up to contracting a spanning tree, we may assume that $\Gamma$ has a single vertex and $n$ edges. Let $D\in \zd(p)$ be a minimal $\Z$-domain. We have $\chi(D\cap t\cdot D) = \chi(D) + n - 1$ since $D\cap t\cdot D$ is obtained from $D$ by removing the lowest vertex and each of the $n$ lowest edges. We claim that $D\cap t\cdot D$ is a tree. If not, then there would be an embedded cycle $S^1\injects D\cap t\cdot D$. By possibly replacing this cycle with a translate, we may assume that it traverses an edge $e\subset D$ such that $t\cdot e$ does not lie in $D$. Then removing this edge will leave us with a smaller $\Z$-domain, contradicting minimality. Thus, we must have that $D\cap t\cdot D$ is actually a tree, otherwise we could remove some edge. Thus $\chi(D) = \chi(\Gamma) + 1$ and $\chi(D\cap t\cdot D) = 1$.
\end{proof}

As a consequence of Lemma \ref{minimal_char} and Proposition \ref{splitting}, we can see that minimal $\Z$-domains of $\Z$-covers of graphs $\Gamma$ correspond to certain free product decompositions of $\pi_1(\Gamma)$.

\begin{example}
\label{primitive_example}
Let $\Gamma$ be a graph with a single vertex and two edges, so $\chi(\Gamma) = -1$. The spanning tree is the unique vertex $v\in \Gamma$. Let $\Sigma = \{a, b\}$ be the two edges and let $\phi:F(a, b)\to \Z$ be the homomorphism that sends $a$ to $5$ and $b$ to $3$. Then let $p:Y\to \Gamma$ be the corresponding $\Z$-cover. Let $C= \{0, 1, ..., 6\}$, then $T_C^{\Sigma}$ can be seen in Figure \ref{cyclic_tree}. The numbers $i$ in the diagram correspond to $T_i$, the lower black edges are lifts of $a$ and the upper red edges are lifts of $b$ edges. This is not quite a $\Z$-domain as its intersection with a translate is disconnected. However, we can see that $T_{C\cup 7}^{\Sigma}$ is a genuine $\Z$-domain, see Figure \ref{minimal_cyclic_tree}. In fact, it is the unique minimal $\Z$-domain for $p$. The unique primitive cycle that factors through $T_{C\cup 7}^{\Sigma}$, represents the unique conjugacy class of primitive element in $\ker(\phi)$.
\end{example}

\begin{figure}
\centering
\includegraphics[scale=0.8]{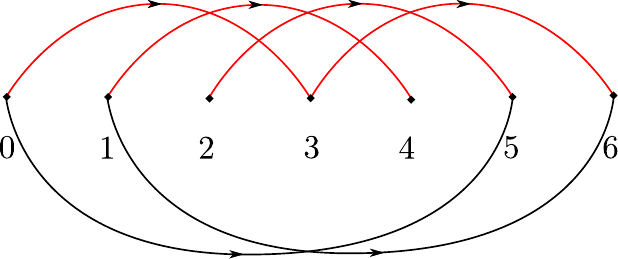}
\caption{Not quite a $\Z$-domain.}
\label{cyclic_tree}
\end{figure}

\begin{figure}
\centering
\includegraphics[scale=0.8]{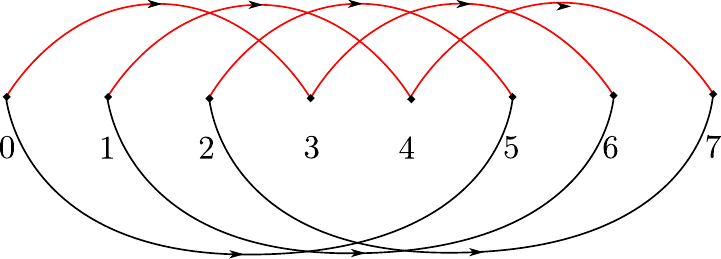}
\caption{A minimal $\Z$-domain.}
\label{minimal_cyclic_tree}
\end{figure}

Another special case is that of one-relator complexes. The following two propositions are the foundations for our one-relator hierarchies.

\begin{proposition}
\label{one-relator_domain}
Let $X$ be a finite one-relator complex, let $p\colon Y\to X$ be a $\Z$-cover and let $Z\subset \zd(p)$ be a minimal $\Z$-domain. Then $Z$ is a finite one-relator complex, such that either $Z\cap t\cdot Z$ is a tree, or $t^{-1}\cdot Z\cap Z\cap t\cdot Z$ is connected.
\end{proposition}

\begin{proof}
Let $X = (\Gamma, \lambda)$ be a finite one-relator complex and $p:Y\to X$ a $\Z$-cover. Let $Z\subset Y$ be a minimal $\Z$-domain for $p$. There is a minimal $\Z$-domain $D\subset Y^{(1)}$ for the $\Z$-cover $Y^{(1)}\to X^{(1)}$ contained in $Z$. By Lemma \ref{minimal_char}, $D\cap t\cdot D$ is a tree. Just as in the proof of Proposition \ref{tree_existence}, we have that $\bigcup_{i\in C}t^i\cdot D$ is also a $\Z$-domain for all connected subsets $C\subset \Z$. Now let $C = \{0, \ldots, m\}$ be of smallest size such that a lift of $\lambda$ is supported in $\bigcup_{i\in C}t^i\cdot D$. Since $D\cap t\cdot D$ is a tree, at most one lift $\tilde{\lambda}$ can be supported in this subgraph. Now the union of this subgraph with the two-cell attached along $\tilde{\lambda}$ is a $\Z$-domain which must contain a translate of $Z$ by minimality. Thus, $Z$ is a one-relator complex. By construction, either $C = \{0\}$ and so $Z\cap t\cdot Z = D\cap t\cdot D$ and so is a tree, or $|C|>1$ and so $Z\cap t\cdot Z$ is a $\Z$-domain for $Y^{(1)}\to X^{(1)}$ and so $t^{-1}\cdot Z\cap Z\cap t\cdot Z$ is connected.
\end{proof}

\begin{remark}
More generally, by \cite[Lemma 2]{howie_87}, if $X$ is a staggered two-complex and $p:Y\to X$ a cyclic cover, then every $\Z$-domain $D\in \zd(p)$ is also a staggered two-complex. Furthermore, by \cite[Corollary 6.2]{hruska_01} and Proposition \ref{splitting}, each $D\in \zd(p)$ induces a HNN-splitting of $\pi_1(X)$ over $\pi_1(D)$.
\end{remark}

\subsection{One-relator hierarchies}
\label{sec:hierarchy}

Define the \emph{complexity} of a one-relator complex $X = (\Gamma, \lambda)$ to be the following quantity:
\[
c(X) = \left(\frac{\abs{\lambda}}{\deg(\lambda)} - \abs{X_{\lambda}^{(0)}}, -\chi(X)\right).
\]
We endow the complexity of one-relator complexes with the dictionary order so that $(q, r)< (s, t)$ if $q<s$ or $q = s$ and $r<t$. Note that the first component of $c(X)$ is a slight modification of the notion of complexity used by Howie in \cite{howie_81}.

\begin{proposition}
\label{complexity}
Let $X$ be a finite one-relator complex and $p:Y\to X$ a cyclic cover. If $D\in \zd(p)$ is a minimal $\Z$-domain, then:
\[
c(D)<c(X).
\]
\end{proposition}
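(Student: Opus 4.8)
The plan is to treat the two coordinates of the complexity in turn. Throughout let $D\in\td(p)$ be a minimal tree domain; by Proposition~\ref{one-relator_domain} it is a finite one-relator complex, so it contains a single lift $\tilde\lambda$ of the relator cycle $\lambda$ of $X$, and $D=(D^{(1)},\tilde\lambda)$. Write $\iota$ for the level function of the cyclic cover, so that every vertex of $Y$ lies in a unique $T_n$ and traversing a lift of an edge $e$ changes the level by $\pm\iota(e)$, where $\iota$ vanishes on the chosen spanning tree.

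First I would compare the first coordinates. Since $p\circ\tilde\lambda=\lambda$ and $p$ is a covering map, $\abs{\tilde\lambda}=\abs{\lambda}$, and moreover $\deg(\tilde\lambda)\ge\deg(\lambda)$: if $\lambda$ has period $\abs{\lambda}/\deg(\lambda)$ as an edge-word, then the sequence of level changes of $\tilde\lambda$ has the same period and sums to $0$ over all of $\tilde\lambda$, hence already sums to $0$ over one period, so the level returns to its starting value after each period and the edge-word of $\tilde\lambda$ has that period too. Therefore the first coordinate of $c(D)$ is $\abs{\lambda}/\deg(\tilde\lambda)-\abs{D_{\tilde\lambda}^{(0)}}\le\abs{\lambda}/\deg(\lambda)-\abs{D_{\tilde\lambda}^{(0)}}$. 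Now $p$ restricts to a combinatorial surjection $D_{\tilde\lambda}\to X_\lambda$, so $\abs{D_{\tilde\lambda}^{(0)}}\ge\abs{X_\lambda^{(0)}}$ and the first coordinate of $c(D)$ is at most that of $c(X)$. If it is strictly smaller we are done; so from now on assume the first coordinates agree, which forces both $\deg(\tilde\lambda)=\deg(\lambda)$ and the bijectivity of $p$ on the vertices of $D_{\tilde\lambda}$.

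Being a combinatorial immersion that is bijective on vertices, $p|_{D_{\tilde\lambda}}$ is then bijective on edges and on the unique $2$-cells, hence an isomorphism $D_{\tilde\lambda}\isom X_\lambda$. Consequently the image of $\pi_1(D_{\tilde\lambda})$ in $\pi_1(X)$ — which lies in $\ker\phi$, since $D_{\tilde\lambda}\subset Y$ — equals the image of $\pi_1(X_\lambda)$; taking the spanning tree of $X$ to restrict to a spanning tree of $X_\lambda$, this forces $\iota(e)=0$ for every edge $e$ traversed by $\lambda$, so after a translation $\tilde\lambda$ is supported at a single level. It remains to prove $\chi(D)>\chi(X)$, which gives $-\chi(D)<-\chi(X)$ and hence $c(D)<c(X)$. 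Applying Proposition~\ref{splitting} with $F=\deck(p)\isom\Z$ and $S=\{1\}$ — its hypothesis holds because $D\cap 1\cdot D$ is a Magnus subcomplex of $D$, hence $\pi_1$-injective by Theorem~\ref{freiheitssatz} — exhibits $X$ as homotopy equivalent to a graph of spaces over $S^1$ with vertex space $D$ and edge space $D\cap 1\cdot D$, whence $\chi(X)=\chi(D)-\chi(D\cap 1\cdot D)$. Since $\Z$ acts freely on $Y$ we have $1\cdot\tilde\lambda\neq\tilde\lambda$, so $D\cap 1\cdot D$ contains no $2$-cell; it is a graph, and it is connected and non-empty by the tree-domain axioms. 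So it suffices to show that $D\cap 1\cdot D$ is a tree, for then $\chi(D\cap 1\cdot D)=1$ and $\chi(D)=\chi(X)+1$.

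This last claim — that a minimal tree domain whose relator is supported at one level has $D\cap 1\cdot D$ a tree — is where I expect the real work to lie; it is the one-relator analogue of the one-line observation made for graphs in the proof of Lemma~\ref{minimal_char}. Suppose $D\cap 1\cdot D$ contained an embedded cycle $\sigma$. For each edge $e$ of $\sigma$, both $e$ and $1^{-1}\cdot e$ lie in $D$, so the $\Z$-orbit of $e$ meets $D$ in at least two cells and deleting $e$ preserves $F\cdot D=Y$. Since $\tilde\lambda$ occupies a single level while $e$ and $1^{-1}\cdot e$ occupy consecutive levels, some edge of $\sigma$ is not on $\tilde\lambda$; and if every edge of $\sigma$ were, then $1^{-1}\cdot\sigma$ would be a cycle in $D$ all of whose edges avoid $\tilde\lambda$, to which the same deletion applies. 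The task is then to verify, exactly as in the graph case but now with the $2$-cell present, that after deleting such an edge one still has an $S$-connected subcomplex whose intersection with its $1$-translate is connected, contradicting the minimality of $D$. The level restriction on $\tilde\lambda$ obtained from the equality case is precisely what keeps the $2$-cell from interfering with this combinatorial bookkeeping, and carrying it out carefully is the main obstacle.
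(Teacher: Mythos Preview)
Your reduction is correct and matches the paper's: the first coordinate cannot increase, and in the equality case $p|_{D_{\tilde\lambda}}$ is an isomorphism, so $X_\lambda$ lifts to $Y$ and sits inside $D$ as a subcomplex. Your Euler-characteristic identity $\chi(X)=\chi(D)-\chi(D\cap 1\cdot D)$ is also the right target, so everything comes down to showing $D\cap 1\cdot D$ is a tree.

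Where you diverge from the paper is in how to finish. You propose to rerun the combinatorial argument behind Lemma~\ref{minimal_char} with the $2$-cell present: find a cycle in $D\cap 1\cdot D$, delete an edge, and check the result is still a tree domain. As you yourself note, this is where the work lies, and your sketch does not complete it --- in particular you do not verify $S$-connectedness after deletion (the edge you pick from $\sigma$ need not be extremal in its $\Z$-orbit inside $D$), nor connectedness of $D'$ and of $D'\cap 1\cdot D'$. These issues are real and would need a more careful choice of edge than ``some edge of $\sigma$ not on $\tilde\lambda$''.

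The paper avoids this entirely. Once $X_\lambda$ lifts, collapse it: $X/X_\lambda$ is a \emph{graph}, it inherits the epimorphism to $\Z$ (since $\pi_1(X_\lambda)\subset\ker\phi$), and the quotient $Y\to\bar Y$ collapsing each translate $i\cdot X_\lambda$ carries tree domains for $p$ to tree domains for the induced cyclic cover $\bar Y\to X/X_\lambda$. Minimality transfers, and now Lemma~\ref{minimal_char} applies \emph{verbatim} to the graph $X/X_\lambda$, giving $\chi(D)=\chi(X)+1$. In other words, rather than redoing the graph argument in the presence of a $2$-cell, the paper removes the $2$-cell by quotienting and invokes the lemma you were about to reprove. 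This is the step you are missing.
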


\begin{proof}
It is clear that if $c(D) = (q, r)$ and $c(X) = (s, t)$, we cannot have $q> s$. Suppose that we have $q = s$, then we must have that $X_{\lambda}$ actually lifts to all $\Z$-domains. But then this implies that $X_{\lambda}$ is a subcomplex of $D$. So applying Lemma \ref{minimal_char} to the induced $\Z$-cover of the graph $X/X_{\lambda}$, we see that $\chi(D) = \chi(X) + 1$ when $D$ is minimal. Thus, $r<t$ and $c(D)<c(X)$ when $D$ is minimal.
\end{proof}

By Proposition \ref{one-relator_domain}, if $X$ is a one-relator complex and $p:Y\to X$ is a cyclic cover, there exists a one-relator complex $X_1\in \zd(p)$. Repeating this, we obtain a sequence of immersions of one-relator complexes $X_N\immerses...\immerses X_1\immerses X_0 = X$ where $X_i$ is a one-relator $\Z$-domain of a $\Z$-cover of $X_{i-1}$ for each $i$. We will call this a \emph{one-relator tower}. Note that each immersion is a tower map in the sense of \cite{howie_81}. If $X_N$ does not admit any $\Z$-covers, we will call this a \emph{maximal} one-relator tower. 

\begin{proposition}
\label{tower}
Every finite one-relator complex $X = (\Gamma, \lambda)$ has a maximal one-relator tower $X_N\immerses ...\immerses X_1\immerses X_0 = X$.
\end{proposition}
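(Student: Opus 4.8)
The plan is to build a maximal one-relator tower greedily and to extract termination from the complexity function $c$. First I would set up the iterative step. Given a finite one-relator complex $X_i$: if $X_i$ admits no nontrivial cyclic cover, we stop; otherwise fix such a cover $p_i : Y_i \to X_i$. By Proposition \ref{tree_existence}, $\td(p_i)$ contains a finite tree domain $D$, and since $D$ has only finitely many subcomplexes, among those that are tree domains there is one, call it $X_{i+1}$, that is minimal under inclusion; it is in fact minimal among all of $\td(p_i)$, since any strictly smaller tree domain would again be a subcomplex of $D$. The composite $X_{i+1} \hookrightarrow Y_i \to X_i$ is an inclusion of a subcomplex followed by a cyclic cover, hence a tower map in the sense of \cite{howie_81}, and in particular an immersion. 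By Proposition \ref{one-relator_domain}, $X_{i+1}$ is again a finite one-relator complex, so the iteration makes sense; and by Proposition \ref{complexity}, $c(X_{i+1}) < c(X_i)$ in the dictionary order.

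Iterating from $X_0 = X$ produces immersions $\cdots \immerses X_2 \immerses X_1 \immerses X_0 = X$ of finite one-relator complexes with $c(X_0) > c(X_1) > c(X_2) > \cdots$, and by construction the sequence halts exactly when some $X_N$ admits no cyclic cover, i.e. exactly when $X_N \immerses \cdots \immerses X_0 = X$ is a maximal one-relator tower. So the entire content of the proposition is that this strictly decreasing sequence of complexities must be finite.

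That is the one point requiring an argument, and where I would be most careful: I claim the complexities $c(X_i) = (q_i, r_i)$ all lie in a single set on which the dictionary order is a well-order. For the first coordinate, $q_i = \abs{\lambda_i}/\deg(\lambda_i) - \abs{(X_i)_{\lambda_i}^{(0)}}$ is a non-negative integer, because the primitive cycle underlying $\lambda_i$ has combinatorial length $\abs{\lambda_i}/\deg(\lambda_i)$ and image the $1$-skeleton of $(X_i)_{\lambda_i}$, so it passes through $\abs{(X_i)_{\lambda_i}^{(0)}}$ distinct vertices while traversing only $\abs{\lambda_i}/\deg(\lambda_i)$ edges. For the second coordinate, $r_i = -\chi(X_i) \geq -1$: the complex $X_i$ has a single $2$-cell and its $1$-skeleton $\Gamma_i$, which is connected (tree domains are connected, and $X$ may be assumed connected), contains the immersed cycle $\lambda_i$ and so is not a tree, whence $\chi(\Gamma_i) \leq 0$ and $\chi(X_i) = \chi(\Gamma_i) + 1 \leq 1$. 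Thus every $c(X_i)$ lies in $\N \times \{-1, 0, 1, 2, \dots\}$, on which the dictionary order is a well-order, so there is no infinite strictly decreasing sequence of complexities and the tower terminates. The main obstacle is essentially just this bookkeeping, namely pinning down the correct lower bounds for the two coordinates of $c$; everything else follows immediately from Propositions \ref{tree_existence}, \ref{one-relator_domain} and \ref{complexity}.
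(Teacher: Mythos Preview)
Your proof is correct and follows essentially the same approach as the paper: both use Proposition \ref{complexity} to show the complexity strictly decreases along the tower and then argue termination, the paper by induction on $c(X)$ and you by the equivalent descending-chain/well-ordering argument on the range of $c$. Your treatment is if anything more explicit, since you verify lower bounds on both coordinates of $c$ rather than simply naming the minimal case.
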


\begin{proof}
The proof is by induction on $c(X)$, the base case is $c(X) = (0, 0)$. We have $c(X) = (0, 0)$ precisely when $\Gamma\simeq S^1$. This is the only case where $X$ does not admit any $\Z$-cover. This is because if $c(X)>(0, 0)$, then $\chi(X)\leq 0$ and so $\rk(H_1(X, \Z))\geq 1$. Hence, the base case holds. The inductive step is simply Proposition \ref{complexity}.
\end{proof}

Now we are ready to prove our version of the Magnus--Moldavanskii hierarchy for one-relator groups.

\begin{theorem}
\label{new_hierarchy}
Let $X$ be a finite one-relator complex. There exists a finite sequence of immersions of one-relator complexes:
\[
X_N\immerses ...\immerses X_1\immerses X_0 = X
\]
such that $\pi_1(X_i)\isom \pi_1(X_{i+1})*_{\psi_i}$ where $\psi_i$ is induced by an identification of Magnus subgraphs, and such that $\pi_1(X_N)$ is finite cyclic.
\end{theorem}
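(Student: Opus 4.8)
The plan is to assemble this statement from the existence of a maximal one-relator tower together with the graph-of-spaces machinery of the previous section. First I would invoke Proposition \ref{tower} to fix a maximal one-relator tower
\[
X_N\immerses\cdots\immerses X_1\immerses X_0 = X,
\]
in which, by construction, each $X_{i+1}$ is a minimal tree domain of some cyclic cover $p_i:Y_i\to X_i$; that each $X_{i+1}$ is again a one-relator complex is Proposition \ref{one-relator_domain}, and finiteness of the tower comes from Proposition \ref{complexity}. Each map $X_{i+1}\immerses X_i$ is the composite of the inclusion $X_{i+1}\subset Y_i$ with the covering $p_i$, hence an immersion (indeed a tower map), so the underlying sequence of maps is already of the required shape; it remains to describe the splittings $\psi_i$ and the bottom group.

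For the splittings, for each $i$ I would apply Proposition \ref{splitting} to $p_i$, to the free basis $S=\{1\}$ of $\deck(p_i)\isom\Z$, and to the tree domain $D=X_{i+1}$. Since $|S|=1$, the Cayley graph $\cay(\Z,S)$ is a line and its quotient is a single loop, so the multiple HNN-extension produced by Proposition \ref{splitting} is an ordinary HNN-extension
\[
\pi_1(X_i)\isom\pi_1(X_{i+1})*_{\psi_i},
\]
with vertex group $\pi_1(X_{i+1})$, edge group $\pi_1(X_{i+1}\cap 1\cdot X_{i+1})$, and $\psi_i$ induced by the two edge inclusions $\iota^{\pm}$ of $X_{i+1}\cap 1\cdot X_{i+1}$ into $X_{i+1}$ and into $1\cdot X_{i+1}$ (the latter post-composed with the deck translation carrying $1\cdot X_{i+1}$ onto $X_{i+1}$). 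To legitimately invoke Proposition \ref{splitting} through Lemma \ref{h_map}, I must verify that the edge inclusion $X_{i+1}\cap 1\cdot X_{i+1}\to X_{i+1}$ is injective on $\pi_1$.

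The crucial point — and the only step that needs care — is that $E_i := X_{i+1}\cap 1\cdot X_{i+1}$ is a Magnus subcomplex of $X_{i+1}$. It is connected and non-empty by the tree-domain axioms. If the relator $\lambda_{i+1}$ of $X_{i+1}$ were supported in $E_i$, then its (unique) supporting $2$-cell would also lie in $1\cdot X_{i+1}$, and translating by $-1$ would produce a $2$-cell of $X_{i+1}$ distinct from the one we started with — distinct because the $\Z$-action on $Y_i$ is free — contradicting that $X_{i+1}$ is a one-relator complex. Hence $\lambda_{i+1}$ is not supported in $E_i$, so $E_i$ contains no $2$-cell and is a connected subgraph of $X_{i+1}^{(1)}$ missing $\lambda_{i+1}$, i.e. a Magnus subcomplex; the same argument applies to $(-1)\cdot E_i=(-1)\cdot X_{i+1}\cap X_{i+1}$. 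Now the Freiheitssatz (Theorem \ref{freiheitssatz}) supplies the required $\pi_1$-injectivity, Lemma \ref{h_map} and Proposition \ref{splitting} apply, and $\psi_i$ is visibly induced by the identification of the two Magnus subcomplexes $E_i$ and $(-1)\cdot E_i$ of $X_{i+1}$.

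It remains to treat the bottom complex $X_N$. Since it admits no cyclic cover, $\pi_1(X_N)$ has no epimorphism onto $\Z$, so $H_1(X_N;\Z)$ is finite; as in the proof of Proposition \ref{tower}, for a one-relator complex this forces $\pi_1(X_N)$ to be a finite cyclic group, which is in particular a free product of cyclic groups. I anticipate no serious obstacle here: the substantive work all sits in the results already established — existence and finiteness of the tower, one-relator-ness and connectedness of minimal tree domains, and the graph-of-spaces splitting — so the proof of this theorem is essentially their assembly, the one point requiring genuine attention being the Magnus-subcomplex verification of the previous paragraph, which is precisely what guarantees that the HNN-splittings produced are of the advertised form.
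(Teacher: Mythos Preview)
Your proposal is correct and follows essentially the same approach as the paper: invoke Proposition~\ref{tower} for a maximal tower, use the Freiheitssatz to make the edge inclusions $\pi_1$-injective, and then apply Lemma~\ref{h_map} and Proposition~\ref{splitting} to obtain the HNN splittings. Your explicit verification that $X_{i+1}\cap 1\cdot X_{i+1}$ is a Magnus subcomplex is a welcome addition of detail the paper leaves implicit; the only small omission is the reduction to finite $X$ (needed since Proposition~\ref{tower} is stated for finite complexes), which the paper dispatches in one sentence and which is immediate since $\pi_1(X)\cong\pi_1(X_\lambda)*F$ with $F$ free.
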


\begin{proof}
By Proposition \ref{tower} there is a maximal one-relator tower $X_N\immerses ...\immerses X_1\immerses X_0 = X$. By Theorem \ref{freiheitssatz}, the inclusions $X_i\cap t\cdot X_i\injects X_i, t\cdot X_i$ are all injective on $\pi_1$. Thus, by Proposition \ref{splitting}, $\pi_1(X_i)\isom \pi_1(X_{i+1})*_{\psi_i}$ for some isomorphisms $\psi_i$ induced by an identification of Magnus subgraphs.
\end{proof}

We will call a splitting $\pi_1(X_i)\isom \pi_1(X_{i+1})*_{\psi}$ as in Theorem \ref{new_hierarchy} a \emph{one-relator splitting}. Then the \emph{associated Magnus subgraphs inducing $\psi$} are $A = t^{-1}\cdot X_{i+1}\cap X_{i+1}$ and $B = X_{i+1}\cap t\cdot X_{i+1}$. 

\begin{remark}
\label{magnus_remark}
By Proposition \ref{one-relator_domain}, each $\pi_1(X_{i+1})$ from Theorem \ref{new_hierarchy} has a one-relator presentation so that $\psi_i$ actually identifies two Magnus subgroups for this presentation.
\end{remark}

We will call a one-relator tower $X_N\looparrowright...\looparrowright X_1\looparrowright X_0 = X$ a \emph{hierarchy of length $N$} if $\pi_1(X_N)$ splits as a free product of cyclic groups. Denote by $h(X)$ the \textit{hierarchy length} of $X$, that is, the smallest integer $N$ such that a hierarchy for $X$ of length $N$ exists. By Theorem \ref{new_hierarchy} this is well-defined for all one-relator complexes. We extend this definition also to one-relator presentations $\langle \Sigma \mid w\rangle$ by saying that the hierarchy length of a one-relator presentation is the hierarchy length of its presentation complex. A hierarchy $X_N\looparrowright...\looparrowright X_1\looparrowright X_0 = X$  is of \textit{minimal length} if $N = h(X)$.

The hierarchy length of a one-relator complex is not preserved under homotopy equivalence. This is illustrated by the following examples.

\begin{example}
\label{hierarchy_length_example}
Let $\pm 1\neq p, q\in \Z$ be a pair of coprime integers. Denote by $\pr_{p, q}(x, y)$ any cyclically reduced element in the unique conjugacy class of primitive elements in $\ab^{-1}(p, q)$ where $\ab:F(x, y)\to \Z^2$ is the abelianisation map. Then the Baumslag--Solitar group $\bs(p, q)$ has two one-relator presentations:
\[
\bs(p, q) \isom \langle a, t | t^{-1}a^pt = a^q \rangle \isom \langle b, s | \pr_{p, -q}(b, s^{-1}bs) \rangle
\]
such that the hierarchy length of the first presentation is 2, but of the second is 1. Since presentation complexes of one-relator groups without torsion are aspherical by Lyndon's identity theorem\cite{lyndon_50}, the two associated presentation complexes are homotopy equivalent. 

Note that the words $t^{-1}a^pta^{-q}$ and $\pr_{p, -q}(a, t^{-1}at)$ are not equivalent under the action of $\aut(F(a, t))$ if $\abs{p}, \abs{q}\geq 2$. Similarly for $t^{-1}a^pta^{-q}$ and $\pr_{p, -q}(a, t^{-1}at)^{-1}$.

We may do the same with the Baumslag--Gersten groups:
\[
BG(p, q) \isom \langle a, t |t^{-1}ata^pt^{-1}a^{-1}t = a^q \rangle \isom \langle b, s | \pr_{p, -q}(b, s^{-1}bsbs^{-1}b^{-1}s)\rangle
\]
The hierarchy length of the first presentation is three, but of the second is two.
\end{example}

\section{Normal forms and quasi-convex one-relator hierarchies}
\label{sec:normal_forms}

\subsection{Normal forms}

Computable normal forms for one-relator complexes can be derived from the original Magnus hierarchy \cite{magnus_30}. Following the same idea, we build normal forms for universal covers of one-relator complexes. The geometry of these normal forms will then allow us to prove our main theorem.

Let $X$ be a combinatorial $2$-complex. We define $I(X)$ to be the set of all combinatorial immersed paths. If $c\in I(X)$, then we denote by $\bar{c}$ the reverse path. If $a, b\in I(X)$ are paths with $t(a) = o(b)$, we will write $a*b$ for their concatenation.

\begin{definition}
A \emph{normal form} for $X$ is a map $\eta:X^{(0)}\times X^{(0)}\to I(X)$ where $o(\eta(p,q)) = p$ and $t(\eta(p,q)) = q$ and $\eta(p,p)$ is the constant path, for all $p, q\in X^{(0)}$. We say that $\eta$ is \emph{prefix-closed} if for every $p, q\in X^{(0)}$ and $i\in [0,\abs{\eta(p,q)}]$, we have that $\eta(p,r) = \eta(p,q)\mid_{[0, i]}$ where $r = (\eta(p,q))(i)$.
\end{definition}

We will be particularly interested in certain kinds of normal forms, defined below, the first being quasi-geodesic normal forms and the second being normal forms relative to a given subcomplex. The motivation for these definitions is the following: if $X$ is a two-complex, $Z\subset X$ a $\pi_1$-injective subcomplex such that the universal cover $\tilde{X}$ admits quasi-geodesic normal forms relative to a copy of the universal cover $\tilde{Z}\subset \tilde{X}$, then $\pi_1(Z)$ will be undistorted in $\pi_1(X)$.

\begin{definition}
Let $K>0$. A normal form $\eta:X^{(0)}\times X^{(0)}\to I(X)$ is \emph{$K$-quasi-geodesic} if every path in $\Ima(\eta)$ is a $K$-quasi-geodesic. We will call $\eta$ \emph{quasi-geodesic} if it is $K$-quasi-geodesic for some $K>0$.
\end{definition}

\begin{definition}
Suppose $Z\subset X$ is a subcomplex. A normal form $\eta:X^{(0)}\times X^{(0)}\to I(X)$ is a \emph{normal form relative to $Z$} if, for any $p, q$ contained in the same connected component of $Z$ and $r\in X$, the following hold:
\begin{enumerate}
\item $\eta(p,q)$ is supported in $Z$,
\item\label{itm:relative_normal} $\eta(p,r)*\bar{\eta}(q,r)$ is supported in $Z$ after removing backtracking.
\end{enumerate}
\end{definition}

By definition, any normal form for $X$ is a normal form relative to $X^{(0)}$. 

The general strategy for building our normal forms for universal covers of one-relator complexes will be to use one-relator hierarchies and induction. In order to do so, we must first discuss normal forms for graphs of spaces.

\subsection{Graph of spaces normal forms}
\label{gog_normal_forms}

For simplicity, we only discuss normal forms for graphs of spaces with underlying graph consisting of a single vertex and single edge. However, the construction is the same for any graph.

Let $\mathcal{X} = (\Gamma, \{X\}, \{C\}, \partial^{\pm})$ be a graph of spaces where $\Gamma$ consists of a single vertex and a single edge, $X$ and $C$ are combinatorial two-complexes and $\partial^{\pm}$ are inclusions of subcomplexes. Denote by $A = \Ima(\partial^-)$ and $B = \Ima(\partial^+)$. We may orient the vertical edges so that they go from the subcomplex $A$ to the subcomplex $B$. When it is clear from context which path we mean, we will abuse notation and write $t^{k}$ for all $k\geq 0$, to denote a path that follows $k$ vertical edges consecutively, respecting this orientation. We will write $t^{-k}$ for all $k\geq 0$, for the path that follows $k$ vertical edges in the opposite direction. Since $\partial^{\pm}$ are embeddings, these paths are uniquely defined given an initial vertex.

The universal cover $\tilde{X}_{\mathcal{X}}\to X_{\mathcal{X}}$ also has a graph of spaces structure with underlying graph the Bass--Serre tree of the associated splitting, the vertex spaces are copies of the universal cover $\tilde{X}$, the edge spaces are copies of the universal cover $\tilde{C}$ and the edge maps are $\pi_1(X)$-translates of lifts $\tilde{\partial}^{\pm}:\tilde{C}\to \tilde{X}$ of the maps $\partial^{\pm}$. We will denote by $\tilde{\mathcal{X}}$ the underlying graph of spaces so that $X_{\tilde{\mathcal{X}}} = \tilde{X}_{\mathcal{X}}$. Every path $c\in I(X_{\tilde{\mathcal{X}}})$ can be uniquely factorised:
\[
c = c_0*t^{\epsilon_1}*c_1*...*t^{\epsilon_n}*c_n
\]
with $\epsilon_i = \pm 1$ and where each $c_i$ is (a possibly empty path) supported in some copy of $\tilde{X}$. We say $c$ is \emph{reduced} if there are no two subpaths $c_i, c_j$ that are both supported in the same copy of $\tilde{X}$. Hence that no subpath $c_i*t^{\epsilon_{i+1}}*...*t^{\epsilon_j}*c_j$ has both endpoints in the same copy of $\tilde{X}$. In other words, the path $v\circ c$ is immersed, where $v:\tilde{X}_{\mathcal{X}}\to T$ is the vertical map to the Bass--Serre tree $T$. 

A \emph{vertical square} in $\tilde{X}_{\mathcal{X}}$ is a two-cell with boundary path $e*t^{\epsilon}*f*t^{-\epsilon}$ where $e$ and $f$ are edges in two different copies of $\tilde{X}$. We will call the paths $e*t^{\epsilon}$, $t^{\epsilon}*f$, $f*t^{-\epsilon}$, $t^{-\epsilon}*e$ and their inverses \emph{corners} of this square. We may pair up two corners $\alpha$ and $\beta$ if their concatenation $\alpha*\bar{\beta}$ forms the boundary of a square. In this way, each corner exhibits a vertical homotopy to the opposing corner it is paired up with. For instance, there is a vertical homotopy through the square between $t^{\epsilon}*\bar{f}$ and $e*t^{\epsilon}$. 

Now let 
\[
\eta_{\tilde{A}}:\tilde{X}^{(0)}\times\tilde{X}^{(0)}\to I(\tilde{X})
\]
\[
\eta_{\tilde{B}}:\tilde{X}^{(0)}\times\tilde{X}^{(0)}\to I(\tilde{X})
\]
be $\pi_1(X)$-equivariant normal forms relative to $\tilde{A}$ and $\tilde{B}$ respectively. Since these are $\pi_1(X)$-equivariant, it is not important which lift of $\tilde{A}$ and $\tilde{B}$ we choose. Let
\[
\eta_{\tilde{X}}:\tilde{X}^{(0)}\times\tilde{X}^{(0)}\to I(\tilde{X})
\]
be a $\pi_1(X)$-equivariant normal form for $\tilde{X}$. From this data, we may define normal forms for $X_{\tilde{\mathcal{X}}}$ as follows. We say a normal form
\[
\eta:X_{\tilde{\mathcal{X}}}^{(0)}\times X_{\tilde{\mathcal{X}}}^{(0)}\to I(X_{\tilde{\mathcal{X}}})
\]
is \emph{a graph of spaces normal form induced by $(\{\eta_{\tilde{X}}\}, \{\eta_{\tilde{A}}, \eta_{\tilde{B}}\})$} if the following holds for all $c = c_0*t^{\epsilon_1}*c_1*...*t^{\epsilon_n}*c_n\in \Ima(\eta)$:
\begin{enumerate}
\item $c_0 = \eta_{\tilde{X}}(o(c_0), t(c_0))$.
\item If $\epsilon_i = 1$ then $c_i = \eta_{\tilde{B}}(o(c_i), t(c_i))$. Furthermore, if $e$ is the first edge $c_i$ traverses, then $t*e$ does not form a corner of a vertical square.
\item If $\epsilon_i = -1$ then $c_i = \eta_{\tilde{A}}(o(c_i), t(c_i))$. Furthermore, if $e$ is the first edge $c_i$ traverses, then $t^{-1}*e$ does not form a corner of a vertical square.
\end{enumerate}
Our definition of graph of spaces normal form and the following theorem are the topological translations of the algebraic definition and the normal form theorem \cite[p. 181]{lyn_00}.

\begin{theorem}
\label{gos_normal_form}
If $\eta_{\tilde{A}}, \eta_{\tilde{B}}$ and $\eta_{\tilde{X}}$ are as above, there is a unique graph of spaces normal form $\eta$ induced by $(\{\eta_{\tilde{X}}\}, \{\eta_{\tilde{A}}, \eta_{\tilde{B}}\})$. Moreover, the following are satisfied:
\begin{enumerate}
\item $\eta$ is $\pi_1(X_{\mathcal{X}})$-equivariant,
\item the action of $\pi_1(X)$ on $\eta$ and $\eta_{\tilde{X}}$ coincide,
\item if $\eta_{\tilde{X}}$, $\eta_{\tilde{A}}$ and $\eta_{\tilde{B}}$ are prefix-closed, then so is $\eta$.
\end{enumerate}
\end{theorem}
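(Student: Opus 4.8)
The statement to prove is Theorem~\ref{gos_normal_form}: existence and uniqueness of the graph of spaces normal form $\eta$ induced by $(\{\eta_{\tilde{X}}\}, \{\eta_{\tilde{A}}, \eta_{\tilde{B}}\})$, together with equivariance, compatibility of the $\pi_1(X)$-actions, and preservation of the prefix-closed property. The approach is to mirror the classical normal form theorem for HNN-extensions (Britton's lemma and the normal form theorem of \cite[Chapter IV, Section 2]{lyn_00}), translated to the language of paths in $X_{\tilde{\mathcal{X}}}$. First I would fix two vertices $p, q \in X_{\tilde{\mathcal{X}}}^{(0)}$; these lie in some copies $\tilde{X}_p$ and $\tilde{X}_q$ of the universal cover $\tilde{X}$, and there is a unique geodesic in the Bass--Serre tree $T$ joining the vertices $\nu(p)$ and $\nu(q)$, say of combinatorial length $n$. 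This $n$ is forced: any reduced path from $p$ to $q$ must have exactly this many vertical letters, since $\nu \circ c$ immersed means $\nu \circ c$ is the tree geodesic. So I would induct on $n$.

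\textbf{Existence.} For $n = 0$, set $\eta_{p,q} = \eta_{\tilde{X}}(p, q)$; condition (1) is satisfied and conditions (2), (3) are vacuous. For the inductive step, write the tree geodesic as passing first through an edge $\tilde{e}$ of $T$ incident to $\nu(p)$; this edge corresponds to a lift $\tilde{C}'$ of $\tilde{C}$ with edge maps $\tilde\partial^{\pm}$ into $\tilde{X}_p$ and a neighbouring copy $\tilde{X}'$. Depending on the orientation, the relevant boundary subcomplex in $\tilde{X}_p$ is a translate of $\tilde{A}$ or $\tilde{B}$; suppose it is $\tilde{A}$, so $\epsilon_1 = -1$ (the other case is symmetric). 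Using the normal form $\eta_{\tilde{A}}$ relative to $\tilde A$, I would first travel along $\eta_{\tilde{A}}(p, p')$ to the attaching point $p'$ of the vertical edge, cross the vertical edge via $t^{-1}$, landing at a vertex $p''$ in $\tilde{X}'$, and then apply the inductive hypothesis to obtain a normal form path from $p''$ to $q$ in the graph of spaces over the subtree starting at $\nu(p'')$, which is one shorter. The only subtlety is that the choice of attaching point $p'$ must be made so that condition~(2)/(3) is met: if the first edge of the remaining path would form a corner of a vertical square with $t^{-1}$, one pushes the square, replacing $t^{-1}*e$ by the opposing corner; since the vertical homotopy through a square changes the path on the $\tilde{X}$-side, I would absorb this into the choice of $p'$ and re-apply $\eta_{\tilde{A}}$. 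The relative normal form property~(\ref{itm:relative_normal}) of $\eta_{\tilde{A}}$ is exactly what guarantees that this rechoice keeps the $\eta_{\tilde A}$-part a genuine $\eta_{\tilde A}$-normal form path (the detour $\eta_{p,r}*\bar\eta_{q',r}$ lands in $\tilde A$ after removing backtracking). This process terminates because each push strictly decreases the length of the path on the vertex-space side where the square occurred, or because there are only finitely many vertical squares adjacent to the first vertical edge.

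\textbf{Uniqueness, equivariance, prefix-closedness.} Uniqueness follows because each successive choice in the construction above is forced: $c_0$ is determined by condition~(1); the first vertical letter $\epsilon_1$ is determined by the tree geodesic; the point $p'$ where $c_0$ meets the vertical edge is determined by requiring $c_0 = \eta_{\tilde X}(o(c_0), t(c_0))$ \emph{and} the no-corner condition after the vertical letter; and then the tail is determined by induction. For equivariance, I would note that $\pi_1(X_{\mathcal{X}})$ permutes the copies of $\tilde X$, $\tilde A$, $\tilde B$, $\tilde C$ and the vertical squares, and since $\eta_{\tilde X}$, $\eta_{\tilde A}$, $\eta_{\tilde B}$ are $\pi_1(X)$-equivariant, the whole construction is $\pi_1(X_{\mathcal{X}})$-equivariant; by uniqueness, $g \cdot \eta_{p,q} = \eta_{g p, g q}$. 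That the action of $\pi_1(X)$ on $\eta$ and $\eta_{\tilde X}$ coincide is immediate from the fact that $\pi_1(X) < \pi_1(X_{\mathcal{X}})$ stabilises the chosen copy of $\tilde X$ and acts on $c_0$ via $\eta_{\tilde X}$. Finally, prefix-closedness: given $p, q$ and $i \in [0, \abs{\eta_{p,q}}]$ with $r = \eta_{p,q}(i)$, either $r$ lies in the first block $c_0$, in which case prefix-closedness of $\eta_{\tilde X}$ finishes it; or $r$ lies past the first vertical letter, in which case I apply the inductive hypothesis together with prefix-closedness of $\eta_{\tilde A}$ (resp.\ $\eta_{\tilde B}$) for the initial block.

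\textbf{Main obstacle.} The delicate point is verifying the ``no-corner'' conditions (2) and (3) can always be arranged simultaneously with the constraint that the vertex-space blocks are $\eta_{\tilde X}$-, $\eta_{\tilde A}$-, or $\eta_{\tilde B}$-normal form paths, and that the square-pushing procedure needed to arrange this terminates and is consistent. This is precisely the topological incarnation of the reduction step in Britton's lemma, and the relative normal form property of $\eta_{\tilde A}, \eta_{\tilde B}$ (specifically condition~(\ref{itm:relative_normal}) in the definition) is the hypothesis engineered to make this go through; I expect the bulk of the careful argument to be a finite case analysis showing that after pushing all squares adjacent to a given vertical edge, no new squares are created at that edge, so the procedure stabilises.
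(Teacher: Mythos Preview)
The paper does not give a proof of this theorem; immediately before the statement it remarks that the definition and theorem are ``the topological translations of the algebraic definition and normal form Theorem \cite[Chapter IV, Section 2]{lyn_00}'' and leaves it at that. Your plan---translating the classical HNN normal form theorem into the language of paths in $X_{\tilde{\mathcal{X}}}$---is therefore exactly the intended approach, and the overall architecture (induction on the tree-distance $n$; equivariance and prefix-closedness following from uniqueness) is sound.

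There is, however, a muddle in your inductive step. The definition requires $c_0 = \eta_{\tilde{X}}(o(c_0), t(c_0))$; the relative normal forms $\eta_{\tilde{A}}, \eta_{\tilde{B}}$ are only used for the blocks $c_i$ with $i \geq 1$. Your inductive step instead writes $c_0 = \eta_{\tilde{A}}(p, p')$. Relatedly, if $\epsilon_1 = -1$ then (since vertical edges are oriented from $A$ to $B$) the point $p' = t(c_0)$ lies in a copy of $\tilde{B}$, not $\tilde{A}$; it is the landing point $p''$ that lies in $\tilde{A}$, and it is $c_1 = \eta_{\tilde{A}}(p'', \cdot)$ that must satisfy the no-corner condition. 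Your induction also does not recurse cleanly as written: the inductively-obtained normal form from $p''$ to $q$ would again have its first block in $\eta_{\tilde{X}}$-form rather than $\eta_{\tilde{A}}$-form, so prepending $c_0 * t^{\epsilon_1}$ does not produce a path satisfying the definition.

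The cleaner way to organise this---and the one implicit in the classical proof---is to work from $q$ backwards. The no-corner condition on $c_n$ together with the relative property~(\ref{itm:relative_normal}) of $\eta_{\tilde{A}}$ (resp.\ $\eta_{\tilde{B}}$) pins down the point $o(c_n)$ as the unique ``exit point'' of the relevant edge-space component determined by $q$; this in turn fixes $t(c_{n-1})$ across the vertical edge, and one recurses to build $c_0 * t^{\epsilon_1} * \cdots * c_{n-1}$, finally setting $c_0 = \eta_{\tilde{X}}(p, t(c_0))$. With this orientation the square-pushing procedure you worry about in your ``main obstacle'' paragraph becomes unnecessary: each $o(c_i)$ is determined outright, termination is automatic, and uniqueness is immediate.
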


Before moving on, we shall provide a brief sketch of the proof of Theorem \ref{gos_normal_form}.

Given existence and uniqueness of graph of spaces normal forms, the fact that they must be prefix closed provided $\eta_{\tilde{X}}, \eta_{\tilde{A}}$ and $\eta_{\tilde{B}}$ are can be shown directly from the definition of graph of spaces normal forms. Indeed, any prefix of a path satisfying the three conditions, will also satisfy the three conditions. Similarly for the equivariance claims.

The existence of normal forms can be shown by choosing paths $c = c_0*t^{\epsilon_1}*c_1*\ldots *t^{\epsilon_n}*c_n$ between each pair of points and putting them into normal form as follows. If $n = 0$, then $\eta(o(c), t(c)) = \eta_{\tilde{X}}(o(c), t(c))$. If $n\geqslant 1$ and $\epsilon_n = 1$, then replace $c_n$ with $\eta_{\tilde{B}}(o(c_n), t(c_n)) = b_n*c_n'$, where $b_n$ is the largest prefix contained in the same copy of $\tilde{B}$ that $o(c_n)$ was in. Then perform a sequence of vertical homotopies through vertical squares, replacing $t^{\epsilon_n}*b_n$ with $a_n*t^{\epsilon_n}$, where $a_n$ lies in the same copy of $\tilde{A}$ that $o(t^{\epsilon_n})$ was in. Then $\eta(o(c), t(c))$ can be defined inductively on $n$ by setting $\eta(o(c), t(c)) = \eta(o(c'), t(c'))*c_n'$, where $c' = c_0*t^{\epsilon_1}*c_1*\ldots*t^{\epsilon_{n-1}}*c_{n-1}*a_n$, after possibly removing backtracking. The case where $\epsilon_n = -1$ is handled similarly. The fact that these are graph of spaces normal forms is by construction. This procedure will be important for the proof of Theorem \ref{hyperbolic_normal_forms}.

Now suppose that $c = c_0*t^{\epsilon_1}*\ldots *t^{\epsilon_n}*c_n$ and $c' = c_0'*t^{\eta_1}*\ldots *t^{\eta_m}*c'_m$ are two paths in normal form with the same origin and target vertices. Since the underlying graph of the graph of spaces $\tilde{\mathcal{X}}$ is a tree, the Bass--Serre tree of the graph of groups induced by $\mathcal{X}$, it follows that $n = m$ and $\epsilon_i = \eta_i$ for all $i$. Moreover, it also follows that $c_n*\bar{c}_n'$ must be equal if $n = 0$ or path homotopic into a copy of $\tilde{B}$ if $\epsilon_n = 1$ or $\tilde{A}$ if $\epsilon_n = -1$. In the second case, since $c_n = \eta_{\tilde{B}}(o(c_n), t(c_n))$ and $c_n' = \eta_{\tilde{B}}(o(c_n'), t(c_n'))$, the definition of relative normal forms tells us that $c_n*\bar{c}_n'$ lies in $\tilde{B}$ after removing backtracking. Hence $c_n = c_n'$ otherwise the first edge of $c_n$ or $c_n'$ would form the corner of a vertical square together with $t^{\epsilon_n}$. The first case is handled similarly and so $c$ and $c'$ are equal as paths.

\subsection{Quasi-geodesic normal forms for graphs of hyperbolic spaces}

The definition of quasi-geodesics may be generalised in the following way. Let $X$ be a metric space and $f:\R_{\geq0}\to \R_{\geq 0}$ be a monotonic increasing function. A path $c:I\to X$, parametrised by arc length, is an \emph{$f$-quasi-geodesic} if, for all $0\leq p \leq q\leq \abs{c}$, the following is satisfied:
\[
q - p\leq f(d(c(p), c(q)))\cdot d(c(p), c(q)) + f(d(c(p), c(q)))
\]
If $f$ is bounded above by a constant $K$, then $c$ is simply a $K$-quasi-geodesic.

\begin{theorem}
\label{subexponential_distortion}
Let $X$ be a geodesic hyperbolic metric space and $f:\R_{\geq 0}\to \R_{\geq0}$ a subexponential function. There is a constant $K(f)$ such that all $f$-quasi-geodesics are $K(f)$-quasi-geodesics.
\end{theorem}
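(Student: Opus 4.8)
The plan is to run the usual Morse stability argument for quasi-geodesics in hyperbolic spaces, but with the role of ``linear vs.\ exponential'' played by ``subexponential vs.\ exponential''. Fix a $\delta$-hyperbolic geodesic space $X$ and an $f$-quasi-geodesic $c\colon I\to X$ parametrised by arc length, with endpoints $x=c(0)$, $y=c(L)$, and let $\gamma$ be a geodesic from $x$ to $y$, so $|\gamma|=d:=d(x,y)$. First I record two reductions. Any sub-arc of $c$ is again an $f$-quasi-geodesic (the defining inequality is inherited), so it suffices to produce $K=K(f,\delta)$ with $L\le Kd+K$; applied to every sub-arc this is exactly the assertion that $c$ is a $K$-quasi-geodesic. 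Second, a sub-arc of $c$ of length at most $n$ is an ordinary $(f(n),f(n))$-quasi-geodesic: for $p\le q$ in it, $q-p\le f(d(c(p),c(q)))\bigl(d(c(p),c(q))+1\bigr)$ and $d(c(p),c(q))\le q-p\le n$ forces $f(d(c(p),c(q)))\le f(n)$ by monotonicity of $f$.

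\emph{Step 1: the image stays boundedly close to $\gamma$.} Let $D_0=\sup_t d(c(t),\gamma)$, attained at some $c(t_0)$, and let $c|_{[a,b]}$ be a maximal sub-arc containing $t_0$ along which $d(c(s),\gamma)\ge D_0/2$. Projecting $c(a),c(b)$ onto $\gamma$ and using thinness of triangles, the concatenation of these two projection segments with $c|_{[a,b]}$ is a path between two points of $\gamma$ that avoids a ball of radius $D_0/2-O(\delta)$ centred on $\gamma$; the standard exponential-divergence estimate (a path joining two points of a geodesic and avoiding the $R$-ball about a point between them has length at least $2^{R/\delta-O(1)}$) then gives $b-a\ge 2^{D_0/(C\delta)}-C$, and the same construction bounds $d(c(a),c(b))\le C'(D_0+\delta)$. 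On the other hand the $f$-quasi-geodesic inequality on $[a,b]$ gives $b-a\le f\bigl(C'(D_0+\delta)\bigr)\bigl(C'(D_0+\delta)+1\bigr)$. Since $f$ is subexponential, the right-hand side is eventually dominated by $2^{D_0/(C\delta)}$, so $D_0\le D_1$ for a constant $D_1=D_1(f,\delta)$; hence $\Ima(c)\subseteq N_{D_1}(\gamma)$.

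\emph{Step 2: no backtracking along $\gamma$.} For each $s$ pick $\rho(s)\in[0,d]$ with $d\bigl(c(s),\gamma(\rho(s))\bigr)\le D_1$, so $|\rho(s)-\rho(s')|\le|s-s'|+2D_1$. I claim there is $M_1=M_1(f,\delta)$ with $\rho(s_2)\ge\min(\rho(s_1),\rho(s_3))-M_1$ whenever $s_1\le s_2\le s_3$. If this fails badly, one extracts times $s_1'\le s_2\le s_3'$ with $d(c(s_1'),c(s_3'))$ bounded by an absolute multiple of $D_1$, yet $s_3'-s_1'\ge s_2-s_1'\ge M_1/C''-C''D_1$ (the path must descend to level $\rho(s_2)$ and climb back); the $f$-quasi-geodesic inequality on $[s_1',s_3']$ then bounds $M_1$ in terms of $f(O(D_1))$ and $D_1$, a contradiction for $M_1$ large. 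Thus $\rho\circ c$ is coarsely monotone.

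\emph{Step 3: counting.} Partition $[0,d]$ into $n\le d/(2M_1+1)+1$ intervals of length $\le 2M_1+1$. Using coarse monotonicity of $\rho\circ c$ and $\Ima(c)\subseteq N_{D_1}(\gamma)$, choose $0=s_0\le s_1\le\cdots\le s_n=L$ so that $d(c(s_j),c(s_{j+1}))\le M_2=M_2(f,\delta)$ for all $j$. Applying the $f$-quasi-geodesic inequality to each $[s_j,s_{j+1}]$ and summing, $L=\sum_j(s_{j+1}-s_j)\le n\,f(M_2)(M_2+1)\le Kd+K$ for $K=K(f,\delta)$, as required. The main obstacle is Step 1: one must make the exponential-divergence estimate quantitatively precise, with explicit control of the additive and multiplicative $\delta$-errors in both the length lower bound and the bound on $d(c(a),c(b))$, and then verify that subexponential growth of $f$ genuinely beats exponential growth in $D_0$ — this is the only place the hypothesis on $f$ is used, and it is exactly where an exponential $f$ would break the argument. (An alternative is to combine the second reduction above with a local-to-global principle for quasi-geodesics in hyperbolic spaces whose locality scale grows at most polynomially in the quasi-geodesity constants, since then subexponentiality of $f$ makes the required scale available; I would present the direct argument above as it keeps the dependence on $f$ transparent.)
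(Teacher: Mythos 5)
The paper does not give a direct argument at all: it simply deduces the theorem from \cite[Corollary 7.1.B]{gromov_87}. Your proposal instead re-runs the Morse stability argument from scratch, with subexponentiality of $f$ playing the role that the linear bound $\lambda\cdot\,+\,\epsilon$ plays in the classical $(\lambda,\epsilon)$ case. That is the right idea, and your Steps 2 and 3 (coarse monotonicity of the projection, then chopping $\gamma$ into pieces of size $O(M_1)$ and summing the $f$-quasi-geodesic bound) are sound once Step 1 is in place. However, Step 1 as written has a genuine gap, and it is the load-bearing step.

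The problem is the direction in which you bound the Hausdorff distance. You set $D_0=\sup_t d(c(t),\gamma)$, take a maximal sub-arc $c|_{[a,b]}$ on which $d(c(s),\gamma)\ge D_0/2$, let $a',b'\in\gamma$ be the projections of $c(a),c(b)$, and assert that the concatenation $\sigma=[a',c(a)]*c|_{[a,b]}*[c(b),b']$ ``avoids a ball of radius $D_0/2-O(\delta)$ centred on $\gamma$.'' This is not justified: the two projection segments touch $\gamma$ at $a'$ and $b'$, so the only candidate centres $w\in\gamma$ are those with $d(w,a'),d(w,b')\gtrsim D_0$, and since the divergence lemma (Bridson--Haefliger III.H.1.6) requires $w$ to lie on the geodesic $[a',b']$, one needs $d(a',b')\gtrsim 2D_0$. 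Nothing forces this. If $d(a',b')$ is small, one gets $d(c(a),c(b))\le 3D_0$ and hence only $b-a\le f(3D_0)(3D_0+1)$, while the only lower bound available is $b-a\ge D_0$; for subexponential (indeed even constant) $f$ this is no contradiction, so $D_0$ is not bounded by your argument. The standard fix, which transports to the $f$-quasi-geodesic setting without change, is to reverse the direction, exactly as in Bridson--Haefliger III.H.1.7: first bound $D:=\sup_{x\in\gamma}d(x,\Ima(c))$ by taking $x_0$ realising the sup, $y,z\in\gamma$ at distance $2D$ on either side, and $c(s),c(t)$ within $D$ of $y,z$. The path $[y,c(s)]*c|_{[s,t]}*[c(t),z]$ now genuinely avoids $B(x_0,D)$ (because every point of $\Ima(c)$ is at distance $\ge D$ from $x_0$ by the definition of $D$, and the projection legs stay $D$-close to $y,z$ which are $2D$ from $x_0$), and its length is at most $2D+f(6D)(6D+1)$; the divergence lemma then gives $D\le\delta\log_2\bigl(2D+f(6D)(6D+1)\bigr)+1$, which bounds $D$ precisely because $f$ is subexponential. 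From $\gamma\subset N_{D}(\Ima(c))$ one then deduces $\Ima(c)\subset N_{D_1}(\gamma)$ by the connectedness trick together with one more application of the $f$-quasi-geodesic inequality. With Step 1 repaired this way, your Steps 2 and 3 go through and yield the stated $K(f)$.
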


\begin{proof}
Follows from \cite[Corollary 7.1.B]{gromov_87}. Alternatively, a slight modification of the proof of the Morse lemma, see for example \cite[Proposition 3.3]{alonso_91}, replacing quasi-geodesics with $f$-quasi-geodesics, yields that there is a constant $K'(f)$ such that $f$-quasi-geodesics and geodesics lie in the $K'$-neighbourhoods of each other. So now let $\gamma$ be a geodesic and let $c$ be an $f$-quasi-geodesic with the same endpoints. For each positive integer $i\leqslant |\gamma|$, there is some $j_i\leqslant |c|$ such that $d(\gamma(i), c(j_i))\leqslant K'$ and so $d(c(j_i), c(j_{i+1}))\leqslant 2K' + 1$. Hence, we have $j_{i+1} - j_i\leqslant f(2K' + 1)\cdot (2K'+1) + f(2K'+1)$ for all $i\leqslant |\gamma|$. Thus, setting $K(f) = f(2K'+1)\cdot (2K'+2)$ yields that $|c|\leqslant K(f)\cdot d(o(c), t(c))$ and thus, the result.
\end{proof}

It follows from Theorem \ref{subexponential_distortion} that there is a gap in the spectrum of possible distortion functions of subgroups of hyperbolic groups. This is known by work of Gromov \cite{gromov_87} and appears as \cite[Proposition 2.1]{kapovich_01}.

\begin{corollary}
Subexponentially distorted subgroups of hyperbolic groups are undistorted and hence, quasi-convex.
\end{corollary}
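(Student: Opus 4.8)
The plan is to deduce this directly from Theorem \ref{subexponential_distortion} together with the stability of quasi-geodesics (the Morse lemma) in hyperbolic spaces. Fix a hyperbolic group $G$ with finite generating set $S$ and a finitely generated subgroup $H<G$ with finite generating set $S'$, and suppose the distortion function $\mathrm{Dist}^G_H(n) = \max\{\abs{h}_{S'} : h\in H,\ \abs{h}_S\leq n\}$ is subexponential. Since we may always replace it by the monotonic increasing subexponential function $t\mapsto \sup_{s\leq t}\mathrm{Dist}^G_H(\lceil s\rceil)$, we assume from the outset that $f := \mathrm{Dist}^G_H$ satisfies the hypotheses of Theorem \ref{subexponential_distortion}.

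First I would observe that every geodesic of $\cay(H, S')$ maps to an $f$-quasi-geodesic of $\cay(G, S)$. Indeed, if $c$ is such a geodesic and $0\leq p\leq q\leq \abs{c}$, then $c\mid_{[p, q]}$ is again an $H$-geodesic, so $q - p = \abs{c(p)^{-1}c(q)}_{S'}\leq f(\abs{c(p)^{-1}c(q)}_S) = f(d(c(p), c(q)))$, which (as $f$ is increasing and the defining inequality for an $f$-quasi-geodesic carries an extra additive term) is the required estimate. Note that $\cay(G, S)$ is a geodesic metric space which is hyperbolic precisely because $G$ is.

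Next I would apply Theorem \ref{subexponential_distortion} to obtain a constant $K = K(f)$, depending only on $f$, such that every $f$-quasi-geodesic of $\cay(G, S)$ is a $K$-quasi-geodesic. Combined with the previous step, this says that every $H$-geodesic maps to a $K$-quasi-geodesic of $\cay(G, S)$ with $K$ independent of the geodesic; equivalently, $H\injects G$ is a quasi-isometric embedding, i.e.\ $H$ is undistorted.

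Finally, for the "hence quasi-convex" I would invoke the Morse lemma: given $h_1, h_2\in H$, pick an $H$-geodesic between them and transport it to a $K$-quasi-geodesic of $\cay(G, S)$ whose image lies in $H$; by stability of quasi-geodesics in a $\delta$-hyperbolic space \cite{gromov_87}, any $\cay(G, S)$-geodesic from $h_1$ to $h_2$ is contained in the $R$-neighbourhood of this quasi-geodesic, where $R = R(\delta, K)$ depends only on $\delta$ and $K$. Hence every geodesic between two points of $H$ stays $R$-close to $H$, which is exactly $R$-quasi-convexity. There is no real obstacle here; the only point requiring a moment's care is the normalisation of $f$ in the first paragraph so that Theorem \ref{subexponential_distortion} applies verbatim, after which the argument is a routine unwinding of definitions plus the Morse lemma.
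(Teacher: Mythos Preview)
Your proof is correct and follows exactly the approach the paper intends: the paper states the corollary as an immediate consequence of Theorem \ref{subexponential_distortion} (citing Gromov and \cite[Proposition 2.1]{kapovich_01} rather than writing out the argument), and you have simply unpacked that deduction---showing $H$-geodesics are $f$-quasi-geodesics, invoking Theorem \ref{subexponential_distortion}, and then the Morse lemma---in full. The only implicit convention worth noting is that you are tacitly taking $S'\subset S$ so that $H$-geodesics are paths of the same length in $\cay(G,S)$ parametrised by arc length; this is harmless and standard.
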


The following result can be thought of as a strengthening of \cite[Corollary 1.1]{kapovich_01}. We make use of Theorem \ref{subexponential_distortion}.

\begin{theorem}
\label{hyperbolic_normal_forms}
Let $\mathcal{X} = (\Gamma, \{X\}, \{C\}, \{\partial^{\pm}\})$ be a graph of spaces with $\Gamma$ consisting of a single vertex and edge. Let $\eta_{\tilde{A}}:\tilde{X}^{(0)}\times\tilde{X}^{(0)}\to I(\tilde{X})$, $\eta_{\tilde{B}}:\tilde{X}^{(0)}\times\tilde{X}^{(0)}\to I(\tilde{X})$ and $\eta_{\tilde{X}}:\tilde{X}^{(0)}\times\tilde{X}^{(0)}\to I(\tilde{X})$ be $\pi_1(X)$-equivariant normal forms, with $\eta_{\tilde{A}}$ relative to $\tilde{A}$ and $\eta_{\tilde{B}}$ relative to $\tilde{B}$. Suppose that the following are satisfied:
\begin{enumerate}
\item $\tilde{X}$ is hyperbolic,
\item $\pi_1(X_{\mathcal{X}})$ acts acylindrically on the Bass-Serre tree $T$,
\item $\eta_{\tilde{A}}$, $\eta_{\tilde{B}}$ and $\eta_{\tilde{X}}$ are prefix-closed quasi-geodesic normal forms.
\end{enumerate}
Then the graph of spaces normal form induced by $\left(\{\eta_{\tilde{X}}\}, \{\eta_{\tilde{A}}, \eta_{\tilde{B}}\}\right)$:
\[
\eta:\tilde{X}^{(0)}_{\mathcal{X}}\times\tilde{X}^{(0)}_{\mathcal{X}}\to I(\tilde{X}_{\mathcal{X}})
\]
is prefix-closed and quasi-geodesic.
\end{theorem}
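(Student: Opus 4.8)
The prefix-closed assertion needs no work: it is exactly conclusion (3) of Theorem \ref{gos_normal_form}, since $\eta_{\tilde A}$, $\eta_{\tilde B}$ and $\eta_{\tilde X}$ are assumed prefix-closed and $\eta$ is the graph of spaces normal form induced by them. So the whole content is the quasi-geodesity of $\eta$, and the plan is to exhibit a subexponential function $f$ such that every path in $\Ima(\eta)$ is an $f$-quasi-geodesic in $\tilde X_{\mathcal{X}}^{(1)}$ and then invoke Theorem \ref{subexponential_distortion} to upgrade this to a genuine quasi-geodesic bound.

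First I would upgrade the hypotheses. The existence of the quasi-geodesic normal forms $\eta_{\tilde A}$, $\eta_{\tilde B}$ relative to $\tilde A$, $\tilde B$ forces $\pi_1(A)$ and $\pi_1(B)$, hence $\pi_1(C)$, to be undistorted in $\pi_1(X)$, so the edge maps $\partial^{\pm}$ are quasi-isometric embeddings. Combined with hyperbolicity of $\tilde X$ and acylindricity of the action on the Bass--Serre tree $T$, Theorem \ref{kapo_main} then yields two things at once: $\tilde X_{\mathcal{X}}^{(1)}$ is $\delta$-hyperbolic for some $\delta$, and each vertex space $\tilde X_v^{(1)}$ — and therefore each edge space — is quasi-isometrically embedded in $\tilde X_{\mathcal{X}}^{(1)}$. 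In particular each factor $c_i$ of a normal form path $c = c_0*t^{\epsilon_1}*c_1*\cdots*t^{\epsilon_n}*c_n \in \Ima(\eta)$ is, being a subpath of one of $\eta_{\tilde A}$, $\eta_{\tilde B}$, $\eta_{\tilde X}$ and hence a $K$-quasi-geodesic in its vertex space, a $K'$-quasi-geodesic in $\tilde X_{\mathcal{X}}^{(1)}$ for a uniform $K'$.

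The core step is a comparison of $c$ with a geodesic $\gamma$ in $\tilde X_{\mathcal{X}}^{(1)}$ with the same endpoints; set $d = |\gamma|$. Since $c$ is reduced, its vertical projection $\nu\circ c$ is an immersed path, hence a geodesic, in the tree $T$; as $T$ is a tree, $\gamma$ must cross every edge of this geodesic, so (choosing $\gamma$ to cross as few edge spaces as possible) $c$ and $\gamma$ traverse the same edge spaces $\hat E_1,\dots,\hat E_n$ in the same order, $c$ crossing $\hat E_i$ at a point $q_i$ and $\gamma$ at a point $p_i$. Cutting both paths at these crossings and using the quasi-isometric embeddings of the vertex spaces, one gets $d \asymp n + \sum_i d_{\tilde X_v}(p_i,p_{i+1})$ and $|c| = n + \sum_i |c_i| \asymp n + \sum_i d_{\tilde X_v}(q_i,q_{i+1})$, so that $|c| - d$ is controlled by $\sum_i \ell_i$, where $\ell_i := d_{\hat E_i}(p_i,q_i)$ measures how far $c$ and $\gamma$ track each other across the $i$-th edge space. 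The remaining, and in my view main, point is to bound $\sum_i \ell_i$ by a subexponential — in fact we expect linear — function of $d$. This is where acylindricity is used, through a flaring argument in the spirit of the Bestvina--Feighn combination theorem: if $\ell_i$ were large on a long block of consecutive indices, then the strip between $c$ and $\gamma$ over that block, together with $\delta$-hyperbolicity of $\tilde X_{\mathcal{X}}^{(1)}$ and the quasiconvexity of the edge spaces, would produce an element of $\pi_1(X_{\mathcal{X}})$ pointwise stabilising an arbitrarily long segment of $T$, contradicting $k$-acylindricity. I expect this flaring estimate to be the principal obstacle; the rest is bookkeeping with quasi-isometry constants.

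With $|c| \le f(d)\,d + f(d)$ established for all $c\in\Ima(\eta)$ and subexponential $f$, the general $f$-quasi-geodesic inequality follows by running the same comparison on subpaths of $c$: any subpath is still reduced, its factors are still uniform quasi-geodesics in their vertex spaces (subpaths of quasi-geodesics are quasi-geodesics), and it still crosses a tree geodesic, so the estimate of the previous paragraph applies up to a bounded correction caused by truncating the corner conditions at the two ends. Theorem \ref{subexponential_distortion} then promotes every path in $\Ima(\eta)$ to a $K(f)$-quasi-geodesic, so $\eta$ is quasi-geodesic. Together with the prefix-closed statement from Theorem \ref{gos_normal_form}, this proves the theorem.
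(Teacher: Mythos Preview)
Your high-level architecture matches the paper's: invoke Theorem~\ref{kapo_main} to get hyperbolicity of $\tilde X_{\mathcal X}$ and quasi-convexity of the vertex spaces, establish a subexponential bound on $|\eta_{x,y}|$ in terms of $d(x,y)$, extend to subpaths via prefix-closedness, and finish with Theorem~\ref{subexponential_distortion}. Prefix-closure via Theorem~\ref{gos_normal_form}(3) is also what the paper does.

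The gap is in your core estimate. You propose to control $\sum_i \ell_i$ by a flaring argument ``from acylindricity'', and you yourself flag this as the principal obstacle --- rightly, because nothing in the sketch produces the group element that would stabilise a long segment of $T$; a long thin strip between two \emph{specific} paths $c$ and $\gamma$ is not by itself a witness against $k$-acylindricity. The paper avoids this entirely. After Theorem~\ref{kapo_main} is invoked, acylindricity is not used again; instead the paper exploits a structural feature you do not touch, namely that $\eta_{\tilde A}$ and $\eta_{\tilde B}$ are normal forms \emph{relative to} $\tilde A$ and $\tilde B$. Concretely, the last horizontal segment of $\eta_{x,p}$ factors as $a*c$ with $a$ supported in the appropriate edge-space lift, so $t^{\pm1}*a$ can be pushed through vertical squares into the previous vertex space; a single thin-triangle/Morse estimate in the already-hyperbolic $\tilde X_{\mathcal X}$ then locates the new endpoint within $L=\delta+M+1$ of the geodesic $\gamma$, and induction on the number $k$ of $t$-crossings of $\gamma$ yields the explicit bound $|\eta_{x,p}|\le K(k{+}1)(kL+|\gamma|+|\alpha|)$, hence a \emph{quadratic} $f$, not the linear one you anticipate. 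Your subpath step is also looser than the paper's: a subpath $h'$ of a normal form need not itself be a normal form (the truncated first piece may fail both the corner condition and membership in $\Ima(\eta_{\tilde X})$), so ``the estimate of the previous paragraph applies'' is not immediate. The paper uses prefix-closedness to show that $\eta_{o(h'),t(h')}$ and $h'$ agree except in their first horizontal factor, and then bounds $|h'|\le K\,|\eta_{o(h'),t(h')}|$ using the vertex-space quasi-geodesic constant before feeding this into Theorem~\ref{subexponential_distortion}.
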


Under the same hypothesis, Kapovich showed that for each pair of points in $X_{\mathcal{X}}$, there exists a reduced quasi-geodesic path connecting them. Theorem \ref{hyperbolic_normal_forms} shows that graph of spaces normal forms provide us with such paths. This fact will be key for the proof of our main theorems.

\begin{proof}[Proof of Theorem \ref{hyperbolic_normal_forms}]
The fact that $\eta$ is prefix-closed is Theorem \ref{gos_normal_form}. By Theorem \ref{kapo_main}, we have that $\tilde{X}_{\mathcal{X}}$ is $\delta$-hyperbolic and $\tilde{X}\injects X_{\tilde{\mathcal{X}}}$ is a quasi-isometric embedding. Thus, there is a constant $K\geq 0$ such that the images of $\eta_{\tilde{A}}$, $\eta_{\tilde{B}}$ and $\eta_{\tilde{X}}$ are $K$-quasi-geodesic in $\tilde{X}_{\mathcal{X}}$. Now let $x, y\in \tilde{X}_{\mathcal{X}}^{(0)}$ be any two points and $\gamma$ a geodesic connecting them through the 1-skeleton. We may factorise this geodesic
\[
\gamma = \gamma_0*t^{\epsilon_1}*\gamma_1*...*t^{\epsilon_k}*\gamma_k
\]
such that $\epsilon_i = \pm 1$, each $\gamma_i$ is path homotopic into a copy of $\tilde{X}$ and there is no subpath $\gamma_i*t^{\epsilon_{i+1}}*...*\gamma_{j}$ with $i<j$ that is path homotopic into some copy of $\tilde{X}$. If $\rho\in I(\tilde{X}_{\mathcal{X}})$, we will write $\tilde{X}_{\rho}$ to denote the copy of $\tilde{X}$ in $\tilde{X}_{\mathcal{X}}$ that $\rho$ ends in. When it makes sense, we will do the same for $\tilde{A}$ and $\tilde{B}$. This is well defined since all the $\pi_1(X_{\mathcal{X}})$ translates of $\tilde{A}$ are disjoint or equal and all the $\pi_1(X_{\mathcal{X}})$ translates of $\tilde{B}$ are disjoint or equal.

Let $p\in \tilde{X}^{(0)}_{\gamma}$, let $\alpha$ be a geodesic connecting $y$ with $p$ and let $\beta$ be a geodesic connecting $o(\gamma_k)$ with $p$. Denoting by $M = M(K, \delta)$ the maximal Hausdorff distance of all $\eta_{\tilde{X}}$, $\eta_{\tilde{A}}$ and $\eta_{\tilde{B}}$ normal forms from their respective geodesics in $\tilde{X}_{\mathcal{X}}$ and by $L = \delta + M$, we claim that 
\[
\abs{\eta(x, p)} \leq K(k+1)(kL + |\gamma| + |\alpha| + k+1).
\]

The proof of the claim is by induction on $k$. First suppose that $k = 0$. We will write $\simeq$ to mean path homotopic. Since $\beta \simeq \gamma_k* \alpha$ and $o(\beta), t(\beta)\in \tilde{X}_{\gamma}$, we get that $\eta(x, p) = \eta_{\tilde{X}}(x, p)$ is actually a $K$-quasi-geodesic path. This establishes the base case.

Now suppose it is true for all $i<k$. Suppose $\epsilon_k = -1$, the other case is the same. Let $\beta' = \eta_{\tilde{A}}(o(\gamma_k), p) = a* c$ where $a\subset \tilde{A}_{\gamma*\bar{\gamma}_k}$ and the first edge of $c$ is not in $\tilde{A}_{\gamma*\bar{\gamma}_k}$. Since $a$ is supported in a copy of $\tilde{A}$, the path $t^{-1}*a$ may be homotoped through vertical squares to a path $b*t^{-1}$, where $b$ is supported in $\tilde{B}_{\gamma*\bar{\gamma}_k*t}$. Let $\alpha'$ be a geodesic connecting $o(b)$ to $t(b)$. Since $a*c = \eta_{\tilde{A}}(o(a), t(c))$, the definition of relative normal forms implies that $c = \eta_{\tilde{A}}(o(c), t(c))$. Let $t(\alpha') = p'$. By uniqueness and the definition of graph of space normal forms, we have 
\begin{align}
\label{eq1}
\eta(x, p) &= \eta(x, p')* t^{-1}*c.
\end{align}
Letting $\gamma' = \gamma_0* t^{\epsilon_1}*...*\gamma_{k-1}$, the inductive hypothesis applied to $\gamma'$ and $\alpha'$ yields that
\begin{align}
\label{ineq1}
\abs{\eta(x, p')} &\leq Kk((k-1)L + |\gamma'| + |\alpha'| + k).
\end{align}
Applying (\ref{ineq1}) to (\ref{eq1}), we obtain
\begin{align}
\label{ineq2}
\abs{\eta(x,q)} &\leq Kk((k-1)L + \abs{\gamma'} + \abs{\alpha'} + k) + \abs{c} + 1
\end{align}
By assumption, $\beta'$ is a $K$-quasi-geodesic. Hence there is a point $q\in \beta$ such that $d(t(a), q)\leq M$. But then since $\beta\cup \alpha\cup \gamma_k$ forms a geodesic triangle, there is a point $z\in \alpha\cup \gamma_k$ such that $d(t(a), z)\leq L$. We may now divide into two subcases: either $z\in \alpha$ or $z\in \gamma_k$. 

\begin{figure}
\centering
\includegraphics[scale=0.7]{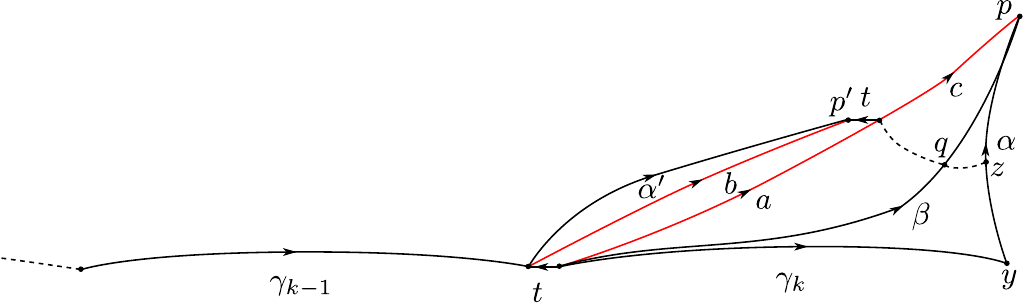}
\caption{Case 1: $z\in \alpha$.}
\label{case_1}
\end{figure}

First suppose $z$ is a vertex traversed by $\alpha$, see Figure \ref{case_1}. It follows that 
\begin{align*}
d(o(a), t(a)) &\leq \abs{\gamma_k} + L + \abs{\alpha}\\
d(o(c), t(c)) & \leq L + \abs{\alpha}
\end{align*}
and hence, since $c$ is a $K$-quasi-geodesic, we have
\begin{align}
\label{ineq3}
\abs{c} &\leq K(L + \abs{\alpha}) +K.
\end{align}
We also have 
\begin{align}
\label{ineq4}
\abs{\alpha'}&\leq d(o(a), t(a)) + 2 \leq \abs{\gamma_k} + L + \abs{\alpha} + 2.
\end{align}
In order, applying (\ref{ineq3}), (\ref{ineq4}) and the fact that $\abs{\gamma} = \abs{\gamma'} + 1 + \abs{\gamma_k}$ to (\ref{ineq2}), we obtain
\begin{align*}
\abs{\eta(x,q)} &\leq Kk((k-1)L + \abs{\gamma'} + \abs{\alpha'} + k) + K(L + \abs{\alpha}) + K + 1\\
		&\leq Kk((k-1)L + \abs{\gamma'} + \abs{\gamma_k} + L + \abs{\alpha} + k + 2) + K(L + \abs{\alpha}) + K + 1\\
		&\leq Kk(kL + \abs{\gamma} + \abs{\alpha} + k + 1) + K(L + \abs{\alpha}) + K + 1\\
		&\leq Kk(kL + \abs{\gamma} + \abs{\alpha} + k + 1) + K(L + \abs{\alpha}+ 2)\\
		&\leq K(k+1)(kL + \abs{\gamma} + \abs{\alpha} + k + 1).
\end{align*}

\begin{figure}
\centering
\includegraphics[scale=0.7]{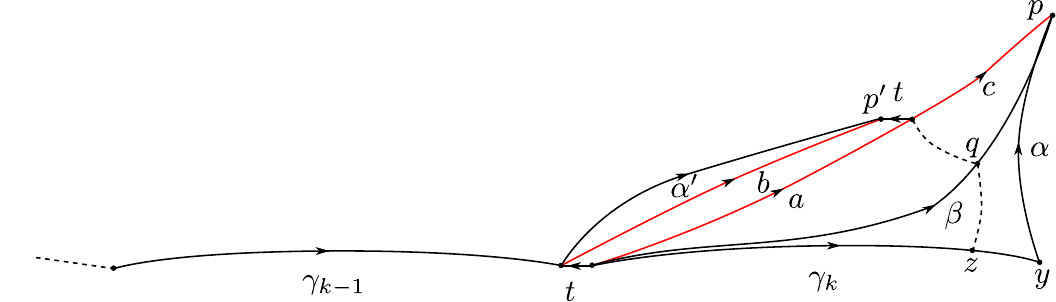}
\caption{Case 2: $z\in \gamma_k$.}
\label{case_2}
\end{figure}
Now we deal with the other case, see Figure \ref{case_2}. If $z$ is a vertex traversed by $\gamma_k$, then 
\begin{align*}
d(o(a), t(a)) &\leq d(o(\gamma_k), z) + L\\
d(o(c), t(c)) &\leq L + d(z, y) + \abs{\alpha}
\end{align*}
and hence, since $c$ is a $K$-quasi-geodesic, we have
\begin{align}
\label{ineq5}
\abs{c} &\leq K(L + d(z, y) + \abs{\alpha})+ K.
\end{align}
We have 
\begin{align}
\label{ineq6}
\abs{\alpha'} &\leq d(o(a), t(a)) + 2 \leq d(o(\gamma_k), z) + L + 2.
\end{align}
In order, applying (\ref{ineq5}) and (\ref{ineq6}) to (\ref{ineq1}) we obtain
\begin{align*}
\abs{\eta(x, p)} \leq& Kk((k-1) L + \abs{\gamma'} + \abs{\alpha'} + k)\\
& + K(L + d(z, q) + \abs{\alpha}) + K +1\\
\leq& Kk((k-1) L + \abs{\gamma'} + d(o(\gamma_k), z) + L + k + 2)\\
& + K(L + d(z, q) + \abs{\alpha}) + K +1\\
\leq& Kk(kL + \abs{\gamma'} + d(o(\gamma_k), z) + k + 2)\\
&+ K(L + d(z, q) + \abs{\alpha} + 2)
\end{align*}
and then using the fact that $|\gamma| = |\gamma'| + 1 + d(o(\gamma_k), z) + d(z, q)$, we obtain
\begin{align*}
\abs{\eta(x, p)} \leq& Kk(kL + \abs{\gamma} + \abs{\alpha} + k + 1) + K(L + \abs{\gamma} + \abs{\alpha} + k + 1)\\
\leq& K(k+1)(kL + \abs{\gamma} + \abs{\alpha} + k + 1)
\end{align*}
This concludes the proof of the claim.

Consider the polynomial function $f:\N\to \R$ given by
\[
f(n) = K(n+1)^{2}(L + 2).
\]
We have shown that for all $x, y\in \tilde{X}_{\mathcal{X}}^{(0)}$, we have $\abs{\eta(x, y)}\leq f(d(x, y))$. We now want to show that the graph of spaces normal forms are actually $f$-quasi-geodesics. 

Let $h = h_0* t^{\epsilon_1}* h_1*...* t^{\epsilon_k}* h_k\in \Ima(\eta)$ be a normal form and let $h' = h_i'* t^{\epsilon_{i+1}}* h_{i+1}*...* t^{\epsilon_j}* h_j'$ be a subpath of $h$. Then we have that
\[
\eta_{\tilde{X}}(o(h_i'), t(h_i'))*t^{\epsilon_{i+1}}* h_{i+1}*...* t^{\epsilon_j}* h_j'
\]
satisfies the three conditions of the definition of a graph of space normal form, where here we are using the fact that $\eta_{\tilde{A}}$ and $\eta_{\tilde{B}}$ are prefix closed to see that $h_j'$ is a normal form. Combining this with the uniqueness of graph of space normal forms we have
\[
\eta(o(h'), t(h')) = \eta_{\tilde{X}}(o(h_i'), t(h_i'))*t^{\epsilon_{i+1}}* h_{i+1}*...* t^{\epsilon_j}* h_j'.
\] 
We showed that:
\[
\abs{\eta(t(h'_i), t(h'))}\leq f(d(t(h_i'), t(h')))
\]
and so we know that 
\begin{align*}
|h'| &= \abs{h_i'} + \abs{\eta(t(h_i'), t(h'))}\\
	&\leq K d(o(h_i'), t(h_i')) + K + \abs{\eta(t(h_i'), t(h'))}\\
	&\leq K d(o(h_i'), t(h_i')) + K + f(d(t(h_i'), t(h')))\\
	&\leq f(d(o(h_i'), t(h_i'))) + f(d(t(h_i'), t(h')))\\
	&\leq f(d(o(h'), t(h'))).
\end{align*}
By Theorem \ref{subexponential_distortion}, it follows that there is some constant $K' = K'(f)$ such that $\eta$ is a $K'$-quasi-geodesic normal form.
\end{proof}

Theorem \ref{hyperbolic_normal_forms} can be used to show that certain subgroups of a hyperbolic acylindrical graph of hyperbolic groups are quasi-convex. In the section that follows, we are going to do precisely this to deduce that Magnus subgroups of one-relator groups with a quasi-convex hierarchy are quasi-convex. The main idea we employ is to use quasi-geodesic normal forms in vertex spaces to build quasi-geodesic normal forms relative to given subgroups. Other conditions of a different flavour already exist in the literature, see \cite{BW13} and the references therein. These results cannot be applied directly as the edge groups in the Magnus splittings are not necessarily malnormal and are usually very far from being Noetherian. Nevertheless, an alternative approach could involve understanding the splittings of Magnus subgroups of one-relator groups induced by the Magnus hierarchy and then following the approach of Dahmani from \cite{Da03} for showing local (relative) quasi-convexity of limit groups. We opt for our more direct approach which may be of independent interest.

\subsection{Quasi-convex Magnus subgraphs}

Let us first define what a quasi-convex one-relator hierarchy is. This definition is a reformulation of Wise's \cite{wise_21_quasiconvex} notion.

\begin{definition}
A one-relator tower (respectively, hierarchy) $X_N\immerses ...\immerses X_1\immerses X_0$ is a quasi-convex tower (respectively, hierarchy) if $\pi_1(A_{i+1}), \pi_1(B_{i+1})$ are quasi-isometrically embedded in $\pi_1(X_i)$ for all $i$, where $A_{i+1}, B_{i+1}\subset X_{i+1}$ are the associated Magnus subgraphs.
\end{definition}

In order to prove that one-relator groups with torsion have quasi-convex (one-relator) hierarchies, Wise showed in \cite[Lemma 19.8]{wise_21_quasiconvex} that their Magnus subgroups are quasi-convex. When the torsion assumption is dropped, this is certainly no longer true. However, under additional hypotheses, we may recover quasi-convexity of Magnus subgroups. The only results specifically from the theory of one-relator groups we shall need are the Freiheitssatz and our hierarchy results.

\begin{theorem}
\label{one-relator_normal_form_inductive}
Let $X = (\Gamma, \lambda)$ be a finite one-relator complex, $p:Y\to X$ a cyclic cover and $Z\in \zd(p)$ a finite one-relator $\Z$-domain. Suppose further that the following hold:
\begin{enumerate}
\item $\pi_1(X)$ is hyperbolic.
\item For all connected Magnus subgraphs $W\subset Z$, $\pi_1(W)$ is quasi-convex in $\pi_1(Z)$.
\item $\pi_1(Z)$ is quasi-convex in $\pi_1(X)$.
\end{enumerate}
Then for all connected Magnus subgraphs $C\subset X$, $\pi_1(C)$ is quasi-convex in $\pi_1(X)$. 
\end{theorem}

\begin{proof}
Let $C\subset X$ be a connected Magnus subgraph. Denote by $Z = (\Lambda, \tilde{\lambda})$. Since quasi-convexity is transitive, we may assume that $E(C) = E(\Gamma) - \{f\}$ for some edge $f$. Note that $C$ is connected and so $f$ is non separating.

For each edge $e\in E(X)$, choose a lift $e_0$ in $Y$ as in Subsection \ref{existence_section} and denote by $e_i = t^i\cdot e_0$ for each $i\in \Z$. Denote by $m_e$ the smallest integer such that $e_{m_e}$ is traversed by $\tilde{\lambda}$, and by $M_e$ the largest. We have two cases to consider.

\begin{mycase}

\case For all $f_i\in E(\Lambda)$, we have that $m_f\leq i\leq M_f$ and $f_i$ is non separating in $\Lambda$.

The action of $\deck(p)$ on $Y$, combined with the Freiheitssatz, gives us a graph of spaces $\mathcal{X} = (S^1, \{Z\}, \{Z\cap t\cdot Z\}, \{\partial^{\pm}\})$ as in Proposition \ref{splitting}. We moreover have a map (the horizontal map):
\[
\mathsf{h}:X_{\mathcal{X}}\to X
\]
that is a homotopy equivalence obtained by collapsing the edge space onto the vertex space as in Proposition \ref{homotopy_equivalence}. This lifts to a map in the universal covers:
\[
\tilde{\mathsf{h}}:X_{\tilde{\mathcal{X}}}\to\tilde{X}.
\]
Denote by $A, B\subset Z$ the Magnus subgraphs that are the images of $\partial^{\pm}$. That is, $A = t^{-1}\cdot(Z\cap t\cdot Z)$ and $B = Z\cap t\cdot Z$. 

Denote by $A' = \Lambda - f_{M_f}$ and $B' = \Lambda - f_{m_f}$. These are connected since $f_{m_f}$ and $f_{M_f}$ are non separating and they are Magnus subgraphs since both $f_{m_f}$ and $f_{M_f}$ are traversed by $\tilde{\lambda}$. In particular, $p^{-1}(C)\cap Z\subset A'\cap B'$. Denote by $\tilde{Z}$ the universal cover of $Z$. By the Freiheitssatz, the universal covers of $A\subset A'$ and $B\subset B'$ are trees, including into $\tilde{Z}$. Denote by $\tilde{A}\subset \tilde{A}'$ and $\tilde{B}\subset \tilde{B}'$ subgraphs of $\tilde{Z}$ corresponding to universal covers of $A\subset A'$ and $B\subset B'$ respectively. Since $\pi_1(A')$ and $\pi_1(B')$ are quasi-convex subgroups of $\pi_1(Z)$, there are prefix-closed, quasi-geodesic, $\pi_1(Z)$-equivariant normal forms $\eta_{\tilde{A}'}$, $\eta_{\tilde{B}'}$ for $\tilde{Z}$, relative to $\tilde{A}'$ and $\tilde{B}'$ respectively. Recall that our normal forms are immersed paths and so $\eta_{\tilde{A}'}$ and $\eta_{\tilde{B}'}$ necessarily restrict to geodesic normal forms on $\tilde{A}'$ and $\tilde{B}'$. Moreover, they necessarily agree on intersections of translates of $\tilde{A}'$ with translates of $\tilde{B}'$. Since $\tilde{A}$ and $\tilde{B}$ are subtrees of $\tilde{A}'$ and $\tilde{B}'$ respectively, these are also normal forms relative to $\tilde{A}$ and $\tilde{B}$. Hence, we may apply Theorem \ref{gos_normal_form} to obtain unique prefix-closed $\pi_1(X_{\mathcal{X}})$-equivariant graph of space normal forms $\eta$ for $X_{\tilde{\mathcal{X}}}$, induced by $\left(\{\eta_{\tilde{A}'}\}, \{\eta_{\tilde{A}'}, \eta_{\tilde{B}'}\}\right)$. By Theorems \ref{kapo_main} and \ref{hyperbolic_normal_forms}, $\eta$ is also quasi-geodesic.

Now let $c:I\immerses \tilde{C}\subset \tilde{X}$ be an immersed path and let $c':I\immerses X_{\tilde{\mathcal{X}}}$ be a path such that $\mathsf{h}\circ c'$ is path homotopic to $c$ via a sequence of collapses of segments in the domain which mapped to vertical edges in $X_{\tilde{\mathcal{X}}}$ under $c'$. Since $X_{\mathcal{X}}$ is finite, there is a constant $k$ such that preimages of vertices in $\tilde{X}$ under $\mathsf{h}$ are segments of length at most $k$. In particular, $|c'|\leqslant k|c|$. After possibly performing finitely many homotopies through vertical squares (which do not alter the composition with $\mathsf{h}$), we may assume that $c' = c_0*t^{\epsilon_1}*c_1*...*t^{\epsilon_n}*c_n$ has the property that for all $i$, if $e$ is the first edge that $c_i$ traverses, then $t^{\epsilon_i}*e$ does not form the corner of a vertical square. Since $p^{-1}(C)\cap Z\subset A'\cap B'$, we see that each $c_i$ is both in normal form with respect to $\eta_{\tilde{A}'}$ and $\eta_{\tilde{B}'}$ by the remarks in the previous paragraph. Hence, $c'$ is in normal form with respect to $\eta$. Since $\eta$ was a quasi-geodesic normal form, it follows that $\pi_1(C)$ is quasi-convex in $\pi_1(X)$.

\case There is some $f_i\in E(\Lambda)$ such that either $i<m_f$, or $i>M_f$, or $f_i$ is separating in $\Lambda$. 

Let $X'$ be the one-relator complex obtained from $X$ by adding a single edge, $d$. Thus, we have $\pi_1(X') = \pi_1(X)*\langle x\rangle$. If $\phi:\pi_1(X)\to \Z$ is the epimorphism inducing $p$, denote by $\phi':\pi_1(X')\to \Z$ the epimorphism such that $\phi'\mid\pi_1(X) = \phi$ and $\phi'(x) = 1$. Let $Z''\subset Z$ be the subcomplex obtained from $Z$ by removing each $f_i$ with $i<m_f$ or $i>M_f$. If $p':Y'\to X'$ is the cyclic cover induced by $\phi'$, then $Z''$ lifts to $Y'$. Moreover, for appropriately chosen integers $k\leq l$, we see that 
\[
(\Lambda', \tilde{\lambda}') = Z' = Z''\cup\left(\bigcup_{i = k}^ld_i\right)
\]
is a one-relator $\Z$-domain for $p'$. By construction, for all $f_i\in E(\Lambda')$, we have that $m_f\leq i\leq M_f$ and that $f_i$ is non separating in $\Lambda'$. Moreover
\[
\pi_1(Z') = \bigast_{i=0}^{k-l}\pi_1(A_i)^{x^i}
\]
where each $A_i$ is a (possible empty) connected subcomplex of $Z$. By hypothesis, $\pi_1(A')$ is quasi-convex in $\pi_1(X')$ for all connected subgraphs $A'\subset Z'$. By the proof of the first case, we see that $\pi_1(C)$ is quasi-convex in $\pi_1(X')$. But then $\pi_1(C)$ must also be quasi-convex in $\pi_1(X)$ and we are done.\qedhere
\end{mycase}
\end{proof}

\begin{corollary}
\label{Magnus_quasiconvex}
Let $X = (\Gamma, \lambda)$ be a one-relator complex. Suppose that $\pi_1(X)$ is hyperbolic and $X$ admits a quasi-convex one-relator hierarchy. Then $\pi_1(A)$ is quasi-convex in $\pi_1(X)$ for all Magnus subgraphs $A\subset X$.
\end{corollary}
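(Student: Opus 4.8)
The plan is to induct on the length $N$ of the given quasi-convex one-relator hierarchy $X_N\immerses\dots\immerses X_1\immerses X_0 = X$, peeling off one HNN-splitting at a time and feeding Theorem~\ref{one-relator_normal_form_inductive} the quasi-convexity data produced by the inductive hypothesis. We may assume $X$ is finite, so that every subcomplex has finitely generated fundamental group, and (replacing $A$ by the component containing the basepoint) that $A$ is connected. Write $\pi_1(X_i)\isom \pi_1(X_{i+1})*_{\psi_i}$ for the one-relator splittings, with associated Magnus subcomplexes $A_{i+1},B_{i+1}\subset X_{i+1}$.

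First I would set up two standard ingredients. A downward induction on $i$ shows that $\pi_1(X_i)$ is hyperbolic for all $i$ and that $\pi_1(X_{i+1})$ is quasi-convex in $\pi_1(X_i)$ for all $i<N$: indeed $\pi_1(X_0)=\pi_1(X)$ is hyperbolic by hypothesis, and if $\pi_1(X_i)$ is hyperbolic then, since the hierarchy is quasi-convex, the edge groups $\pi_1(A_{i+1})\isom\pi_1(B_{i+1})$ are quasi-convex in $\pi_1(X_i)=\pi_1(X_{i+1})*_{\psi_i}$, whence the vertex group $\pi_1(X_{i+1})$ is quasi-convex in $\pi_1(X_i)$, and in particular hyperbolic, by the standard fact that a vertex group of a hyperbolic HNN-extension with quasi-convex associated subgroups is itself quasi-convex. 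The second ingredient is transitivity of quasi-convexity: if $H<K<G$ with $G$ hyperbolic, $H$ quasi-convex in $K$ and $K$ quasi-convex in $G$, then $H$ is quasi-convex in $G$.

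With these in hand I would prove by downward induction on $i\in\{0,\dots,N\}$ the statement: every connected subcomplex $C\subset X_i$ has $\pi_1(C)$ quasi-convex in $\pi_1(X_i)$. For $i=N$, $\pi_1(X_N)$ is a free product of finitely many cyclic groups, hence virtually free, hence locally quasi-convex; since $X_N$ is finite this covers all of its subcomplexes. For the inductive step, assume the statement for $X_{i+1}$. Combining it with the first ingredient ($\pi_1(X_{i+1})$ quasi-convex in $\pi_1(X_i)$) and transitivity of quasi-convexity, every connected subcomplex $C'\subset X_{i+1}$ has $\pi_1(C')$ quasi-convex in $\pi_1(X_i)$. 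Writing $X_{i+1}\in\td(p_i)$ for the cyclic cover $p_i\colon Y_i\to X_i$ occurring in the hierarchy, this is exactly hypothesis~(2) of Theorem~\ref{one-relator_normal_form_inductive} applied to the data $(X_i,p_i,X_{i+1})$, and hypothesis~(1) is the hyperbolicity of $\pi_1(X_i)$ established above. Theorem~\ref{one-relator_normal_form_inductive} then yields that every connected subcomplex $C\subset X_i$ has $\pi_1(C)$ quasi-convex in $\pi_1(X_i)$, completing the induction. Taking $i=0$ gives the corollary (and, together with the first ingredient, the slightly stronger statement that $\pi_1(C)$ is quasi-convex in $\pi_1(X)$ for every connected subcomplex $C\subset X_i$ at every level $i$).

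All the real work lives in Theorem~\ref{one-relator_normal_form_inductive}; the only delicate point here is the book-keeping that turns its conclusion, \emph{quasi-convex in $\pi_1(X_{i+1})$}, into the hypothesis it needs one level up, \emph{quasi-convex in $\pi_1(X_i)$}. This is precisely what forces the use of quasi-convexity of the vertex group $\pi_1(X_{i+1})$ in $\pi_1(X_i)$ together with transitivity, and is the step I would be most careful about. (If one wished to avoid invoking the HNN vertex-group fact as a black box, one could instead read quasi-convexity of $\pi_1(X_{i+1})$ off the graph-of-spaces normal forms constructed inside the proof of Theorem~\ref{one-relator_normal_form_inductive}: a path supported in a single vertex copy of $\tilde{X}_{i+1}$ is already in graph-of-spaces normal form, hence quasi-geodesic in $\tilde{X}_i$ — but citing the standard fact is cleaner.)
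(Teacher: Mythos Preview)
Your proof is correct and follows essentially the same inductive strategy as the paper's (very terse) proof, which cites Proposition~\ref{kapo_converse} and Theorem~\ref{kapo_main} precisely to obtain the quasi-convexity of the vertex group $\pi_1(X_{i+1})$ in $\pi_1(X_i)$ that you extract from your ``standard fact''. Your careful bookkeeping of the passage from quasi-convex in $\pi_1(X_{i+1})$ to quasi-convex in $\pi_1(X_i)$ via transitivity is exactly the content hidden in the paper's one-line inductive step.
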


\begin{proof}
The proof is by induction on hierarchy length. The base case is clear. Theorems \ref{kapo_main} and \ref{one-relator_normal_form_inductive} handle the inductive step.
\end{proof}

At this point we remark that we have essentially proved the equivalence between (\ref{itm:hqch}) and (\ref{itm:acylindrical_h}) from Theorem \ref{main}. Nevertheless, we postpone the complete proof to Section \ref{sec:main}.

\section{$\Z$-stable HNN-extensions of one-relator groups}
\label{sec:stable}

Let us repeat the definition of the \emph{$\Z$-stable number} $s\Z(\psi)$ from the introduction. If $\psi:A\to B$ is an isomorphism of two subgroups $A, B<H$, inductively define 
\[
\mathcal{A}^{\psi}_0 = \{[H]\}, \quad \ldots, \quad \mathcal{A}^{\psi}_{i+1} = \{[\psi(A\cap A_i)]\}_{A_i\in [A_i]\in \mathcal{A}^{\psi}_i}, \quad \ldots
\]
Then we denote by $\bar{\mathcal{A}}_i^{\psi}\subset \mathcal{A}_i^{\psi}$ the subset corresponding to the conjugacy classes of non-cyclic subgroups. Define the \emph{$\Z$-stable number} $s\Z(\psi)$ of $\psi$ as
\[
s\Z(\psi) = \sup{\{k + 1 \mid \bar{\mathcal{A}}_k^{\psi}\neq\emptyset\}}\in \N\cup\{\infty\}
\]
where $s\Z(\psi) = \infty$ if $\bar{\mathcal{A}}_i^{\psi}\neq \emptyset$ for all $i$.  We say that $H*_{\psi}$ is \emph{$\Z$-stable} if $s\Z(\psi)<\infty$.

For the purposes of this section, unless otherwise stated, we will always assume that $H$ is a finitely generated one-relator group and that $A, B<H$ are two strongly inert subgroups of Magnus subgroups for some one-relator presentation of $H$. For the main results of this article, the reader should have in mind the case in which $A$ and $B$ are Magnus subgroups themselves. Let $\psi:A\to B$ be an isomorphism. Consider the HNN-extension of $H$ over $\psi$:
\begin{equation*}
G \isom H*_{\psi} \isom \langle H, t \mid tat^{-1} = \psi(a), \forall a\in A\rangle
\end{equation*}
We will call this an \emph{inertial one-relator extension}.

\begin{remark}
\label{splitting_remark}
Recall that if $X$ is a one-relator complex, then a one-relator splitting of $\pi_1(X)$ is a HNN-splitting of $\pi_1(X)$ as in Theorem \ref{new_hierarchy}. By Remark \ref{magnus_remark}, every one-relator splitting $\pi_1(Z)*_{\psi}$ of $\pi_1(X)$ is an inertial one-relator extension.
\end{remark}

\begin{remark}
If $H$ is a one-relator group with the trivial relation, $H$ itself is a Magnus subgroup of $H$. Thus, HNN-extensions of free groups with finitely generated strongly inert edge groups are inertial one-relator extensions.
\end{remark}

\begin{remark}
For simplicity, we focus only on HNN-extensions. However, with minor modifications, most of the results in this section hold for graphs of groups $\{\Gamma, \{G_v\}, \{G_e\}, \{\partial_e^{\pm}\}\}$ satisfying the following:
\begin{enumerate}
\item $\Gamma$ is a connected graph with $\chi(\Gamma)\geq 0$,
\item for each $v\in V(\Gamma)$, $G_v$ has a one-relator presentation so that each adjacent edge group is a strongly inert subgroup of a Magnus subgroup of $G_v$.
\end{enumerate}
This class of groups includes groups with staggered presentations.
\end{remark}

Let $T$ denote the Bass--Serre tree associated with the inertial one-relator extension $H*_{\psi}$. See \cite{serre_80} for the relevant notions in Bass--Serre theory. We will identify each vertex of $T$ with a left coset of $H$. Denote by 
\begin{align*}
\mathcal{S}^n &= \{S \mid S \text{ a geodesic segment of length $n$ with an endpoint at $H$}\}\\
\mathcal{S} &= \bigcup_i\mathcal{S}^i
\end{align*}
Each edge in $T$ has an orientation induced by $\psi$. Denote by $\mathcal{S}^n_{+}\subset \mathcal{S}^n$ the geodesic segments which only consist of edges pointing away from $H$ and by $\mathcal{S}^n_{-}\subset \mathcal{S}^n$ those pointing towards $H$. The elements $[A_n]\in \bar{\mathcal{A}}_n^{\psi}$ correspond to stabilisers of segments $S\in \mathcal{S}_{-}^n$ such that $\rr(\stab(S))\neq 0$.

For inertial one-relator extensions, the ranks of stabilisers of elements in $\mathcal{S}^n$ are bounded in a strong sense.

\begin{lemma}
\label{stab_ranks}
For all $n\geq 1$, the following holds:
\begin{align*}
\sum_{\substack{H\cdot S \\ S\in \mathcal{S}^n}}\rr(\stab(S)) &= \sum_{\substack{H\cdot S \\ S\in \mathcal{S}_{-}^n}}\rr(\stab(S)) + \sum_{\substack{H\cdot S \\ S\in \mathcal{S}_{+}^n}}\rr(\stab(S))\\
\sum_{\substack{H\cdot S \\ S\in \mathcal{S}_{\pm}^n}}\rr(\stab(S)) &\leq\rr(A).
\end{align*}
\end{lemma}

\begin{proof}
Let $S\in \mathcal{S}^n$, then $\stab(S) = H\cap H^{c^{-1}}$ where $c$ is equal to a reduced word of the form:
\[
c = c_1t^{\epsilon_1}\ldots c_nt^{\epsilon_n}
\]
with $\epsilon_i = \pm 1$ and $c_i\in H$. The segment $S$ is the geodesic segment with vertices $H, c_1t^{\epsilon_1}H, \ldots, cH$. By assumption we have that $A$ is a strongly inert subgroup of a Magnus subgroup $A'<H$. We have that $\rr(A\cap A^a) = 0$ for all $a\in A' - A$ by Remark \ref{cyclonormal_remark} (similarly for $B$). Combining this with Theorem \ref{conjugates_intersection}, if there is some $i$ such that $\epsilon_i = -\epsilon_{i+1}$, then $\stab(S)$ is either cyclic or trivial since our word was reduced. Denote by $W_{+1}^n$ the set of reduced words of the form $tc_2t\ldots c_{n}t$ and by $W_{-1}^n$ the set of reduced words of the form $t^{-1}c_2t^{-1}\ldots c_{n}t^{-1}$. Let $T_{\pm1}^n$ be a set of representatives for the set of double cosets $H\backslash W_{\pm1}^n/H$. Then we have
\[
\sum_{\substack{H\cdot S \\ S\in \mathcal{S}^n}}\rr(\stab(S)) = \sum_{c\in T_{+1}^n}\rr\left(H\cap H^{c^{-1}}\right) + \sum_{c\in T_{-1}^n}\rr\left(H\cap H^{c^{-1}}\right)
\]
which yields the equality from the statement. By symmetry, it suffices to only bound one of the sums above to establish the inequality. We may assume that each word in $T_{+1}^n$ is a prefix of a word in $T_{+1}^{n+1}$. If $c\in T_{+1}^n$, denote by $T_c$ the set of elements $c_{n+1}\in H$ such that $cc_{n+1}t\in T_{+1}^{n+1}$. We first claim that each element $c_{n+1}\in T_c$ is in a distinct $(H\cap H^c)c_{n+1}B$ double coset. Let $c_{n+1}, c_{n+1}'\in T_{c}$ be such that $c_{n+1}' = hc_{n+1}b$ where $h = c^{-1}h'c\in H\cap H^c$ and $b\in B$. Then we have that $cc_{n+1}'t = h'cc_{n+1}t\psi^{-1}(b)\in Hcc_{n+1}tH$, a contradiction. We next claim that
\[
\sum_{c\in T_{+1}^n}\rr\left(H\cap H^{c^{-1}}\right) = \sum_{c\in T_{+1}^n}\rr(H\cap H^c)\leqslant \rr(A).
\]
The equality is obvious, we now show the inequality. The proof of the claim proceeds by induction with the base case being clear as $H\cap H^{t^{-1}} = A$. So let us now assume the inductive hypothesis. We have
\begin{align*}
\sum_{c\in T_{+1}^{n+1}}\rr(H\cap H^c) &= \sum_{c\in T_{+1}^n}\sum_{c_{n+1}\in T_c}\rr\left(H\cap H^{cc_{n+1}t}\right) \\
			&\leqslant \sum_{c\in T_{+1}^n}\sum_{\substack{(H\cap H^c)c_{n+1}B\\ c_{n+1}\in H}}\rr\left(H^{t^{-1}}\cap H\cap H^{cc_{n+1}}\right)\\
			& =  \sum_{c\in T_{+1}^n}\sum_{\substack{(H\cap H^c)c_{n+1}B\\ c_{n+1}\in H}}\rr\left( B\cap (H\cap H^c)^{c_{n+1}}\right)\\
			&\leqslant \sum_{c\in T^n_{+1}}\rr(H\cap H^c)\\
			&\leqslant \rr(A)
\end{align*}
where the first inequality follows from our previous claim, the second inequality follows from Theorem \ref{subgroup_Magnus_intersection} and where the final inequality follows from the inductive hypothesis. The lemma now readily follows.
\end{proof}

The following proposition is key in our proof of $\Z$-stability of one-relator hierarchies of one-relator groups with negative immersions. The main consequence is that if $s\Z(\psi) = \infty$, then there exists some element $g\in G$ acting hyperbolically on $T$ such that $\rr(H\cap H^{g^n})\neq 0$ for all $n$.

\begin{proposition}
\label{biinfinite_stab}
If $s\Z(\psi) = \infty$, then there are $1\leq n\leq\rr(A)$ many $H$-orbits of biinfinite geodesics $S\subset T$ such that the following holds:
\begin{enumerate}
\item $S$ contains the vertex $H$.
\item Every finite subset of $S$ has non-cyclic, non-trivial stabiliser.
\end{enumerate}
Moreover, for every such biinfinite geodesic, the following holds:
\begin{enumerate}
\item Every edge in $S$ is directed in the same direction.
\item There exists an element $g\in G$ acting hyperbolically on $T$ with translation length at most $\rr(A)$ such that $g\cdot S = S$.
\end{enumerate}
\end{proposition}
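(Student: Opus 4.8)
The plan is to exploit the uniform reduced-rank bound $\sum_{[A_n]\in\bar{\mathcal A}_n^\psi}\rr(A_n)\le\rr(A)$ (the one-sided form of Lemma \ref{stab_ranks}) together with a König-type compactness argument in the Bass--Serre tree $T$. Call a segment $S\in\mathcal S_H^n$ \emph{good} if all of its edges are directed towards $H$ and $\rr(\stab(S))\ge 1$; by the discussion preceding Lemma \ref{stab_ranks}, the $H$-orbits of good segments of length $n$ correspond to the elements of $\bar{\mathcal A}_n^\psi$, so there are at most $\rr(A)$ of them. Since a subgroup of a cyclic group is cyclic, deleting the terminal edge of a good segment yields a good segment, so the $H$-orbits of good segments form a forest graded by length, with non-empty levels whenever $s(\psi)=\infty$. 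König's lemma then produces an infinite branch, which lifts to a geodesic ray $R=[v_0,v_1,v_2,\dots]$ in $T$ with $\stab(v_0)=H$, every edge of $R$ directed towards $v_0$, and $\stab([v_0,\dots,v_n])$ non-cyclic for all $n$.

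\textbf{Finiteness modulo $G$.} A good ray is precisely a compatible sequence in the inverse system of finite sets $\{H\text{-orbits of good segments of length }n\}$; since these sets have at most $\rr(A)$ elements and the bonding maps are length-compatible, the inverse limit is finite, because a finite collection of distinct compatible sequences must already be separated at a single level. Hence there are at most $\rr(A)$ many $H$-orbits of good rays, and hence at most $\rr(A)$ many $G$-orbits, since every good ray is $G$-equivalent to one based at the vertex $H$. Running the same argument for two-sided germs shows that the good bi-infinite geodesics --- those every finite subset of which has non-cyclic, non-trivial stabiliser --- passing through a vertex stabilised by $H$ fall into at most $\rr(A)$ many $H$-orbits.

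\textbf{Producing the translating element.} Starting from the ray $R$, the tail subrays $R_m=[v_m,v_{m+1},\dots]$ are again good, so by the finiteness above $R_i$ and $R_j$ are $G$-equivalent for some $0\le i<j\le\rr(A)$, say $g\cdot R_i=R_j$. Then $g$ acts hyperbolically on $T$ with translation length $j-i\le\rr(A)$, and its axis $S$ is a bi-infinite geodesic containing $[v_i,v_{i+1},\dots]$; a suitable power of $g$ moves any finite subsegment of $S$ into $R_i$, where subsegment stabilisers are non-cyclic, so every finite subset of $S$ has non-cyclic stabiliser and $S$ is a good bi-infinite geodesic. In particular such an $S$ exists, giving the lower bound $1$. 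Conversely, let $S$ be any good bi-infinite geodesic; a ``turn'' along $S$ would force a cyclic or trivial stabiliser by Theorem \ref{subgroup_Magnus_intersection}, exactly as in the proof of Lemma \ref{stab_ranks}, so all edges of $S$ point the same way. Writing $S=(\dots,v_{-1},v_0,v_1,\dots)$ with $\stab(v_0)=H$, the pointed geodesics $(S,v_{-m})$ lie in at most $\rr(A)$ many $G$-orbits, so $(S,v_{-i})$ and $(S,v_{-j})$ are $G$-equivalent for some $0\le i<j\le\rr(A)$ via an element $g$ with $g\cdot S=S$ and $g v_{-i}=v_{-j}$; this $g$ is hyperbolic with axis $S$ and translation length $j-i\le\rr(A)$, and conjugating $S$ so that the $H$-stabilised vertex is $v_0$ yields the asserted translating element.

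The hardest step will be ``producing the translating element'': promoting the infinite good ray, whose finite subsegments merely have non-cyclic stabilisers, into an honest \emph{axis} of a hyperbolic isometry. The delicate point is that an element realising a $G$-equivalence of two subrays need not a priori preserve the entire bi-infinite geodesic; I would handle this by tracking the full two-sided germ of $S$ (using goodness of $S$ to control the forward subrays), and it is precisely here --- through the $\rr(A)$-bound of Lemma \ref{stab_ranks} and the near-rigidity of segment orbits in Lemma \ref{segment_orbits} --- that the finiteness forcing eventual periodicity is indispensable.
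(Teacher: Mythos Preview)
Your proof is correct and follows essentially the same strategy as the paper: both rely on the uniform bound from Lemma~\ref{stab_ranks} to get finiteness of orbits, and then a pigeonhole argument on translates to produce the hyperbolic element. The paper is terser --- it invokes Lemmas~\ref{stab_ranks} and~\ref{segment_orbits} to assert the direction claim and the count $1\le n\le\rr(A)$ directly, then applies pigeonhole to the translates $g_{i,j}^{-1}\cdot S_i$ of a given biinfinite geodesic $S_i$ --- whereas you make the K\"onig step explicit, first building a good ray and then promoting it to a biinfinite axis via the hyperbolic element. Your route has the pleasant side effect that the translation-length bound $\le\rr(A)$ falls out immediately from the pigeonhole on $0\le i<j\le\rr(A)$, while the paper first obtains $\le 2\rr(A)$ and then refines. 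The only place you are as brief as the paper is the ``two-sided germs'' sentence bounding the $H$-orbits of biinfinite geodesics by $\rr(A)$; this is fine, since it really is the same inverse-limit argument, but note that your final pigeonhole on pointed geodesics $(S,v_{-m})$ already subsumes the earlier ray-based construction once existence is known.
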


\begin{proof}
The fact that every such geodesic must consist of edges directed in the same direction follows from the equality in Lemma \ref{stab_ranks}. The fact that there are $1\leq n\leq \rr(A)$ many such $H$-orbits of geodesics follows by applying the pigeonhole principle and the inequality from Lemma \ref{stab_ranks}. So now let $S_1, ..., S_n$ be a collection of $H$-orbit representatives of such biinfinite geodesics in $T$. Identify each vertex of $S_i$ with an integer so that $H$ is the vertex associated with $0$. Let $g_{i, j}\in G$ be an element such that $g_{i, j}H$ is the $j^{\text{th}}$ vertex in $S_i$. Then $g_{i, j}^{-1}\cdot S_i$ must be in the same $H$-orbit of some $S_m$. But then by the pigeonhole principle, $S_i, g_{i, 1}^{-1}\cdot S_i, ..., g_{i, \rr(A)}^{-1}\cdot S_{i}$ must contain two biinfinite geodesics in the same $H$-orbit. Suppose that $h_1g_{i, k}^{-1}\cdot S_i = h_2g_{i, l}^{-1}\cdot S_i = S_l$ for some $h_1, h_2\in H$ and $1\leqslant k< l\leqslant \rr(A)$. Then we have
\[
S_i = h_2g_{i, l}(h_1g_{i, k})^{-1}\cdot S_i
\]
where $h_2g_{i, l}(h_1g_{i, k})^{-1}$ acts hyperbolically on $T$ with translation length at most $\rr(A)$.
\end{proof}

\subsection{One-relator groups with torsion}

The following lemma allows one to easily establish $\Z$-stability of a one-relator splitting in certain cases: if the edge groups of a one-relator splitting have no exceptional intersection, then the splitting is $\Z$-stable.

\begin{lemma}
\label{exceptional_stable}
Let $X$ be a one-relator complex and let $\pi_1(X)\isom \pi_1(X_1)*_{\psi}$ be a one-relator splitting where $A, B\subset X_1$ are the associated Magnus subgraphs. Let $A', B'\subset X_1$ be Magnus subgraphs such that $A\subset A'$, $B\subset B'$ and $A'\cap B'$ is connected. If $\pi_1(A')\cap \pi_1(B') = \pi_1(A'\cap B')$, then $s\Z(\psi)<\infty$.
\end{lemma}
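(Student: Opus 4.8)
The plan is to show that the hypothesis $\pi_1(A')\cap\pi_1(B')=\pi_1(A'\cap B')$ forces every subgroup appearing in the iterated construction defining $\mathcal{A}^\psi_n$ to be supported in a proper Magnus subcomplex of $X_1$, and then to argue that the rank of these subgroups strictly drops at each stage, so the non-cyclic part $\bar{\mathcal{A}}^\psi_n$ must eventually become empty. First I would set up notation: let $A=\pi_1(A)$, $B=\pi_1(B)$, $A'=\pi_1(A')$, $B'=\pi_1(B')$ inside $H=\pi_1(X_1)$, with $\psi:A\to B$ the given isomorphism, and recall from Theorem~\ref{exceptional_intersection} (and the hypothesis) that $A'\cap B'$ has \emph{no} exceptional intersection, so in particular $A\cap B\subset A'\cap B'=\pi_1(A'\cap B')$, and $A'\cap B'$ is itself the fundamental group of a Magnus subcomplex strictly smaller than $X_1$ (it omits at least the edge of $A'$ not in $B'$ and vice versa, since $A'\ne B'$ as $A'\cap B'$ is their intersection).

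Next I would trace through the recursion $\mathcal{A}^\psi_{i+1}=\{[\psi(A\cap A_i)]\}$. The key claim is: for $i\ge 2$, every representative $A_i$ of a class in $\mathcal{A}^\psi_i$ is conjugate into $B'$, and in fact $A_i\le (A'\cap B')^g$ for some $g$. The base of this is $\mathcal{A}^\psi_2=\{[\psi(A\cap A_1)]\}$ with $A_1$ conjugate to $B$; so $A\cap A_1$ is conjugate (by Theorem~\ref{subgroup_Magnus_intersection}, moving inside $B'A'$) into a conjugate of $A\cap B\subseteq A'\cap B'$, hence $\psi(A\cap A_1)$ lands in $\psi(A\cap (A'\cap B'))\subseteq B\subseteq B'$, and because there is no exceptional intersection the relevant intersection is exactly $\pi_1$ of a subcomplex, so $\psi(A\cap A_1)$ sits inside a conjugate of $A'\cap B'$ — a proper Magnus subcomplex. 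For the inductive step, given $A_i$ conjugate into $A'\cap B'$, we have $A\cap A_i\subseteq A\cap (A'\cap B')^g$, which by Theorem~\ref{subgroup_Magnus_intersection} is again conjugate into a subcomplex at least as small, and applying $\psi$ keeps it inside $B'$. The crucial quantitative input is Lemma~\ref{stab_ranks}: $\sum_{[A_n]\in\bar{\mathcal{A}}^\psi_n}\rr(A_n)\le\rr(A)$ for every $n$, so the total reduced rank is a bounded non-negative integer; combined with the fact that once a class is non-cyclic and lands in a strictly smaller Magnus subcomplex $A'\cap B'$, a genuine rank estimate (reduced rank of $A\cap(A'\cap B')^g$ is controlled, via Theorem~\ref{subgroup_Magnus_intersection} with $C$ strongly inert), we should get that the total reduced rank $\sum_{\bar{\mathcal{A}}^\psi_n}\rr$ is \emph{non-increasing}, and strictly decreasing whenever $\bar{\mathcal{A}}^\psi_n\ne\emptyset$ and the edge group $A$ actually meets these subgroups non-trivially. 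A cleaner way to package this: once we know all of $\bar{\mathcal{A}}^\psi_2$ lives in conjugates of the proper Magnus subgroup $A'\cap B'$, Lemma~\ref{segment_orbits} and Proposition~\ref{biinfinite_stab} apply — if $s(\psi)=\infty$ there would be a biinfinite geodesic $S\subset T$ all of whose finite subsets have non-cyclic stabiliser and an element $g$ acting hyperbolically with $g\cdot S=S$; but the stabilisers along $S$ are nested intersections of conjugates of $A$ with conjugates of $A'\cap B'$, which under $\psi$ and the no-exceptional-intersection hypothesis stay inside $B'\cap (A'\cap B')$-type subgroups — and the reduced rank bound of Lemma~\ref{stab_ranks} plus strong inertness (Lemma~\ref{Magnus_inert}, Theorem~\ref{subgroup_Magnus_intersection}) forbids an infinite descending chain of non-cyclic subgroups of bounded total reduced rank, contradiction.

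I expect the main obstacle to be making the rank-drop rigorous: a priori $\rr(A\cap(A'\cap B')^g)$ need not be strictly smaller than $\rr(A_i)$, only bounded by $\rr(A)$, so the naive "rank strictly decreases" argument does not literally work, and one has to instead show that the \emph{collection} of conjugacy classes in $\bar{\mathcal{A}}^\psi_n$ eventually stabilises in a way that, together with $\psi$ being an isomorphism and the no-exceptional-intersection hypothesis, forces the subgroups to be cyclic. The honest route is probably to argue via Proposition~\ref{biinfinite_stab}: assume $s(\psi)=\infty$, obtain a $g$-invariant biinfinite geodesic $S$ with all finite subsets having non-cyclic stabiliser, note that by the above every edge of $S$ points the same way so all stabilisers are of the form (conjugate of $A$)$\cap$(iterated $\psi$-image), locate a vertex where the stabiliser is forced into a conjugate of the proper subgroup $\pi_1(A'\cap B')$ that omits the relevant edges, and then observe that the hyperbolic element $g$ would conjugate $\pi_1(A'\cap B')$-type subgroups around in a way incompatible with Theorem~\ref{subgroup_Magnus_intersection}'s finiteness and the bound of Lemma~\ref{stab_ranks} — essentially because no exceptional intersection means the pair $(A',B')$ behaves like an ``honest'' edge pair with no hidden cyclic extension, so the relevant intersections are computed purely topologically and cannot support an infinite tower. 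I would write the contradiction at the level of the reduced-rank sum being a fixed finite integer that would have to support infinitely many distinct non-cyclic conjugacy classes along $S$.
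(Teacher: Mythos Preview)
There is a genuine gap. You correctly identify that the hypothesis forces the groups $A_n$ to live inside fundamental groups of proper Magnus subcomplexes, but you then try to close the argument via rank drop or via Proposition~\ref{biinfinite_stab}, and neither route is completed. As you yourself note, $\rr(A\cap(A'\cap B')^g)$ need not be strictly smaller than $\rr(A_i)$, so the rank argument stalls; and the biinfinite-geodesic sketch ends with the sentence ``the reduced-rank sum \dots would have to support infinitely many distinct non-cyclic conjugacy classes along $S$'', which is not a contradiction: Lemma~\ref{stab_ranks} bounds the sum for each fixed $n$, and nothing prevents the same finitely many classes from recurring along $S$ under the action of $g$.

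What you are missing is that the hypothesis lets you replace \emph{group-theoretic} intersections by \emph{subcomplex} intersections, and subcomplexes of a finite complex run out. The paper's proof works entirely in the cyclic cover $Y\to X$ containing $X_1$: there $B=1\cdot A$ as subcomplexes, and the hypothesis $\pi_1(A')\cap\pi_1(B')=\pi_1(A'\cap B')$ (together with Theorem~\ref{conjugates_intersection}) says precisely that a graph immersion $\gamma:\Gamma\looparrowright A$ with $\chi(\Gamma)\le -1$ is homotopic in $X_1$ into $B$ if and only if it is already homotopic in $A$ into the subcomplex $A\cap B=A\cap 1\cdot A$. Iterating, $s(\psi)\ge n$ forces such a $\gamma$ to be homotopic into the literal subcomplex $A\cap 1\cdot A\cap\cdots\cap n\cdot A\subset Y$. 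Since $A$ is a finite subcomplex and the $\Z$-action on $Y$ is free, this nested intersection is empty for $n$ large, giving the bound on $s(\psi)$ immediately. No rank estimates, no Lemma~\ref{stab_ranks}, no Proposition~\ref{biinfinite_stab} are needed.
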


\begin{proof}
Let $Y\to X$ be the cyclic cover containing $X_1$ and let $t$ be a generator of the deck group. Let $\gamma:\Gamma\immerses A$ be a graph immersion with $\chi(\Gamma)\leq -1$. By assumption and Theorem \ref{conjugates_intersection}, $\gamma$ is homotopic in $X_1$ to a graph immersion $\gamma_1:\Gamma\immerses B = t\cdot A$ if and and only if $\gamma$ is homotopic in $A$ to a graph immersion $\gamma_1:\Gamma_1\immerses A\cap B$. If $s\Z(\psi)\geq 2$, there is a graph immersion $\gamma$ as above such that $\gamma_1:\Gamma_1\immerses A\cap B\injects t\cdot A$ is homotopic in $t\cdot X_1$ to a graph immersion $\gamma_2:\Gamma_2\immerses A\cap t\cdot A\cap t^2\cdot A\injects 1\cdot (A\cap B)$. Carrying on this argument, we see that if $s\Z(\psi) \geq n$, then there exists a graph immersion $\gamma:\Gamma\immerses A$ such that $\chi(\Gamma)\leq -1$ and such that $\gamma$ is homotopic to a graph immersion $\gamma_n:\Gamma_n\immerses A\cap t\cdot A\cap ...\cap t^n\cdot A$. However, clearly for large enough $n$ we have that $A\cap t\cdot A\cap... \cap t^n\cdot A = \emptyset$.
\end{proof}

The condition from Lemma \ref{exceptional_stable} always holds for one-relator splittings of one-relator groups with torsion by Remark \ref{splitting_remark} and Theorem \ref{exceptional_intersection} (see Remark \ref{torsion_remark}), yielding the following.

\begin{corollary}
\label{torsion_stable}
All one-relator splittings of one-relator groups with torsion are $\Z$-stable.
\end{corollary}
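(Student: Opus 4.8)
The plan is to reduce to Lemma \ref{exceptional_stable}. So let $\pi_1(X)\isom\pi_1(X_1)*_\psi$ be a one-relator splitting of a one-relator group with torsion, with associated Magnus subcomplexes $A,B\subset X_1$; by Lemma \ref{exceptional_stable} it is enough to produce Magnus subcomplexes $A'\supseteq A$, $B'\supseteq B$ of $X_1$ such that $A'\cap B'$ is connected and $\pi_1(A')\cap\pi_1(B')=\pi_1(A'\cap B')$. All the content will be in verifying the second condition, which by Theorem \ref{exceptional_intersection} amounts to checking that the relator of $X_1$ is imprimitive.

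First I would record that the relator of $X$ itself is imprimitive. Since $\pi_1(X)$ has torsion, by the Karrass--Magnus--Solitar theorem \cite{karrass_60} it admits a one-relator presentation whose relator is a proper power; equivalently, after reducing, the attaching cycle factors as $\lambda=\mu\circ\rho$ with $\mu$ primitive and $\rho:S^1\to S^1$ of degree $\deg(\lambda)\geq 2$. Next I would transfer this to the tree domain: if $p:Y\to X$ is the cyclic cover with $X_1\in\td(p)$, then the image of $[\mu]$ in $\Z$ vanishes (it is a $\deg(\lambda)$-th root of $[\lambda]$, which dies in $\pi_1(X)$, and $\Z$ is torsion-free), so $\mu$ lifts to $Y$ and the relator $\tilde\lambda$ of $X_1$ still factors through $\rho$. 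Hence $\tilde\lambda$ is imprimitive.

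For the subcomplexes $A',B'$ I would invoke Remark \ref{splitting_remark}: by Lemma \ref{Magnus_free_factor}, after replacing $\pi_1(X)$ with $\pi_1(X)*F$ for a suitable finitely generated free group $F$, the subgroups $\pi_1(A)$, $\pi_1(B)$ become free factors of a pair of Magnus subgroups $\pi_1(A'),\pi_1(B')$ for a one-relator presentation of $\pi_1(X_1)*F$; as a one-relator presentation complex has a single vertex, $A'\cap B'$ is automatically connected. Adding $F$ does not touch the relator, so $\tilde\lambda$ remains imprimitive, and Theorem \ref{exceptional_intersection} gives $\pi_1(A')\cap\pi_1(B')=\pi_1(A'\cap B')$. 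Lemma \ref{exceptional_stable} then delivers $s(\psi)<\infty$ for the splitting of $\pi_1(X)*F$, and I would finish by noting that $s(\psi)$ is the same for the original splitting: for $n\geq 1$ the sets $\bar{\mathcal A}_n^\psi$ are computed identically over $\pi_1(X_1)$ and over $\pi_1(X_1)*F$, because in the free product one has $\pi_1(X_1)\cap\pi_1(X_1)^g=1$ for $g\notin\pi_1(X_1)$, so every non-trivial intersection $A\cap B^g$ already has $g\in\pi_1(X_1)$, and $\pi_1(X_1)*F$-conjugacy of non-trivial subgroups of $\pi_1(X_1)$ coincides with $\pi_1(X_1)$-conjugacy.

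The substance of the argument is entirely packaged in Theorem \ref{exceptional_intersection} (imprimitive relator forces non-exceptional intersection of Magnus subgroups) and Lemma \ref{exceptional_stable} (non-exceptional intersection forces finite stable number), so the only delicate point is the bookkeeping for ``up to adding a free factor'': one must confirm, as indicated above, that introducing $F$ alters neither the imprimitivity of the relator nor the finiteness of $s(\psi)$.
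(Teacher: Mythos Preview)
Your proof is correct and follows essentially the same route as the paper, which simply cites Remark \ref{splitting_remark} and Theorem \ref{exceptional_intersection} to assert that the hypothesis of Lemma \ref{exceptional_stable} holds. You have unpacked the details the paper leaves implicit---namely, lifting the primitive root of $\lambda$ to see that the relator of $X_1$ is imprimitive, and the bookkeeping showing that passing to $\pi_1(X)*F$ affects neither the imprimitivity nor $s(\psi)$.
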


\subsection{One-relator groups with negative immersions}

The aim of this subsection is to show that every one-relator splitting of a one-relator group with negative immersions is $\Z$-stable. In order to do this, we show that certain constraints on the subgroups of $H*_{\psi}$ that are not conjugate into $H$ imply $\Z$-stability.

\begin{theorem}
\label{descending_stable}
If every descending chain of non-cyclic, freely indecomposable proper subgroups of bounded rank
\[
G=H_0>H_1>...>H_n>...
\]
is either finite or eventually conjugate into $H$, then $s\Z(\psi)<\infty$.
\end{theorem}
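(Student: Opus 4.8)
The plan is to argue by contraposition: assuming $s(\psi) = \infty$, I will produce an infinite descending chain of non-cyclic, freely indecomposable, proper subgroups of $G$ of bounded rank that is neither finite nor eventually conjugate into $H$. The starting point is Proposition \ref{biinfinite_stab}: since $s(\psi) = \infty$, there is a biinfinite geodesic $S\subset T$ in the Bass--Serre tree, with all edges pointing in the same direction, such that $H$ stabilises a vertex of $S$, every finite subset of $S$ has non-cyclic non-trivial stabiliser, and there is an element $g\in G$ acting hyperbolically on $T$ with $g\cdot S = S$ and translation length $\ell \leq \rr(A)$. Replacing $g$ by a power if necessary (which only rescales $\ell$), I may assume $g$ translates $S$ in the direction the edges point.

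The key construction is then to set $H_k = H\cap H^{g^k}$ (or rather, to take a suitable nested family of stabilisers of longer and longer initial segments of $S$, pushed around by powers of $g$). Concretely, let $S_n$ denote the segment of $S$ of length $n$ with one endpoint at the vertex $vH$ stabilised by $H$; then $\stab(S_n)$ is non-cyclic for all $n$, these are nested, and by Lemma \ref{stab_ranks} their reduced ranks are bounded (they inject, up to the factor $2$, into a sum over $\bar{\mathcal{A}}_n^{\psi}$, which is bounded by $2\rr(A)$). So $\{\stab(S_n)\}_n$ is a descending chain of non-cyclic subgroups of bounded rank. To turn this into a chain of \emph{proper, freely indecomposable} subgroups, I would pass to the Grushko decomposition of each $\stab(S_n)$: by the bounded-rank hypothesis, only finitely many ranks occur, so along a subsequence the number of free factors in the Grushko decomposition is eventually constant, and I can select a coherent descending chain of non-cyclic freely indecomposable free factors $H_k$ with $H_{k+1}<H_k$; properness at each step will follow because $\stab(S_{n+1})\subsetneq \stab(S_n)$ infinitely often (if the chain of stabilisers were eventually constant, that constant subgroup would stabilise an infinite ray of $S$, forcing all vertices of that ray to have the same stabiliser $H^g{}^j$'s intersected, but then the translate structure under $g$ combined with acylindricity-type bounds — or directly Lemma \ref{stab_ranks} applied for all $n$ — gives a contradiction unless that common subgroup is cyclic).

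Finally I must check that this chain is \emph{not eventually conjugate into $H$}. This is where the hyperbolic element $g$ does its work: the $H_k$ all fix the biinfinite geodesic $S$ pointwise-on-finite-subsets, and $S$ is $g$-invariant with $g$ hyperbolic, so no $H_k$ (being non-cyclic, hence non-trivial and not fixing an end only) can be conjugated to fix a \emph{single} vertex of $T$; a subgroup conjugate into $H$ fixes a vertex, whereas $H_k$ stabilises arbitrarily long segments of the axis-like geodesic $S$ without fixing all of $S$'s vertices' stabilisers — more precisely, one shows that if some $H_k^x < H$ then $x^{-1}\cdot(\text{fixed vertex})$ would have to lie on $S$ and $H_k$ would fix a point of $S$, but then the $g$-translates show $H_k$ fixes a ray, contradicting non-cyclicity via Lemma \ref{stab_ranks} once more (a non-cyclic group fixing a ray of edges all oriented the same way would contribute non-cyclic classes to $\bar{\mathcal{A}}_n^{\psi}$ for all $n$, consistent with $s(\psi)=\infty$, but such a group is \emph{not} conjugate into $H$ since $H$ is a vertex stabiliser and the ray is infinite — a vertex stabiliser fixes only finitely much of any geodesic through acylindricity, but here we do not even need acylindricity, just that $g$ acts hyperbolically so $\bigcap_k H^{g^k}$-type intersections force the point-stabiliser to be trivial or the group to fix an end, and fixing an end of a simplicial tree while being conjugate into a vertex group is impossible for an infinite such ray).

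The main obstacle I anticipate is the bookkeeping in the middle step: extracting from the descending chain of stabilisers a descending chain of \emph{freely indecomposable proper} subgroups that genuinely strictly descends infinitely often, rather than stabilising. The cleanest route is probably to work not with a single chain but to observe that $\bar{\mathcal{A}}_n^{\psi}\neq\emptyset$ for all $n$ gives, via König's lemma on the tree of segments (finite branching because $\sum_{[A_n]}\rr(A_n)\leq\rr(A)$ bounds the relevant branching), an infinite branch, i.e. the biinfinite geodesic $S$ above, and then the strict-descent issue is forced by the hyperbolic translate $g$: if $\stab(S_n)$ were eventually equal to a fixed non-cyclic $K$, then $K$ fixes a ray, $g^j K g^{-j}$ also fixes a shifted ray inside $S$, and comparing $K$ with $g K g^{-1}$ inside the vertex group $H$ via Theorem \ref{subgroup_Magnus_intersection} (both sit inside $\pi_1(A)$-type subgroups) would bound things in a way incompatible with $K$ being non-cyclic and the rays being infinite — this is essentially the statement that a non-cyclic subgroup cannot be \emph{cofinally} contained in the descending intersection $\bigcap_n (A\cap 1\cdot A\cap\dots\cap n\cdot A)$, the same phenomenon exploited in Lemma \ref{exceptional_stable}. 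Once strict descent is secured, replacing each term by a non-cyclic freely indecomposable free factor (possible since free factors of non-cyclic groups of bounded rank can be chosen compatibly along a subsequence) and invoking the non-conjugacy-into-$H$ from the hyperbolicity of $g$ completes the contrapositive, hence the theorem.
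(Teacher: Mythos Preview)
Your approach has a fundamental gap: the descending chain you build consists of \emph{stabilisers of segments} $\stab(S_n)$, and each such segment $S_n$ has the vertex $vH$ as an endpoint. Since $\stab(S_n)$ fixes every vertex of $S_n$, in particular it fixes $vH$, so $\stab(S_n) < \stab(vH) = H$. Every subgroup in your chain --- and hence every freely indecomposable Grushko factor you extract from it --- is therefore already a subgroup of $H$, and the hypothesis of the theorem is vacuously satisfied by your chain. Your attempted argument that the $H_k$ are ``not conjugate into $H$'' because they ``stabilise arbitrarily long segments'' conflates fixing a long segment with acting hyperbolically; a group fixing a segment pointwise is elliptic and sits inside every vertex stabiliser along that segment.

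The paper's proof uses the hyperbolic element $g$ in the opposite way: rather than going \emph{down} to segment stabilisers, it goes \emph{up} to the groups $H_k = \langle H, g^{2^k}\rangle$, which genuinely are not conjugate into $H$ (they contain the hyperbolic element $g^{2^k}$), have bounded rank, and form a strictly descending chain because their images under $\phi:G\to G/\normal{H}\cong\Z$ are the distinct subgroups $\langle \phi(g)2^k\rangle$. One then applies the hypothesis to the freely indecomposable Grushko factors $J_{i,j}$ of the $H_i$, concluding that after some stage each $J_{i,j}$ is either conjugate to some $J_{i-1,l}$ or conjugate into $H$; either way $\phi(J_{i,j})=0$ eventually. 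This forces $\phi\mid_{H_m}$ to factor through the free part $F(X_m)$ of the Grushko decomposition, so $H_m = \langle kg^{2^m}\rangle * K$ with $K<\ker(\phi)$ and trivial edge groups. In the associated Bass--Serre tree, $H$ is contained in a free product of finitely many conjugates of $K$ while $g^{2^m}$ acts hyperbolically with trivial edge stabilisers, giving $H\cap H^{g^{2^{km}}}=1$ for large $k$ --- contradicting the fact that all finite subsegments of $S$ have non-trivial stabiliser. The contradiction thus comes not from exhibiting a bad chain directly, but from showing that the hypothesis forces an \emph{eventual free splitting} separating $H$ from a power of $g$.
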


\begin{proof}
Suppose for contradiction that $s\Z(\psi) = \infty$. Let $S$ be a biinfinite geodesic as in Proposition \ref{biinfinite_stab}. Let $g\in G$ be the element acting by translations on $S$. Since every edge in $S$ points in the same direction, we have that $\phi(g)\neq 0$. Hence, by definition of $S$, $H\cap H^{g^n}$ is non-trivial and non-cyclic for all $n$.

Denote by $\phi:G\to G/\normal{H}\isom \Z$. Consider the descending chain of subgroups 
\[
G = \langle H, g\rangle>\langle H, g^2\rangle>...>\langle H, g^{2^n}\rangle>...
\]
where we denote by $H_n = \langle H, g^{2^n}\rangle$. Since $\rk(H_n)\leq\rk(H) +1$, the ranks are bounded. The chain must be proper since $\phi(H_i) = \langle \phi(g)2^i\rangle \neq \langle \phi(g)2^j\rangle = \phi(H_j)$ for all $i\neq j$. 

Each $H_i$ has a Grushko decomposition
\[
H_i = F(X_i)*J_{i, 1}*...*J_{i, n_i}
\]
that is unique up to permutation and conjugation of factors and where each $J_{i, j}$ is non-cyclic and freely indecomposable. By the Kurosh subgroup theorem, each $J_{i, j}$ is a conjugate of a subgroup of some $J_{i-1, l}$. Furthermore, we have that $\abs{X_i} + 2n_i\leq \rk(H) + 1$.

Now the claim is that for some integer $m\geq 0$ and for all $i>m$, each $J_{i, j}$ is either a conjugate of some $J_{i-1, l}$ or is conjugate into $H$. If this was not the case, then there would be infinite sequences of integers $i_0, i_1, i_2, ...$ and $j_0, j_1, j_2, ...$ and elements $g_1, g_2, ...\in G$ such that
\[
J_{i_0, j_0} > J_{i_1, j_1}^{g_1} > J_{i_2, j_2}^{g_2} > ...
\]
where the inclusions are all proper and no $J_{i_k, j_k}$ is conjugate into $H$. But this contradicts the hypothesis and so the claim is proven.

So now denote by $J_i = J_{i,1}*...*J_{i,n_i}$. For all $i\geq m$, we have that $\phi(J_i) = \langle l\rangle$ for some fixed $l$. But now since $\phi(H_i) = \langle \phi(g)2^i\rangle$, this forces $J_i< \ker(\phi)$. Thus, the induced homomorphism $\phi\mid_{H_m}$ factors through the projection $H_m\to H_m/\normal{J_m}\isom F(X_m)$. But now $\phi\left(g^{2^m}\right)$ is a generator of $\phi(H_m)$, so there is some primitive element $x_m\in F(X_m)$ that maps to a generator of $\phi(H_m)$ and such that $x_m = k_mg^{2^m}$ for some $k_m\in \ker(\phi\mid_{H_m})$. Since $x_m$ is primitive in $F(X_m)$, we have:
\[
H_m = \langle x_m\rangle *K_m = \left\langle k_mg^{2^m}\right\rangle * K_m
\]
for some subgroup $K_m<H_m$ such that $\normal{K_m}=\ker(\phi)$.

Now let $T_m$ be the Bass--Serre tree associated with the HNN-extension $\langle x_m\rangle *K_m$ with trivial edge groups. Each vertex is stabilised by a conjugate of $K_m$ and each edge has trivial stabiliser. Since $H<H_m$, $H<\ker(\phi)$ and $H$ is finitely generated, it follows that $H$ is a subgroup of a free product of finitely many conjugates of $K_m$. Since $g^{2^m}$ acts hyperbolically on $T_m$ and $T_m$ has trivial edge stabilisers, it follows that
\[
H\cap H^{g^{2^{nm}}} = 1
\]
for all $n$ sufficiently large. But then this contradicts the assumption that $s\Z(\psi) = \infty$.
\end{proof}

\begin{corollary}
\label{stable_negative}
All one-relator splittings of one-relator groups with negative immersions are $\Z$-stable.
\end{corollary}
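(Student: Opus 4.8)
The strategy is to verify the hypothesis of Theorem \ref{descending_stable}. By Lemma \ref{Magnus_free_factor} and Remark \ref{splitting_remark}, after possibly adjoining a free factor we may assume a one-relator splitting of $\pi_1(X)$ has the form $G\isom H*_{\psi}$ of (\ref{group}), where $H = \pi_1(X_1)$ for a one-relator complex $X_1$ admitting an immersion $X_1\immerses X$. Since having negative immersions is inherited by one-relator complexes that immerse into $X$, both $X$ and $X_1$ have negative immersions. Thus it remains to show: in a one-relator group $G$ with negative immersions, every descending chain $G = H_0>H_1>\dots$ of non-cyclic, freely indecomposable, proper subgroups of bounded rank is finite or eventually conjugate into $H$.

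Let $T$ be the Bass--Serre tree of $G\isom H*_{\psi}$. If some $H_i$ is elliptic, then so is every $H_j$ with $j\geq i$, and the chain is eventually conjugate into $H$, so we may assume no $H_i$ is elliptic. Each $H_i$ is then finitely generated, non-cyclic, freely indecomposable and acts on $T$ without a fixed point, hence acts on its minimal invariant subtree $T_i$ with at least one orbit of edges and with all edge stabilisers non-trivial --- otherwise Bass--Serre theory would realise $H_i$ as a non-trivial free product. I would then attach to each non-elliptic $H_i$ a complexity lying in a fixed well-ordered set, built from $\rr(H_i)$, the combinatorics of the quotient graph of groups $H_i\backslash T_i$, and the reduced ranks of its vertex and edge groups. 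The edge groups are intersections of $H_i$ with conjugates of the strongly inert subgroup $A$, so Theorem \ref{subgroup_Magnus_intersection}, the rank bound on the $H_i$, and the fact that $X_1$ has negative immersions (which forces every non-free, freely indecomposable vertex group of $H_i\backslash T_i$ to make a definite negative contribution to the Euler characteristic) are the tools that keep this complexity bounded; the aim is to show it strictly decreases along the chain, forcing finiteness.

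The main obstacle is exactly this last point --- producing a genuine monovariant, since reduced rank itself need not decrease under passing to subgroups. I expect the cleanest route is to mimic the Grushko--Kurosh bookkeeping in the proof of Theorem \ref{descending_stable}: follow a single non-free, freely indecomposable Grushko factor as it descends, use the edge-group estimates above to control how it can sit across $T$, and apply the primitivity-rank characterisation of negative immersions due to Louder and Wilton --- which in particular forbids the non-free two-generator one-relator subgroups that a periodic tail of the chain would create --- to conclude that after finitely many steps the factor either stabilises (contradicting proper descent) or becomes elliptic; alternatively, if a chain condition on non-free finitely generated subgroups of bounded rank in one-relator groups with negative immersions can be quoted directly, the hypothesis of Theorem \ref{descending_stable} follows immediately. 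Either way, Theorem \ref{descending_stable} then gives $s(\psi)<\infty$ for every one-relator splitting of $\pi_1(X)$, which is the assertion of the corollary.
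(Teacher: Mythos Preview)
Your overall strategy --- verify the hypothesis of Theorem \ref{descending_stable} --- is exactly right, and your setup via Lemma \ref{Magnus_free_factor} and Remark \ref{splitting_remark} matches the paper. The difference is that you then attempt to \emph{prove} the required descending chain condition from scratch, whereas the paper simply quotes it: \cite[Theorem B]{louder_21_uniform} of Louder--Wilton asserts precisely that in a one-relator group with negative immersions, every strictly descending chain of non-cyclic, freely indecomposable subgroups of bounded rank terminates. This is a substantial theorem whose proof goes through their uniform negative immersions machinery; your proposed monovariant built from $\rr(H_i)$ and the graph-of-groups data of $H_i\backslash T_i$ is not enough to recover it, and you yourself flag the obstacle (``producing a genuine monovariant'') without resolving it. The tools you list --- Theorem \ref{subgroup_Magnus_intersection}, the primitivity-rank characterisation --- do not by themselves force the complexity to strictly decrease at each step.

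You do say at the end that ``if a chain condition on non-free finitely generated subgroups of bounded rank in one-relator groups with negative immersions can be quoted directly, the hypothesis of Theorem \ref{descending_stable} follows immediately.'' That is exactly what the paper does, and the entire proof is then two sentences. A minor point: your aside that negative immersions passes to $X_1$ along the immersion $X_1\immerses X$ is true but unnecessary here --- the chain condition is applied inside $G=\pi_1(X)$, not inside $H=\pi_1(X_1)$.
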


\begin{proof}
The result follows from a result of Louder--Wilton \cite[Theorem B]{louder_21_uniform} and Theorem \ref{descending_stable}.
\end{proof}

\subsection{The graph of cyclic stabilisers and Baumslag--Solitar subgroups}
\label{sec:BS_criterion}

To any inertial one-relator extension $H*_{\psi}$, we now describe how to associate a unique graph of cyclic groups $\mathcal{G}$. This graph of cyclic groups will be called the graph of cyclic stabilisers of $H*_{\psi}$ and will encode relations between cyclic stabilisers of segments leading out of $H$. Moreover, certain Baumslag--Solitar subgroups of $H*_{\psi}$ can be read off from $\mathcal{G}$.

Denote by
\begin{align*}
\mathcal{A} &= \{[\langle a\rangle]_A \mid A\cap \stab(S)\leqslant \langle a\rangle , S\in \mathcal{S}, \text{ $a$ not a proper power in $A$}\}\\
\mathcal{B} &= \{[\langle b\rangle]_B \mid B\cap \stab(S)\leqslant \langle b\rangle, S\in \mathcal{S}, \text{ $b$ not a proper power in $B$}\}
\end{align*}
In other words, $\mathcal{A}$ (respectively $\mathcal{B}$) is the set of conjugacy classes in $A$ (respectively in $B$) of maximal cyclic subgroups of $A$ (respectively in $B$) which contain a stabiliser $\stab(S)$ for some $S\in \mathcal{S}$.

We now define \emph{the graph of cyclic stabilisers} $\mathcal{G} = (\Gamma, \{\langle c_v\rangle\}, \{\langle c_e\rangle\}, \{\partial_e^{\pm}\})$ as follows. Identify $V(\Gamma)$ with the set $\mathcal{A}\sqcup \mathcal{B}$. Choose any map $\nu:V(\Gamma)\to H$ sending an element $[\langle a\rangle]_A\in \mathcal{A}$ to a generator of a representative $a\in A$ and $[\langle b\rangle]_B\in \mathcal{B}$ to a generator of a representative $b\in B$. There are two types of edges: \emph{$H$-edges} and \emph{$t$-edges}.

For each pair of vertices $v, w\in V(\Gamma)$ and each double coset $\langle \nu(v)\rangle h\langle\nu(w)\rangle$, such that $h\in H$ or $h\in At^{-1}B\sqcup BtA$ and
\[
\langle \nu(v)\rangle\cap \langle \nu(w)\rangle^{h}\neq 1,
\]
there is an edge $e$ connecting $v$ and $w$. We further assume that if $v, w\in \mathcal{A}$ or $v, w\in \mathcal{B}$, then $h\notin\langle \nu(v)\rangle\langle\nu(w)\rangle$. Then the boundary maps $\partial_e^{\pm}$ are induced by the monomorphisms 
\begin{align*}
\langle \nu(v)\rangle\cap \langle \nu(w)\rangle^{h}&\injects \langle \nu(v)\rangle\\
\langle \nu(v)\rangle^{h^{-1}}\cap \langle \nu(w)\rangle&\injects \langle \nu(w)\rangle
\end{align*}
If $h\in H$, then we say that $e$ is an \emph{$H$-edge} and if $h\in At^{-1}B\sqcup BtA$, we say that $e$ is a \emph{$t$-edge}. All edges between vertices in $\mathcal{A}$ and $\mathcal{B}$ are oriented towards $\mathcal{B}$ and a choice of orientation is made for the remaining edges.

\begin{remark}
The isomorphism class of $\mathcal{G}$ as a graph of groups depends only on $\psi$. This follows from the construction and justifies $\mathcal{G}$ being called \emph{the} graph of cyclic stabilisers of $H*_{\psi}$.
\end{remark}

Choose any map $\xi:E(\Gamma)\to G$ such that $\xi(e)\in \langle \nu(o(e))\rangle h\langle\nu(t(e))\rangle$ where $\langle \nu(o(e))\rangle h\langle\nu(t(e))\rangle$ is the double coset associated with $e$. For all $v\in V(\Gamma)$, we may define a group homomorphism 
\[
\mu:\pi_1(\mathcal{G}, v)\to H*_{\psi}
\] 
induced by the choices $\nu, \xi$ as follows.

Each element of $\pi_1(\mathcal{G}, v)$ can be represented by words of the form 
\[
c_{v_0}^{i_0} e^{\epsilon_1}_1 c_{v_1}^{i_1}... e^{\epsilon_n}_n c_{v_n}^{i_n}
\]
where $v_0, v_n = v$, $e_i\in E(\Gamma)$, $\epsilon_i = \pm1$, and $t(e_i) = v_i = o(e_{i+1})$. Then we define
\[
\mu(c_{v_0}^{i_0} e^{\epsilon_1}_1 c_{v_1}^{i_1}... e^{\epsilon_n}_n c_{v_n}^{i_n}) = \nu(v_0)^{i_0}\xi(e_1)^{\epsilon_1}\nu(v_1)^{i_1}...\xi(e_n)^{\epsilon_n}\nu(v_n)^{i_n}
\]
One can check that this is well defined and depends only on $\nu, \xi$. We will call $\mu$ \emph{the homomorphism induced by $\nu, \xi$}.

A path $\gamma:I\to \Gamma$ is \emph{alternating} if it does not traverse two $H$-edges or $t$-edges in a row. An \emph{$H$-path} is a path $\gamma:I\to \Gamma$ that only traverses $H$-edges. We say a word $e^{\epsilon_1}_1 c_{v_1}^{i_1}... e^{\epsilon_n}_n c_{v_n}^{i_n}$ is an \emph{alternating word} if $e^{\epsilon_1}_1*e^{\epsilon_2}_2*...*e^{\epsilon_n}_n$ is an alternating path. Recall that a word $c_{v_0}^{i_0}e^{\epsilon_1}_1 c_{v_1}^{i_1}... e^{\epsilon_n}_n c_{v_n}^{i_n}$ is \emph{cyclically reduced} if all of its cyclic permutations $e^{\epsilon_j}_{j}c_{v_j}^{i_j}... e^{\epsilon_n}_n c_{v_n}^{i_n+i_0}...e^{\epsilon_{j-1}}_{j-1}c_{v_{j-1}}^{i_{j-1}}$ are also reduced. A word is \emph{cyclically alternating} if all of its cyclic permutations are also alternating.

\begin{lemma}
\label{mu_reduced}
If $c = e^{\epsilon_1}_1 c_{v_1}^{i_1}... e^{\epsilon_n}_n c_{v_n}^{i_n}$ is a cyclically reduced and cyclically alternating word, then $\mu(c)$ is a cyclically reduced word. In particular, if $n\geq 2$, then $\langle\mu(c_{v_n}), \mu(c)\rangle$ contains a Baumslag--Solitar subgroup, not conjugate into $H$.
\end{lemma}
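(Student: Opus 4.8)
The plan is to run a Britton's-Lemma / normal-form argument for the HNN-extension $G = H*_{\psi}$ and then read off the Baumslag--Solitar subgroup from the action on the Bass--Serre tree $T$. First I would expand
\[
\mu(c) = \xi(e_1)^{\epsilon_1}\,\nu(v_1)^{i_1}\,\xi(e_2)^{\epsilon_2}\,\nu(v_2)^{i_2}\cdots \xi(e_n)^{\epsilon_n}\,\nu(v_n)^{i_n}
\]
and observe that the letters $t^{\pm1}$ occurring in this word are in bijection with the $t$-edges among $e_1,\dots,e_n$, while the $H$-edges and the vertex powers $\nu(v_j)^{i_j}$ contribute only letters of $H$. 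Since $c$ is cyclically alternating, any two cyclically consecutive $t$-letters of $\mu(c)$ are separated by a block $u\,\xi(e)^{\epsilon}\,u'$ with $e$ an $H$-edge, where $u,u'$ lie in $A$ or in $B$ according to the side ($\mathcal A$ or $\mathcal B$) of the adjacent vertices and $\xi(e)^{\epsilon}\in\langle\nu(v)\rangle\,h\,\langle\nu(w)\rangle$ for the double-coset representative $h\in H$ of $e$. A case analysis on the signs of the flanking $t$-letters and the sides of $v,w$ then shows that this subword admits a Britton cancellation only if $h$ is forced to lie in $\langle\nu(v)\rangle\langle\nu(w)\rangle$ (when $v,w$ are on the same side) or in a position excluded by the overlap hypothesis $\langle\nu(v)\rangle\cap\langle\nu(w)\rangle^{h}\neq1$ — both of which are precisely what is ruled out by the conditions imposed on $H$-edges in the construction of $\mathcal G$. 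Hence $\mu(c)$ is a reduced word, and applying the same argument to the cyclic word (again using that $c$ is cyclically alternating) shows $\mu(c)$ is cyclically reduced; here I would lean on Theorems \ref{exceptional_intersection} and \ref{conjugates_intersection} and on the freeness of $A$ and $B$ to control the relevant products of Magnus subgroups.

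For the ``in particular'' clause: by the first part, $g := \mu(c)$ acts hyperbolically on $T$ with axis through the base vertex $v_\ast = H$ and translation length equal to the number of $t$-edges of $c$, which is positive because $n\geq2$ and $c$ is alternating; and $a := \mu(c_{v_n}) = \nu(v_n)$ lies in $A\cup B\subset H$, hence fixes $v_\ast$. The edges $e_1$ and $e_n$ of $\mathcal G$ incident to $v_0 = v_n$ have edge groups of finite index in $\langle\nu(v_0)\rangle$, and the overlap conditions attached to all the edges of the loop $c$ chain together — the powers $\nu(v_j)^{i_j}$ commuting with the cyclic subgroups containing them — to produce integers $p,q\neq0$ with $\mu(c)\,a^{p}\,\mu(c)^{-1} = a^{q}$ in $G$. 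Then $\langle a, g\rangle\leq\langle\mu(c_{v_n}),\mu(c)\rangle$ contains a copy of $\bs(p,q)$, and since $g$ is hyperbolic this subgroup is not conjugate into $H$ (which is elliptic, fixing $v_\ast$).

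The main obstacle is the bookkeeping in the first part: one has to check, across all combinations of edge types, signs $\epsilon_j$, and $\mathcal A/\mathcal B$-sides of the vertices, that the conditions on $H$-edges are exactly strong enough to preclude every Britton cancellation, and simultaneously that these same overlap conditions close up coherently around the loop $c$ in the second part — so that one obtains the conjugacy relation displayed above rather than merely a chain of unrelated conjugations. Once the combinatorics of $\mathcal G$ is set up carefully this should be routine, but it is where all the care is needed.
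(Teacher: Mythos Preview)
Your plan for the first part (cyclic reducedness of $\mu(c)$) is essentially the paper's argument: a Britton's-Lemma case analysis showing that any putative pinch forces the $H$-edge data to violate the defining constraints of $\mathcal{G}$. Two minor points: your invocation of Theorems~\ref{exceptional_intersection} and~\ref{conjugates_intersection} is unnecessary here --- the relevant fact is purely that maximal cyclic subgroups of the free groups $A$ and $B$ are malnormal, which forces $v_{2j-1}=v_{2j}$ and then $\xi(e_{2j})\in\langle\nu(v_{2j})\rangle$, contradicting the $H$-edge condition $h\notin\langle\nu(v)\rangle\langle\nu(w)\rangle$. Also, your remark about the ``overlap hypothesis'' is confused: that hypothesis is what makes the edge \emph{exist}, not what excludes the pinch.

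The genuine gap is in the ``in particular'' clause. From $g\,a^{p}\,g^{-1}=a^{q}$ you only obtain a \emph{surjection} $\bs(p,q)\twoheadrightarrow\langle a,g\rangle$; quotients of $\bs(p,q)$ need not contain any Baumslag--Solitar subgroup, and you have not argued injectivity. The paper closes this gap by a case split on the minimal exponents $k,l$ with $g^{-1}a^{k}g=a^{l}$. If one of $k,l$ equals $\pm1$, then $\langle a,g\rangle\cong\bs(1,m)$: here the solvable structure of $\bs(1,m)$ together with the tree action ($g$ hyperbolic through the base vertex, $a$ elliptic of infinite order) forces the natural map to be injective. If $k,l\neq\pm1$, the paper instead exhibits a $\Z^{2}$ directly: one checks that the commutator $[g,a]$ is again cyclically reduced (hence hyperbolic on $T$) and commutes with $a^{l}$, so that $\langle a^{l},[g,a]\rangle\cong\Z^{2}$ by comparing elliptic and hyperbolic elements. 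Either branch then produces a Baumslag--Solitar subgroup not conjugate into $H$. You need to supply this case analysis, or otherwise justify why your map $\bs(p,q)\to G$ is injective.
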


\begin{proof}
If $n = 1$ the result is clear, so suppose that $n\geq 2$. We may assume that $e_1$ is a $t$-edge. Taking indices modulo $n$, we have that $\mu(c)$ is not cyclically reduced if and only if $\epsilon_{2j-1} = -\epsilon_{2j+1} = 1$ for some $j$ and
\[
\nu(v_{2j-1})^{i_{2j-1}} \xi(e_{2j})^{\epsilon_{2j}} \nu(v_{2j})^{i_{2j}}\in A
\]
or $\epsilon_{2j-1} = -\epsilon_{2j+1} = -1$ for some $j$ and
\[
\nu(v_{2j-1})^{i_{2j-1}} \xi(e_{2j})^{\epsilon_{2j}} \nu(v_{2j})^{i_{2j}}\in B.
\]
In the first case, $\nu(v_{2j-1})^{i_{2j-1}} \xi(e_{2j})^{\epsilon_{2j}} \nu(v_{2j})^{i_{2j}}\in A$ if and only if $\xi(e_{2j})^{\epsilon_{2j}}\in A$. But this would then imply that $v_{2j-1} = v_{2j}$. Since $A$ is free and $\nu(v_{2j})$ generates a maximal cyclic subgroup, it follows that $\xi(e_{2j})^{\epsilon_{2j}}\in \langle\nu(v_{2j})\rangle$, a contradiction. The same argument is valid for the other case. Thus, when $n\geq 2$, $\mu(c)$ acts hyperbolically on $T$.

Now, by definition of $\mathcal{G}$, there are integers $k, l$, such that
\[
\mu(c)^{-1}\mu(c_{v_n})^k\mu(c) = \mu(c_{v_n})^l.
\]
We may assume that $k$ and $l$ are smallest possible. If $k = \pm 1$ or $l = \pm1$, then $\langle \mu(c), \mu(c_{v_n})\rangle \isom \bs(1, \pm l)$ or $\bs(1, \pm k)$ respectively. Since $\mu(c)$ acts hyperbolically on $T$, it follows that this Baumslag--Solitar subgroup is not conjugate into $H$. If $k, l\neq \pm 1$, since $\mu(c)$ is cyclically reduced, we have that $[\mu(c), \mu(c_{v_n})]$ is also cyclically reduced. Thus, $\langle \mu(c_{v_n})^l, [\mu(c), \mu(c_{v_n})]\rangle\isom \Z^2$. As before, this copy of $\Z^2$ cannot be conjugate into $H$.
\end{proof}

\begin{example}
\label{BS_example}
Consider the following one-relator splitting:
\begin{align*}
\langle a, b, t \mid ta^2t^{-1}bab^{-1}tat^{-1}bab^{-1}\rangle&\isom \langle x, y, z, t \mid txt^{-1} = y, y^2zxz^{-1}yzxz^{-1}\rangle\\
													&\isom \langle x, y, z \mid y^2zxz^{-1}yzxz^{-1}\rangle*_{\psi}
\end{align*}
This one-relator splitting is also a one-relator hierarchy of length one as $y^2zxz^{-1}yzxz^{-1}$ is a primitive element of $F(x, y, z)$.

Since the edge groups of this splitting are cyclic, it follows that $\mathcal{G}$ has two vertices, one corresponding to $[\langle x\rangle]_{\langle x\rangle}$ and the other to $[\langle y\rangle]_{\langle y\rangle}$ and a $t$-edge connecting them. Since $y^2zxz^{-1}yzxz^{-1}$ is a primitive word over $y, zxz^{-1}$, we see that $zx^2z^{-1} = y^{-3}$. Hence there is an edge connecting $[\langle x\rangle]_{\langle x\rangle}$ with $[\langle y\rangle]_{\langle y\rangle}$ corresponding to conjugation by $z$, and where the edge monomorphisms are given by multiplication by $2$ and $-3$ respectively. See Figure \ref{gocg} for the graph of cyclic groups $\mathcal{G}$.

We see that $\pi_1(\mathcal{G})$ has a cyclically alternating and cyclically reduced word $zt^{-1}$ and so our one-relator group contains a Baumslag--Solitar subgroup by Lemma \ref{mu_reduced}. More explicitly, we have
\[
\langle x, zt^{-1} \rangle = \langle a, bt^{-1}\rangle\isom \bs(2, -3).
\]
\end{example}

\begin{figure}
\centering
\includegraphics[scale=1.2]{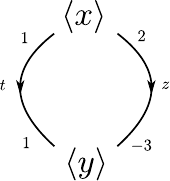}
\caption{The graph of cyclic stabilisers $\mathcal{G}$}
\label{gocg}
\end{figure}

We will denote by $\mathcal{G}_k$ the full subgraph of groups of $\mathcal{G}$ on the vertices corresponding to maximal cyclic subgroups containing some $\stab(S)$ where $S\in \mathcal{S}^i$ for some $i\leq k$.

\begin{lemma}
\label{finite_cyclic_graph}
Let $H*_{\psi}$ be an inertial one-relator extension with $H$ hyperbolic and with both edge groups quasi-convex in $H$. Then the graph of cyclic stabilisers $\mathcal{G}$ is locally finite. Moreover, if $s\Z(\psi)<\infty$, then $\mathcal{G} = \mathcal{G}_{s\Z(\psi)}$ is finite.
\end{lemma}

\begin{proof}
We first show that $\mathcal{G}$ is locally finite. Each vertex has at most one $t$-edge connected to it, so it suffices to only bound the number of $H$-edges connected to any given vertex. Such a bound follows from \cite[Proposition 6.7]{kharlampovich_17} (alternatively, one could use a slight modification of \cite{gitik_98}).

Since $\mathcal{G}$ is locally finite, it follows from the construction that $\mathcal{G}_k$ is finite for all $k$. We claim that $\mathcal{G} = \mathcal{G}_k$ for all $k\geqslant s\Z(\psi)$. Denoting by
\[
\mathcal{S}^{\geqslant n} = \mathcal{S} - \bigcup_{i< n}\mathcal{S}^i
\]
we have that $\stab(S)$ is either trivial or infinite cyclic for all $S\in \mathcal{S}^{\geqslant s\Z(\psi)}$ by Lemma \ref{stab_ranks}. Hence, for all $S\in\mathcal{S}^{\geqslant s\Z(\psi)+1}$, there is some $S'\in \mathcal{S}^{s\Z(\psi)}$ such that $\stab(S)<\stab(S')$. The result follows.
\end{proof}

Lemma \ref{mu_reduced} told us that certain Baumslag--Solitar subgroups of $H*_{\psi}$ could be read off from its graph of cyclic stabilisers. Although we may not find all such subgroups in this way, we now show that under certain conditions, if $H*_{\psi}$ does contain Baumslag--Solitar subgroups, then Lemma \ref{mu_reduced} will always produce a witness.

\begin{theorem}
\label{acylindrical_equivalent}
Let $H*_{\psi}$ be an inertial one-relator extension. Suppose that $s\Z(\psi)<\infty$, that $H$ is hyperbolic and that $A$ and $B$ are quasi-convex in $H$. The following are equivalent:
\begin{enumerate}
\item\label{itm:acylindrical} $G$ acts acylindrically on $T$.
\item\label{itm:BS_subgroup} $G$ does not contain any Baumslag--Solitar subgroups.
\item\label{itm:reduced_word} $\pi_1(\mathcal{G})$ admits no cyclically alternating and cyclically reduced word.
\end{enumerate}
\end{theorem}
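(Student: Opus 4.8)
The plan is to establish the cycle of implications $(\ref{itm:acylindrical})\Rightarrow(\ref{itm:BS_subgroup})\Rightarrow(\ref{itm:reduced_word})\Rightarrow(\ref{itm:acylindrical})$, working throughout with the Bass--Serre tree $T$ of $H*_{\psi}$ (vertex stabilisers the conjugates of $H$, edge stabilisers the conjugates of $A\isom B$) and the graph of cyclic stabilisers $\mathcal{G}$. The key inputs will be Lemma \ref{mu_reduced}, Theorem \ref{cyclic_graph}, and the intersection theorems \ref{conjugates_intersection} and \ref{subgroup_Magnus_intersection}. For $(\ref{itm:reduced_word})\Rightarrow(\ref{itm:acylindrical})$ the hypotheses $s(\psi)<\infty$, $H$ hyperbolic, $A,B$ quasi-convex are what let us invoke Theorem \ref{cyclic_graph} to conclude that $\mathcal{G}$ is a \emph{finite} graph of groups with $\mathcal{G}=\mathcal{G}_{s(\psi)}$, and Lemma \ref{stab_ranks} to conclude that all sufficiently long segments have cyclic stabiliser.

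\textbf{$(\ref{itm:acylindrical})\Rightarrow(\ref{itm:BS_subgroup})$.} Suppose $G$ acts $k$-acylindrically on $T$ and, for a contradiction, that $Q=\langle a,u\mid ua^{p}u^{-1}=a^{q}\rangle<G$ with $p,q\neq 0$; then $Q$ acts $k$-acylindrically on $T$ as well. The element $a$ must be elliptic, for otherwise the conjugate elements $a^{p}$ and $a^{q}$ would have distinct translation lengths $|p|\ell\neq|q|\ell$. Consider $P=\bigcup_{l\geq 1}\fix(a^{l})$, the set of points of $T$ fixed by some nonzero power of $a$: it is a subtree (all $\fix(a^l)$ contain $\fix(a)$), and any two of its points lie in some common $\fix(a^{l})$, which has diameter less than $k$ since it is pointwise fixed by the infinite group $\langle a^{l}\rangle$; hence $P$ has finite diameter. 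Using $ua^{pm}u^{-1}=a^{qm}$ one checks $uP=P$, and $aP=P$ since $a$ commutes with all its powers, so $Q$ stabilises $P$. A group with bounded orbits on a tree fixes the circumcentre of an orbit, so $Q$ fixes a point of $T$, whose stabiliser is a conjugate of $H$ or of $A$; thus $Q$ embeds in a conjugate of $H$. But a hyperbolic group contains no Baumslag--Solitar subgroup (such a subgroup contains $\Z^{2}$ if $|p|=|q|$, and otherwise an infinite-order element conjugate to an unequal power of itself), a contradiction.

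\textbf{$(\ref{itm:BS_subgroup})\Rightarrow(\ref{itm:reduced_word})$.} We prove the contrapositive. Suppose $\pi_{1}(\mathcal{G})$ admits a cyclically alternating and cyclically reduced word $c$. After replacing $c$ by a suitable power, or in the degenerate case that $c$ traverses a single (necessarily loop) edge arguing directly from the construction of $\mathcal{G}$ together with hyperbolicity of $H$ and the maximality of the cyclic subgroups $\langle\nu(v)\rangle$, we may assume $c=e_{1}^{\epsilon_{1}}c_{v_{1}}^{i_{1}}\cdots e_{n}^{\epsilon_{n}}c_{v_{n}}^{i_{n}}$ with $n\geq 2$. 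Lemma \ref{mu_reduced} then exhibits a Baumslag--Solitar subgroup $\langle\mu(c_{v_{n}}),\mu(c)\rangle<G$, contradicting $(\ref{itm:BS_subgroup})$.

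\textbf{$(\ref{itm:reduced_word})\Rightarrow(\ref{itm:acylindrical})$; this is the main obstacle.} Arguing the contrapositive, assume $G$ does not act acylindrically on $T$, so for every $n$ there is a segment $S_{n}\subset T$ of length $n$ with $\stab(S_{n})$ infinite; translating, we may take $S_{n}\in\mathcal{S}^{n}_{H}$. By Lemma \ref{stab_ranks} and $s(\psi)<\infty$, for all large $n$ the stabiliser $\langle z_{n}\rangle:=\stab(S_{n})$ is infinite cyclic. Now trace $\langle z_{n}\rangle$ along $S_{n}$: at each vertex $gH$ of $S_{n}$ the element $g^{-1}z_{n}g\in H$ lies in the intersection of two conjugates of $A$ or $B$, which by the construction of $\mathcal{G}$ records an $H$-edge, while each crossing of an edge of $T$ records a $t$-edge. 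This produces a reduced alternating walk $W_{n}$ in $\mathcal{G}$ of length comparable to $n$. Since $\mathcal{G}$ is finite (Theorem \ref{cyclic_graph}), a phase-respecting pigeonhole on $W_{n}$ extracts a closed sub-walk that is cyclically reduced and cyclically alternating, i.e.\ a word forbidden by $(\ref{itm:reduced_word})$, contradiction. The delicate points here — which I expect to be where the real work lies — are, first, verifying that $W_{n}$ is genuinely alternating and reduced (this relies on distinct edges of $T$ at a common vertex corresponding to distinct vertices of $\mathcal{G}$, which in turn uses that $\mathcal{A}$ and $\mathcal{B}$ are kept as separate vertex sets), and second, arranging the pigeonhole so that the extracted closed sub-walk is simultaneously cyclically reduced \emph{and} cyclically alternating, since naive backtrack removal can destroy alternation.
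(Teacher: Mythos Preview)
Your cycle of implications and the handling of $(\ref{itm:BS_subgroup})\Rightarrow(\ref{itm:reduced_word})$ via Lemma~\ref{mu_reduced} match the paper. For $(\ref{itm:acylindrical})\Rightarrow(\ref{itm:BS_subgroup})$ you give a self-contained fixed-point argument, whereas the paper simply invokes \cite[Theorem~2.1]{minasyan_15}; your argument has a small gap --- the translation-length inequality $|p|\ell\neq|q|\ell$ only forces $a$ elliptic when $|p|\neq|q|$ --- but this is easily repaired (if $|p|=|q|$ and $a$ is hyperbolic then $u$ preserves the axis of $a$, and the kernel of the resulting action of $Q$ on a line is infinite, again contradicting acylindricity).

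The substantive difference is in $(\ref{itm:reduced_word})\Rightarrow(\ref{itm:acylindrical})$. You trace the cyclic stabiliser $\langle z_n\rangle$ edge-by-edge along a long segment $S_n$ to produce an alternating walk $W_n$ in $\mathcal{G}$, then phase-pigeonhole on $W_n$. The paper instead pigeonholes on the \emph{endpoints}: it views each $\stab(g_i^{-1}S_i)$ as landing in a single vertex $v_i\in V(\Gamma)$, finds \emph{three} indices $i<j<k$ with $v_i=v_j=v_k=v$, forms two conjugators $f,g$ with $\langle\nu(v)\rangle\cap\langle\nu(v)\rangle^{f},\,\langle\nu(v)\rangle\cap\langle\nu(v)\rangle^{g}\neq 1$, and then uses a midpoint argument in $T$ to show that $f$, $g$ and $fg$ cannot all be elliptic. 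The cyclic reduction of the hyperbolic one yields the forbidden word. Your approach is more local and arguably more transparent; the paper's is shorter because the midpoint trick replaces the bookkeeping of the walk. Regarding your stated worries: they are largely unfounded. Since consecutive edges in $W_n$ are of different type ($H$ versus $t$), they can never be mutually inverse, so $W_n$ is automatically reduced in the graph-of-groups sense and no backtrack removal is needed; a phase-respecting repeat then gives an even-length closed subwalk which is cyclically alternating and hence cyclically reduced for the same reason. The one point that does need care is verifying that the $H$-edge at each interior vertex of $S_n$ genuinely exists in $\mathcal{G}$ when both adjacent edges of $S_n$ lie on the same side (both $A$-type or both $B$-type): here one must check that the conjugator $h$ lies outside $\langle\nu(v)\rangle\langle\nu(w)\rangle$, which follows because distinct edges of $T$ at $gH$ correspond to distinct $A$- (or $B$-) cosets, forcing $h\notin A$ (resp.\ $h\notin B$).
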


\begin{proof}
We prove that (\ref{itm:reduced_word}) implies (\ref{itm:acylindrical}), that (\ref{itm:acylindrical}) implies (\ref{itm:BS_subgroup}) and that (\ref{itm:BS_subgroup}) implies (\ref{itm:reduced_word}) by proving the contrapositive statements.

Let $\mathcal{G}$ be the graph of cyclic stabilisers of $\psi$ and let $\nu:V(\Gamma)\to H$ be a choice of representatives. Suppose that $G$ does not act acylindrically on $T$. Then for any $n$, there exists a sequence of geodesic segments $S_1\subset S_2\subset...\subset S_n$ with $S_i\in \mathcal{S}^i$, each with infinite stabiliser. Let $g_i\in G$ be an element such that the endpoints of $S_i$ are $H$ and $g_iH$. For all $i\geq s\Z(\psi)$, by Lemma \ref{stab_ranks}, $\stab(S_i)$ is cyclic. By definition, each subgroup $\stab(g_i^{-1}\cdot S_i)$ is $H$-conjugate into some $\langle \nu(v)\rangle$ where $v\in V(\Gamma)$. Since $\mathcal{G}$ is finite by Lemma \ref{finite_cyclic_graph}, by the pigeonhole principle there are three integers $s\Z(\psi)<i<j<k$ such that $\stab(g_i^{-1}S_i)$, $\stab(g_j^{-1}S_i)$ and $\stab(g_k^{-1}S_i)$ are $H$-conjugate into some $\langle \nu(v)\rangle$. Thus, there are elements $h_1, h_2, h_3, h_4$ such that:
\begin{align*}
\langle \nu(v)\rangle^{g_jh_2} \cap \langle \nu(v)\rangle^{g_ih_1}&\neq 1,\\
\langle \nu(v)\rangle^{g_kh_4}\cap \langle \nu(v)\rangle^{g_ih_3}&\neq 1.
\end{align*}
In particular, if $f = g_ih_1h_2^{-1}g_j^{-1}$ and $g = g_ih_3h_4^{-1}g_k^{-1}$, then
\begin{align*}
\langle \nu(v)\rangle \cap \langle \nu(v)\rangle^{f}&\neq 1,\\
\langle \nu(v)\rangle \cap \langle \nu(v)\rangle^{g}&\neq 1,\\
\langle \nu(v)\rangle \cap \langle \nu(v)\rangle^{fg}&\neq 1.
\end{align*}
Suppose that both $f$ and $g$ act elliptically on $T$. If they do not both fix a common vertex, then $fg$ acts hyperbolically on $T$. So suppose that they both fix a common vertex $u$. Now the geodesic connecting $u$ with $S_k$ must meet $S_k$ at the midpoint between $g_iH$ and $g_jH$ and the midpoint between $g_iH$ and $g_kH$. Since $j<k$, this is not possible and so we may assume that one of $f$, $g$ or $fg$ acts hyperbolically on $T$. The cyclic reduction of $f$, $g$ or $fg$ provides us with a cyclically alternating and cylically reduced word in $\pi_1(\mathcal{G})$. Thus, (\ref{itm:reduced_word}) implies (\ref{itm:acylindrical}).

Now suppose that $G$ contains a Baumslag--Solitar subgroup $J<G$. Since $H$ is hyperbolic and so cannot contain a Baumslag--Solitar subgroup, $J$ cannot act elliptically on $T$. It follows from \cite[Theorem 2.1]{minasyan_15} that $J$ cannot act acylindrically on $T$. Hence, $G$ does not act acylindrically on $T$ and (\ref{itm:acylindrical}) implies (\ref{itm:BS_subgroup}).

Finally, Lemma \ref{mu_reduced} shows that (\ref{itm:BS_subgroup}) implies (\ref{itm:reduced_word}).
\end{proof}

\begin{example}
\label{no_BS_example}
Consider Example \ref{stable_example} from the introduction:
\[
H*_{\psi} = \langle x, y, z \mid z^2yz^2x^{-2} \rangle*_{\psi}
\]
where $\psi:A\to B$ is given by $\psi(x) = y$, $\psi(y) = z$. We showed that $H*_{\psi}$ is $\Z$-stable and that the vertex group was freely generated by $x, z$. We see that the edge groups are generated by $x, z^{-2}x^2z^{-2}$ and $x^2, z$ respectively. Every subgroup of the form $A\cap B^g$, $A\cap A^g$ or $B\cap B^g$ is either trivial, or conjugate to $A$, $B$ or one of the following:
\begin{align*}
\langle x, z^{-2}x^2z^{-2}\rangle\cap \langle x^2, z\rangle &= \langle x^2, z^{-2}x^2z^{-2}\rangle,\\
\langle x, z^{-2}x^2z^{-2}\rangle\cap \langle x^2, z\rangle^x &= \langle x^2\rangle,\\
\langle x^2, z\rangle\cap \langle x^2, z\rangle^x & = \langle x^2\rangle.
\end{align*}
By applying $\psi$ to $x$ and $\psi^{-1}$ to $x^2 = z^2yz^2$, we see that the vertices in $\Gamma$ corresponding to stabilisers of elements in $\mathcal{S}^2$ are the $\mathcal{B}$-vertex $[\langle y\rangle]_{B}$ and the $\mathcal{A}$-vertices $[\langle y\rangle]_{A}$ and $[\langle y^2xy^2\rangle]_{A}$. The graph of cyclic stabilisers can be seen in Figure \ref{gocg_no_BS}. Since $\mathcal{G}$ does not admit any cyclically alternating and cyclically reduced words, by Theorem \ref{acylindrical_equivalent} we see that $H*_{\psi}$ does not contain any Baumslag--Solitar subgroups.
\end{example}

\begin{figure}
\centering
\includegraphics[scale=1.2]{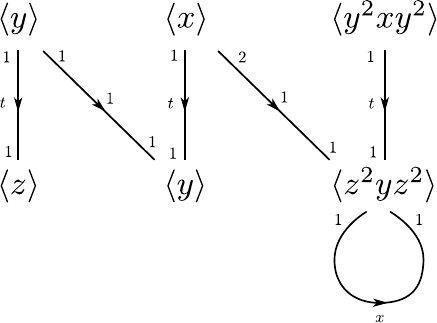}
\caption{The graph of cyclic stabilisers $\mathcal{G}$}
\label{gocg_no_BS}
\end{figure}

\begin{remark}
Algorithmic problems relating to this section are handled in detail in the author's thesis \cite{my_thesis}. Combining work of Howie \cite{howie_05}, Stallings \cite{sta_83} and Kharlampovich--Miasnikov--Weil \cite{kharlampovich_17}, there it is shown that given as input an inertial one-relator extension $H*_{\psi}$ with the assumptions of Theorem \ref{acylindrical_equivalent}, the $\Z$-stable number and the graph of cyclic stabilisers are computable. Thus, one can effectively decide whether $H*_{\psi}$ contains a Baumslag--Solitar subgroup or not. The problem of deciding whether an arbitrary one-relator group contains a Baumslag--Solitar subgroup is still open.
\end{remark}

\section{Main results and further questions}
\label{sec:main}

We are finally ready to prove our main results. A one-relator tower (respectively, hierarchy) $X_N\immerses ...\immerses X_1\immerses X_0$ is an \emph{acylindrical tower} (respectively, hierarchy) if $\pi_1(X_i)$ acts acylindrically on the Bass-Serre tree associated with the one-relator splitting $\pi_1(X_i) = \pi_1(X_{i+1})*_{\psi_i}$ for all $i$. It is a \emph{$\Z$-stable tower} (respectively, hierarchy) if $s\Z(\psi_i)<\infty$ for all $i$.

\begin{theorem}
\label{main}
Let $X$ be a one-relator complex and $X_N\immerses ...\immerses X_1\immerses X_0 = X$ a one-relator hierarchy. The following are equivalent:
\begin{enumerate}
\item\label{itm:hqch} $X_N\immerses ...\immerses X_1\immerses X_0 = X$ is a quasi-convex hierarchy and $\pi_1(X)$ is hyperbolic.
\item\label{itm:acylindrical_h} $X_N\immerses ...\immerses X_1\immerses X_0 = X$ is an acylindrical hierarchy.
\item\label{itm:bs} $X_N\immerses ...\immerses X_1\immerses X_0 = X$ is a $\Z$-stable hierarchy and $\pi_1(X)$ contains no Baumslag--Solitar subgroups.
\end{enumerate}
Moreover, if any of the above is satisfied, then $\pi_1(X)$ is virtually special and the image of $\pi_1(A)$ in $\pi_1(X)$ is quasi-convex for any connected subcomplex $A\subset X_i$.
\end{theorem}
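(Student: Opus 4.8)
The proof will be an induction on the hierarchy length $N$, establishing the cyclic chain of implications $(\ref{itm:hqch})\Rightarrow(\ref{itm:acylindrical_h})\Rightarrow(\ref{itm:bs})\Rightarrow(\ref{itm:hqch})$ together with the concluding assertions. When $N=0$ the group $\pi_1(X)$ is a free product of cyclic groups, hence virtually free: it is hyperbolic, virtually special, contains no Baumslag--Solitar subgroup, and every subgroup carried by a subcomplex is quasi-convex, so all three conditions hold trivially. For the inductive step one removes the first splitting $\pi_1(X)\isom\pi_1(X_1)*_{\psi_0}$ with associated Magnus subcomplexes $A_1,B_1\subset X_1$. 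The crucial structural point is that the tail $X_N\immerses\cdots\immerses X_1$ is itself a one-relator hierarchy to which each of the three conditions restricts: an acylindrical (respectively stable) hierarchy has acylindrical (respectively stable) tail, and since $\pi_1(X_1)$ embeds in $\pi_1(X)$ as the vertex group, absence of Baumslag--Solitar subgroups is inherited. Thus the inductive hypothesis applies to $X_1$.

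For $(\ref{itm:hqch})\Rightarrow(\ref{itm:acylindrical_h})$: working level by level, at each level the ambient group is hyperbolic and the edge subgroups are quasi-convex in it, so by the standard fact that a hyperbolic group acting on a tree with quasi-convex edge stabilisers acts acylindrically (and has quasi-convex vertex stabilisers), the splitting is acylindrical; quasi-convexity of the vertex stabiliser propagates hyperbolicity down one level and the argument cascades. For $(\ref{itm:acylindrical_h})\Rightarrow(\ref{itm:bs})$: if some $s(\psi_i)=\infty$ then Proposition \ref{biinfinite_stab} produces $g\in\pi_1(X_i)$ acting hyperbolically on the $i$-th Bass--Serre tree with $\rr(\pi_1(X_{i+1})\cap\pi_1(X_{i+1})^{g^n})\neq 0$ for all $n$; these intersections are non-cyclic, hence infinite, and stabilise geodesic segments of unbounded length, contradicting acylindricity, so the hierarchy is stable; and a Baumslag--Solitar subgroup of $\pi_1(X)$ would either act elliptically on the first tree, hence be conjugate into $\pi_1(X_1)$ and excluded by induction, or act non-elliptically, which is incompatible with the acylindrical action by the Minasyan--Osin trichotomy.

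For $(\ref{itm:bs})\Rightarrow(\ref{itm:hqch})$ and the moreover clause: by induction the hierarchy of $X_1$ satisfies $(\ref{itm:hqch})$, so $\pi_1(X_1)$ is hyperbolic and virtually special, the tail is a quasi-convex hierarchy, and $\pi_1(A_1),\pi_1(B_1)$ are quasi-convex in $\pi_1(X_1)$; by Lemma \ref{Magnus_free_factor} these edge subgroups are strongly inert in Magnus subgroups (after adjoining a finitely generated free factor, as in Remark \ref{splitting_remark}), so the first splitting is of the form (\ref{group}). Since it is stable and $\pi_1(X)$ has no Baumslag--Solitar subgroup, Theorem \ref{acylindrical_equivalent} shows $\pi_1(X)$ acts acylindrically on the first Bass--Serre tree. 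Viewing the splitting as a graph of spaces with vertex space $X_1$, Theorem \ref{kapo_main} then gives that $\tilde X^{(1)}$ is hyperbolic, i.e.\ $\pi_1(X)$ is hyperbolic, and that $\pi_1(X_1)$ is quasi-convex in $\pi_1(X)$; transporting the inductive quasi-convexity statements across this inclusion and applying Theorem \ref{one-relator_normal_form_inductive} with $Z=X_1$ shows $\pi_1(C)$ is quasi-convex in $\pi_1(X)$ for every subcomplex $C\subset X$. In particular the first-level Magnus subgroups are quasi-convex, so $X$'s hierarchy is quasi-convex; virtual specialness of $\pi_1(X)$ then follows from Wise's theorem that hyperbolic groups with quasi-convex hierarchies are virtually special \cite{wise_21_quasiconvex}.

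The main obstacle is exactly this inductive propagation of hyperbolicity of $\pi_1(X)$ and of quasi-convexity --- hence hyperbolicity --- of the vertex group $\pi_1(X_1)$ across a single HNN-level: quasi-convex edge groups alone do not force either, and the correct route is to first establish acylindricity of the HNN-extension on its Bass--Serre tree, after which Theorem \ref{kapo_main} supplies what is needed to continue. This is precisely where stability is indispensable --- Theorem \ref{acylindrical_equivalent} is the device that converts stability together with the absence of Baumslag--Solitar subgroups into acylindricity. Secondary care is needed in the free-factor bookkeeping of Lemma \ref{Magnus_free_factor} and Remark \ref{splitting_remark} (adjoining a free group changes neither hyperbolicity, nor virtual specialness, nor the presence of Baumslag--Solitar subgroups), and in checking that conditions $(\ref{itm:acylindrical_h})$ and $(\ref{itm:bs})$ really do pass to the sub-hierarchy of $X_1$ so that the induction is well-founded.
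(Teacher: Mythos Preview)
Your proof is correct and follows essentially the same inductive strategy as the paper, pivoting on Theorems \ref{kapo_main}, \ref{one-relator_normal_form_inductive}, \ref{acylindrical_equivalent} and Wise's virtual specialness theorem. There are two minor route differences worth noting. For $(\ref{itm:hqch})\Rightarrow(\ref{itm:acylindrical_h})$ the paper invokes Gersten's theorem \cite{gersten_96} once to obtain hyperbolicity of every $\pi_1(X_i)$ (these are finitely presented subgroups of a hyperbolic group of cohomological dimension at most two), and then applies Proposition \ref{kapo_converse} at each level; you instead cascade hyperbolicity downward using the fact that vertex stabilisers of a hyperbolic group acting on a tree with quasi-convex edge stabilisers are themselves quasi-convex --- this is true but is an external result not recorded in the paper, so the paper's route via Gersten is more self-contained. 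For $(\ref{itm:acylindrical_h})\Rightarrow(\ref{itm:bs})$ the paper simply notes that the already-established equivalence $(\ref{itm:acylindrical_h})\Leftrightarrow(\ref{itm:hqch})$ makes $\pi_1(X)$ hyperbolic, which excludes Baumslag--Solitar subgroups outright; your argument via the Minasyan--Osin trichotomy and induction is more direct but slightly heavier.
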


\begin{proof}
The proof is by induction. The base case is clear. By the inductive hypothesis, we may assume that $\pi_1(X_1)$ is hyperbolic, virtually special and that $\pi_1(A)<\pi_1(X_1)$ is quasi-convex for any connected full subcomplex $A\subset X_i$ with $i\geqslant 1$. The equivalence between (\ref{itm:hqch}) and (\ref{itm:acylindrical_h}) now follows from Theorem \ref{kapo_main}. The equivalence between (\ref{itm:acylindrical_h}) and (\ref{itm:bs}) is Theorem \ref{acylindrical_equivalent}.

Finally, assuming $\pi_1(X)$ is hyperbolic and the hierarchy is quasi-convex, $\pi_1(X)$ is virtually special by \cite[Theorem 13.3]{wise_21_quasiconvex} and the image of $\pi_1(A)$ in $\pi_1(X)$ is quasi-convex for any connected subcomplex $A\subset X_i$ by Theorem \ref{one-relator_normal_form_inductive} and induction.
\end{proof}

We now prove a stronger form of \cite[Conjecture 1.9]{louder_21}.

\begin{theorem}
\label{hyperbolicity_theorem}
Let $X$ be a one-relator complex with negative immersions. Then $\pi_1(X)$ is hyperbolic, virtually special and all of its one-relator hierarchies $X_N\immerses...\immerses X_1\immerses X_0 = X$ are quasi-convex hierarchies.
\end{theorem}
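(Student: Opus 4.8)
The plan is to derive Theorem~\ref{hyperbolicity_theorem} directly from Theorem~\ref{main} by verifying that condition~(\ref{itm:bs}) of that theorem holds for \emph{every} one-relator hierarchy of $X$. The first step is to observe that the property of having negative immersions is inherited by the complexes occurring in any one-relator hierarchy. Recall from \cite{louder_21} that a one-relator complex has negative immersions precisely when its fundamental group contains no non-free two-generator one-relator subgroup; since this is a property of subgroups, it passes to subgroups. Now each immersion $X_{i+1}\immerses X_i$ in a one-relator hierarchy is a tower map, i.e.\ the composite of the inclusion of a tree domain with a cyclic cover, and hence is $\pi_1$-injective: the inclusion of a tree domain induces the vertex-group inclusion of the HNN-splitting from Proposition~\ref{splitting}, and covering maps are $\pi_1$-injective. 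Therefore, by induction, if $X = X_0$ has negative immersions then so does every $X_i$; here one should first replace $X$ by its one-relator core $X_{\lambda}$ (equivalently, assume $X$ finite), as in the proof of Theorem~\ref{new_hierarchy}, so that the subgroup characterisation of negative immersions is available.

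The second step is then immediate: for each $i$, the splitting $\pi_1(X_i)\isom\pi_1(X_{i+1})*_{\psi_i}$ is a one-relator splitting and $X_i$ has negative immersions, so Corollary~\ref{stable_negative} gives $s(\psi_i)<\infty$; hence the hierarchy is stable. Moreover $\pi_1(X)$ contains no Baumslag--Solitar subgroup, since a Baumslag--Solitar group is a non-free two-generator one-relator group, so such a subgroup would contradict negative immersions of $X$ (as already noted in the introduction). Thus condition~(\ref{itm:bs}) of Theorem~\ref{main} is satisfied for the given hierarchy.

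The final step is to invoke Theorem~\ref{main}: since condition~(\ref{itm:bs}) implies condition~(\ref{itm:hqch}), the group $\pi_1(X)$ is hyperbolic and the given hierarchy is a quasi-convex hierarchy, and the ``moreover'' clause yields that $\pi_1(X)$ is virtually special (and, in fact, that $\pi_1(A)$ is quasi-convex in $\pi_1(X)$ for every subcomplex $A\subset X_i$). Since the argument applies to an arbitrary one-relator hierarchy of $X$, all of them are quasi-convex, completing the proof. Given how much has already been assembled, there is no serious obstacle remaining; the only point requiring genuine care is the first step --- confirming that negative immersions really does descend along the hierarchy --- which rests on the $\pi_1$-injectivity of tower maps together with the subgroup-theoretic characterisation of negative immersions applied to the finite complex $X_{\lambda}$.
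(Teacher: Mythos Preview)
Your proof is correct and follows essentially the same approach as the paper: verify condition~(\ref{itm:bs}) of Theorem~\ref{main} and invoke that theorem. The paper's own proof is a two-line appeal to Corollary~\ref{stable_negative}, the absence of Baumslag--Solitar subgroups, and Theorem~\ref{main}; you make explicit the step the paper leaves tacit, namely that each $X_i$ in the hierarchy again has negative immersions (via $\pi_1$-injectivity of tower maps and the subgroup characterisation from \cite{louder_21}), so that Corollary~\ref{stable_negative} applies at every level.
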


\begin{proof}
Since $\pi_1(X)$ does not contain any Baumslag--Solitar subgroups \cite[Corollary 1.8]{louder_21}, the result follows from Corollary \ref{stable_negative} and Theorem \ref{main}.
\end{proof}

\begin{corollary}
\label{isomorphism}
The isomorphism problem for one-relator groups with negative immersions is decidable within the class of one-relator groups.
\end{corollary}

\begin{proof}
Let $G$ be a one-relator group with negative immersions and let $H$ be any one-relator group. By \cite[Theorem 1.3 \& Lemma 6.4]{louder_21}, there is an algorithm to decide whether $H$ has negative immersions or not. If it does not, then it is not isomorphic to $G$. If it does, then both $G$ and $H$ are hyperbolic by Theorem \ref{hyperbolicity_theorem} and so isomorphism between the two can be decided by \cite{dahmani_08}.
\end{proof}

We may now solve a problem of Baumslag's \cite[Problem 4]{baumslag_85}.

\begin{corollary}
\label{parafree}
Parafree one-relator groups have negative immersions. In particular, they are hyperbolic, virtually special and their isomorphism problem is decidable.
\end{corollary}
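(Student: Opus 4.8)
The statement has a core part---that parafree one-relator groups have negative immersions---together with three consequences that follow quickly once the core is in place. Indeed, hyperbolicity and virtual specialness are then given by Theorem~\ref{hyperbolicity_theorem}, and solvability of the isomorphism problem follows by combining the decidability of the property ``has negative immersions'' for one-relator presentations (\cite{karrass_60,louder_21}) with the solvability of the isomorphism problem for torsion-free hyperbolic groups: given two one-relator presentations of parafree groups one first verifies negative immersions, which certifies that both groups are torsion-free and hyperbolic, and then runs the hyperbolic isomorphism algorithm. So the plan is to prove that every parafree one-relator group has negative immersions.

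First I would make two elementary reductions. A parafree group is torsion-free: a nontrivial torsion element has trivial image in every torsion-free nilpotent quotient, hence lies in $\bigcap_c\gamma_c$, contradicting residual nilpotence. Next, in any one-relator presentation $\langle\Sigma\mid w\rangle$ of a non-free parafree group $G$ the class $[w]$ is a primitive element of $H_1(F(\Sigma))\isom\Z^{\abs{\Sigma}}$: if $[w]=0$ then $G$ is free (the surjection $F(\Sigma)/\gamma_c\to G/\gamma_c$ composed with the parafree identification is a surjective endomorphism of a Hopfian group, hence an isomorphism, forcing $w\in\gamma_c(F(\Sigma))$ for all $c$), while if $[w]\neq 0$ is imprimitive then $H_1(G)$ has torsion, again contradicting parafreeness. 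In particular $w$ is not a proper power, so $\pi(w)\neq 1$. By \cite{louder_21} it then suffices to show that $G$ contains no non-free two-generator one-relator subgroup $H$; such an $H$, being a subgroup of the torsion-free, residually nilpotent group $G$, is itself torsion-free and residually nilpotent, and since it is non-free it has $H\isom F(x,y)/\normal{v}$ with $v$ neither a proper power nor primitive, i.e.\ $\pi(v)=2$.

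The heart of the argument is to exploit that $G$ is residually nilpotent \emph{with the lower central quotients of a free group}, so that the associated graded Lie ring of $G$ over $\Q$ is a free Lie algebra $L$. Since free Lie algebras have one-dimensional centralisers, a short commutator-calculus argument shows that if $g,h\in G$ commute then their leading terms $\bar g,\bar h\in L$ are proportional homogeneous elements of the same degree; subtracting off the error term and iterating forces a relation $g^qh^{-p}=1$, so $\langle g,h\rangle$ is cyclic. Hence $G$ contains no non-cyclic abelian subgroup, in particular no copy of $\Z^2$, which disposes of the case $H\isom\Z^2$ and, via \cite{louder_21,louder_21_uniform}, of every $H$ containing a $\Z^2$ (such as a Klein-bottle or torus-knot subgroup). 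For the remaining two-generator one-relator subgroups $H$ with $\pi(v)=2$ containing no $\Z^2$, I would push the same Lie-ring analysis to the relation $v(g_1,g_2)=1$ holding in $G$: the leading term of $v(g_1,g_2)$ in $L$ is the leading Lie term of $v$ evaluated at $\bar g_1,\bar g_2$, and the Shirshov--Witt theorem (subalgebras of free Lie algebras are free) should force $\bar g_1,\bar g_2$ to be proportional, which together with $v$ being neither primitive nor a proper power should collapse $\langle g_1,g_2\rangle$ to a subgroup too small to be a non-free one-relator group.

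The step I expect to be the main obstacle is precisely this last one: ruling out a residually nilpotent, torsion-free, non-free two-generator one-relator subgroup of a parafree group once the absence of a $\Z^2$-subgroup is known. Making the Lie-ring collapse argument watertight for a general primitivity-rank-two relator---or, alternatively, invoking the finer structure theory of primitivity-rank-two one-relator groups from \cite{louder_21,louder_21_uniform} to conclude that any such $H$ not containing $\Z^2$ is a Baumslag--Solitar group $\bs(m,n)$ with $\abs{m}\neq\abs{n}$ or a torus-knot group, neither of which is residually nilpotent---is where the real content lies; everything preceding it is either standard or an immediate consequence of the cited results. Once $\pi(w)\geq 3$ is established for every one-relator presentation of $G$, negative immersions follows from \cite{louder_21}, and the corollary then follows from Theorem~\ref{hyperbolicity_theorem}.
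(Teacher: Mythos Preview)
Your plan correctly identifies the logical structure: once negative immersions is established, hyperbolicity and virtual specialness follow from Theorem~\ref{hyperbolicity_theorem}, and the isomorphism problem from the solution for hyperbolic groups \cite{dahmani_10}. The difficulty is entirely in the core claim, and here your approach diverges sharply from the paper's.

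The paper's proof of negative immersions is a one-line citation: by a classical theorem of Baumslag \cite[Theorem~4.2]{baumslag_69_parafree}, every two-generator subgroup of a parafree group is free; combined with the characterisation of negative immersions from \cite{louder_21}, this gives the result immediately. You are, in effect, attempting to reprove Baumslag's theorem from scratch via the associated graded Lie algebra, and you explicitly acknowledge that the crucial step---showing that a non-free two-generator one-relator group with $\pi(v)=2$ cannot embed in a parafree group---is not complete. That gap is real. Your Lie-ring argument shows that commuting elements generate a cyclic subgroup (which is correct and is essentially how one proves parafree groups are CSA), but the extension to an arbitrary primitivity-rank-two relator does not go through as sketched: the ``leading Lie term of $v$ evaluated at $\bar g_1,\bar g_2$'' need not force proportionality of $\bar g_1,\bar g_2$, since one can easily have $\bar g_1,\bar g_2$ independent while the leading term of $v(g_1,g_2)$ vanishes by cancellation at higher depth.

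Your proposed alternative---that every two-generator one-relator group with $\pi(v)=2$ and no $\Z^2$ subgroup is a Baumslag--Solitar or torus-knot group---is false. Closed hyperbolic surface groups already give counterexamples (they have $\pi(v)=2$, contain no $\Z^2$, and are residually nilpotent), and there are many more. So neither branch of your argument closes, and the correct route is simply to invoke Baumslag's theorem.
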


\begin{proof}
By \cite[Theorem 4.2]{baumslag_69_parafree} and \cite[Theorems 1.3 \& 1.5]{louder_21}, parafree one-relator groups have negative immersions. Now the result follows from Theorems \ref{hyperbolicity_theorem} and \ref{isomorphism}.
\end{proof}

\begin{corollary}
\label{residual_finiteness}
One-relator groups with negative immersions are residually finite and linear.
\end{corollary}

\begin{corollary}
\label{hyperbolicity_corollary}
Let $X$ be a one-relator complex with negative immersions. Then every finitely generated subgroup of $\pi_1(X)$ is hyperbolic.
\end{corollary}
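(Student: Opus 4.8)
The plan is to prove the stronger assertion that $\pi_1(X)$ is \emph{locally quasi-convex}, i.e.\ that every finitely generated subgroup of $\pi_1(X)$ is quasi-convex. Since $\pi_1(X)$ is hyperbolic by Theorem \ref{hyperbolicity_theorem} and a quasi-convex subgroup of a hyperbolic group is itself hyperbolic, the corollary follows. We may assume $X$ is finite, and we argue by induction on the hierarchy length $h(X)$. A preliminary observation makes the induction available: if $X_N\immerses\dots\immerses X_0=X$ is a one-relator hierarchy, then each $X_i$ again has negative immersions. Indeed the maps $\pi_1(X_{i+1})\injects\pi_1(X_i)$ are injective, so $\pi_1(X_i)$ embeds into $\pi_1(X)$ and therefore contains no non-free two-generator one-relator subgroup; as $X_i$ is a one-relator complex by Proposition \ref{one-relator_domain}, the characterisation of \cite{louder_21} applies. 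Thus Theorems \ref{hyperbolicity_theorem} and \ref{main} are available for every $X_i$.

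In the base case $h(X)=0$, the group $\pi_1(X)$ is a free product of cyclic groups which, being torsion-free, is free; finitely generated subgroups of free groups are free and quasi-convex. For the inductive step, let $\pi_1(X)\isom\pi_1(X_1)*_{\psi}$ be the first splitting of a one-relator hierarchy. By Theorem \ref{hyperbolicity_theorem} this hierarchy is quasi-convex and $\pi_1(X)$ is hyperbolic, so by Theorem \ref{main} the action of $\pi_1(X)$ on the Bass--Serre tree $T$ of this splitting is acylindrical. By induction $\pi_1(X_1)$ is hyperbolic and locally quasi-convex; moreover $\pi_1(X_1)$ is quasi-convex in $\pi_1(X)$ by Theorems \ref{kapo_main} and \ref{one-relator_normal_form_inductive}, and the Magnus subgroups $\pi_1(A),\pi_1(B)<\pi_1(X_1)$ associated to the splitting are quasi-convex in $\pi_1(X_1)$, hence in $\pi_1(X)$, by Theorem \ref{main}.

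Now let $H<\pi_1(X)$ be finitely generated and examine its action on $T$. If $H$ fixes a vertex, then $H$ is conjugate into $\pi_1(X_1)$ and so is quasi-convex in $\pi_1(X_1)$ by induction, hence quasi-convex in $\pi_1(X)$. Otherwise $H$ has a unique minimal invariant subtree $T_H\subseteq T$ with $T_H/H$ finite (as $H$ is finitely generated), and by acylindrical accessibility, applied to the acylindrical action of $H$ on $T_H$, one obtains a finite graph-of-groups decomposition of $H$ with finitely generated vertex groups $H_v=H\cap\pi_1(X_1)^{g_v}$ and edge groups of the form $H_v\cap M^{g}$, where each $M$ is a conjugate of $\pi_1(A)$ or $\pi_1(B)$. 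Each $H_v$ is quasi-convex in $\pi_1(X_1)^{g_v}$ by induction, hence in $\pi_1(X)$; and each edge group, being an intersection of two quasi-convex subgroups of the hyperbolic group $\pi_1(X)$, is quasi-convex in $\pi_1(X)$, in particular finitely generated and free, and quasi-convex in $H_v$. The action of $H$ on $T_H$ is acylindrical, the vertex groups are hyperbolic and the edge inclusions are quasi-isometric embeddings, so $H$ is hyperbolic by the combination theorem in the form of Theorem \ref{kapo_main}; running the associated quasi-geodesic normal form construction (Theorem \ref{hyperbolic_normal_forms}) relative to the vertex pieces --- which are quasi-convex in $\pi_1(X)$ rather than merely hyperbolic --- exhibits $H$ as quasi-convex in $\pi_1(X)$. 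This completes the induction and hence the proof.

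The delicate point is the last step: promoting ``$H$ is hyperbolic'' to ``$H$ is quasi-convex in $\pi_1(X)$''. This requires a quasi-convexity combination theorem for a group assembled along a subtree of $T$ out of pieces that are quasi-convex in the ambient group, which is obtained by re-running the relative normal form argument behind Theorems \ref{hyperbolic_normal_forms} and \ref{one-relator_normal_form_inductive}, now with the core complex of $H$ in place of a subcomplex. A secondary technical input is acylindrical accessibility, ensuring the graph-of-groups decomposition of the finitely generated group $H$ is finite with finitely generated vertex groups; the remaining ingredients --- quasi-convex subgroups of hyperbolic groups are hyperbolic, quasi-convexity is transitive through quasi-convex overgroups and stable under conjugation, and finite intersections of quasi-convex subgroups are quasi-convex --- are standard.
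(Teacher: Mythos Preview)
Your attempt aims at a stronger conclusion than the corollary asks for---local quasi-convexity of $\pi_1(X)$---and there is a genuine gap in the inductive step. The crucial unjustified claim is that the induced splitting of a finitely generated $H<\pi_1(X)$ along the Bass--Serre tree has \emph{finitely generated} vertex groups. Acylindrical accessibility (Sela, Weidmann, Delzant) bounds the number of edges in a reduced decomposition arising from a $k$-acylindrical action, but it does \emph{not} produce finite generation of the vertex groups $H_v=H\cap\pi_1(X_1)^{g_v}$. In general a finitely generated subgroup of an HNN-extension can meet a vertex group in an infinitely generated subgroup, and nothing in the hypotheses rules this out here; indeed, obtaining finite generation of these intersections is essentially equivalent to the local quasi-convexity you are trying to prove, so the argument is circular at this point. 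The second issue you yourself flag---upgrading hyperbolicity of $H$ to quasi-convexity in $\pi_1(X)$---is also not covered by Theorems \ref{hyperbolic_normal_forms} or \ref{one-relator_normal_form_inductive} as stated, since those results are tailored to subcomplexes rather than arbitrary finitely generated subgroups.

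The paper's proof takes a completely different and much shorter route that avoids local quasi-convexity altogether. One-relator groups with negative immersions are torsion-free, so by Lyndon's identity theorem the presentation complex is aspherical and $\pi_1(X)$ has cohomological dimension at most $2$. By \cite[Theorem A]{louder_21_uniform} every finitely generated subgroup of $\pi_1(X)$ is finitely presented, hence of type $FP_2$. Gersten's theorem \cite[Corollary 7.8]{gersten_96} then says that any $FP_2$ subgroup of a hyperbolic group of cohomological dimension $\le 2$ is itself hyperbolic; combined with Theorem \ref{hyperbolicity_theorem} this gives the corollary directly. Note that this yields hyperbolicity of finitely generated subgroups without asserting quasi-convexity, which is consistent with the fact that local quasi-convexity for this class is a separate (and harder) question.
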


\begin{proof}
Follows from \cite[Corollary 7.8]{gersten_96} and \cite[Theorem A]{louder_21_uniform}.
\end{proof}

Extending any of these results to all one-relator groups would require a better understanding of one-relator hierarchies that are not $\Z$-stable. Therefore, we ask the following question.

\begin{problem}
Characterise non $\Z$-stable one-relator hierarchies.
\end{problem}

Any one-relator group satisfying the hypothesis of Brown's criterion (see \cite{brown_87}), either has a one-relator splitting that is not $\Z$-stable, or is isomorphic to a Baumslag--Solitar group $\bs(1, k)$. More generally, if $\pi_1(X)\isom \pi_1(X_1)*_{\psi}$ is a one-relator splitting and there is some $g\in \pi_1(X)$ acting hyperbolically on its Bass--Serre tree and some $A_n\in [A_n]\in\bar{\mathcal{A}}_n^{\psi}$ such that $A^g_n<A_n$, then $\langle A_n, g\rangle$ splits as an ascending HNN-extension of a finitely generated free group. We ask whether this is the only situation that can occur.

\begin{question}
Let $X$ be a one-relator complex and $\pi_1(X)\isom \pi_1(X_1)*_{\psi}$ a one-relator splitting that is not $\Z$-stable. Is there some $n\geq 0$ such that one of the following holds 
\begin{align*}
\bar{\mathcal{A}}_n^{\psi} &= \bar{\mathcal{A}}_{n+i}^{\psi}\\
\bar{\mathcal{A}}_n^{\psi^{-1}} &= \bar{\mathcal{A}}_{n+i}^{\psi^{-1}}
\end{align*}
for all $i\geq 0$? 

If so, does there exist an immersion of one-relator complexes $Z\immerses X$ that does not factor through $X_1\immerses X$ and such that $\pi_1(Z)$ has non-empty BNS invariant?
\end{question}

\bibliographystyle{amsalpha}
\bibliography{bibliography}

\newcommand{\etalchar}[1]{$^{#1}$}
\providecommand{\bysame}{\leavevmode\hbox to3em{\hrulefill}\thinspace}
\providecommand{\MR}{\relax\ifhmode\unskip\space\fi MR }
\providecommand{\MRhref}[2]{%
  \href{http://www.ams.org/mathscinet-getitem?mr=#1}{#2}
}
\providecommand{\href}[2]{#2}
\begin{thebibliography}{FRMW21}

\bibitem[ABC{\etalchar{+}}91]{alonso_91}
J.~M. Alonso, T.~Brady, D.~Cooper, V.~Ferlini, M.~Lustig, M.~Mihalik,
  M.~Shapiro, and H.~Short, \emph{Notes on word hyperbolic groups}, Group
  theory from a geometrical viewpoint ({T}rieste, 1990), World Sci. Publ.,
  River Edge, NJ, 1991, Edited by Short, pp.~3--63. \MR{1170363}

\bibitem[AG99]{allcock_99}
D.~J. Allcock and S.~M. Gersten, \emph{A homological characterization of
  hyperbolic groups}, Invent. Math. \textbf{135} (1999), no.~3, 723--742.
  \MR{1669272}

\bibitem[Bau67]{baumslag_67}
Gilbert Baumslag, \emph{Residually finite one-relator groups}, Bull. Amer.
  Math. Soc. \textbf{73} (1967), 618--620. \MR{212078}

\bibitem[Bau69]{baumslag_69_parafree}
\bysame, \emph{Groups with the same lower central sequence as a relatively free
  group. {II}. {P}roperties}, Trans. Amer. Math. Soc. \textbf{142} (1969),
  507--538. \MR{245653}

\bibitem[Bau86]{baumslag_85}
\bysame, \emph{A survey of groups with a single defining relation}, Proceedings
  of groups---{S}t. {A}ndrews 1985, London Math. Soc. Lecture Note Ser., vol.
  121, Cambridge Univ. Press, Cambridge, 1986, pp.~30--58. \MR{896500}

\bibitem[BC06]{baumslag_06}
Gilbert Baumslag and Sean Cleary, \emph{Parafree one-relator groups}, J. Group
  Theory \textbf{9} (2006), no.~2, 191--201. \MR{2220574}

\bibitem[BCH04]{baumslag_04}
Gilbert Baumslag, Sean Cleary, and George Havas, \emph{Experimenting with
  infinite groups. {I}}, Experiment. Math. \textbf{13} (2004), no.~4, 495--502.
  \MR{2118274}

\bibitem[Bes04]{bestvina_04}
Mladen Bestvina, \emph{Questions in geometric group theory}, 2004,
  https://www.math.utah.edu/~bestvina/eprints/questions-updated.pdf.

\bibitem[BF92]{bestvina_92_combination}
Mladen Bestvina and Mark Feighn, \emph{A combination theorem for negatively
  curved groups}, J. Differential Geom. \textbf{35} (1992), no.~1, 85--101.
  \MR{1152226}

\bibitem[BFR19]{baumslag_19}
Gilbert Baumslag, Benjamin Fine, and Gerhard Rosenberger, \emph{One-relator
  groups: an overview}, Groups {S}t {A}ndrews 2017 in {B}irmingham, London
  Math. Soc. Lecture Note Ser., vol. 455, Cambridge Univ. Press, Cambridge,
  2019, pp.~119--157. \MR{3931411}

\bibitem[BM22]{blufstein_19}
Mart\'{\i}n~Axel Blufstein and El\'{\i}as Minian, \emph{Strictly systolic
  angled complexes and hyperbolicity of one-relator groups}, Algebr. Geom.
  Topol. \textbf{22} (2022), no.~3, 1159--1175. \MR{4474781}

\bibitem[Bra99]{brady_99}
Noel Brady, \emph{Branched coverings of cubical complexes and subgroups of
  hyperbolic groups}, J. London Math. Soc. (2) \textbf{60} (1999), no.~2,
  461--480. \MR{1724853}

\bibitem[Bro87]{brown_87}
Kenneth~S. Brown, \emph{Trees, valuations, and the {B}ieri-{N}eumann-{S}trebel
  invariant}, Invent. Math. \textbf{90} (1987), no.~3, 479--504. \MR{914847}

\bibitem[BW13]{BW13}
Hadi Bigdely and Daniel~T. Wise, \emph{Quasiconvexity and relatively hyperbolic
  groups that split}, Michigan Math. J. \textbf{62} (2013), no.~2, 387--406.
  \MR{3079269}

\bibitem[CH21]{cashen_21}
Christopher~H. Cashen and Charlotte Hoffmann, \emph{{Short, Highly Imprimitive
  Words Yield Hyperbolic One-Relator Groups}}, Experimental Mathematics
  \textbf{0} (2021), no.~0, 1--10.

\bibitem[Che21]{chen_21}
Haimiao Chen, \emph{Solving the isomorphism problems for two families of
  parafree groups}, J. Algebra \textbf{585} (2021), 616--636. \MR{4282650}

\bibitem[CM82]{chandler_82}
Bruce Chandler and Wilhelm Magnus, \emph{The history of combinatorial group
  theory}, Studies in the History of Mathematics and Physical Sciences, vol.~9,
  Springer-Verlag, New York, 1982, A case study in the history of ideas.
  \MR{680777}

\bibitem[Col04]{collins_04}
Donald~J. Collins, \emph{Intersections of {M}agnus subgroups of one-relator
  groups}, Groups: topological, combinatorial and arithmetic aspects, London
  Math. Soc. Lecture Note Ser., vol. 311, Cambridge Univ. Press, Cambridge,
  2004, pp.~255--296. \MR{2073350}

\bibitem[Col08]{collins_08}
\bysame, \emph{Intersections of conjugates of {M}agnus subgroups of one-relator
  groups}, The {Z}ieschang {G}edenkschrift, Geom. Topol. Monogr., vol.~14,
  Geom. Topol. Publ., Coventry, 2008, pp.~135--171. \MR{2484702}

\bibitem[Dah03]{Da03}
Fran\c{c}ois Dahmani, \emph{Combination of convergence groups}, Geom. Topol.
  \textbf{7} (2003), 933--963. \MR{2026551}

\bibitem[DG08]{dahmani_08}
Fran\c{c}ois Dahmani and Daniel Groves, \emph{The isomorphism problem for toral
  relatively hyperbolic groups}, Publ. Math. Inst. Hautes \'{E}tudes Sci.
  (2008), no.~107, 211--290. \MR{2434694}

\bibitem[DV96]{dicks_96}
Warren Dicks and Enric Ventura, \emph{The group fixed by a family of injective
  endomorphisms of a free group}, Contemporary Mathematics, vol. 195, American
  Mathematical Society, Providence, RI, 1996. \MR{1385923}

\bibitem[FRMW21]{fine_21}
Benjamin Fine, Gerhard Rosenberger, Anja Moldenhauer, and Leonard Wienke,
  \emph{Topics in infinite group theory---{N}ielsen methods, covering spaces,
  and hyperbolic groups}, De Gruyter STEM, De Gruyter, Berlin, [2021]
  \copyright 2021. \MR{4368862}

\bibitem[FRS97]{fine_97}
Benjamin Fine, Gerhard Rosenberger, and Michael Stille, \emph{The isomorphism
  problem for a class of para-free groups}, Proc. Edinburgh Math. Soc. (2)
  \textbf{40} (1997), no.~3, 541--549. \MR{1475915}

\bibitem[Ger92]{gersten_92}
Stephen~M. Gersten, \emph{Problems on automatic groups}, Algorithms and
  classification in combinatorial group theory ({B}erkeley, {CA}, 1989), Math.
  Sci. Res. Inst. Publ., vol.~23, Springer, New York, 1992, pp.~225--232.
  \MR{1230636}

\bibitem[Ger96]{gersten_96}
\bysame, \emph{Subgroups of word hyperbolic groups in dimension {$2$}}, J.
  London Math. Soc. (2) \textbf{54} (1996), no.~2, 261--283. \MR{1405055}

\bibitem[GKL21]{gardam_21}
Giles Gardam, Dawid Kielak, and Alan~D. Logan, \emph{Algebraically hyperbolic
  groups}, 2021, arXiv:2112.01331.

\bibitem[GMRS98]{gitik_98}
Rita Gitik, Mahan Mitra, Eliyahu Rips, and Michah Sageev, \emph{Widths of
  subgroups}, Transactions of the American Mathematical Society \textbf{350}
  (1998), no.~1, 321--329.

\bibitem[Gro87]{gromov_87}
Mikhael Gromov, \emph{Hyperbolic groups}, Essays in group theory, Math. Sci.
  Res. Inst. Publ., vol.~8, Springer, New York, 1987, pp.~75--263. \MR{919829}

\bibitem[Gro93]{gromov_93}
M.~Gromov, \emph{Asymptotic invariants of infinite groups}, Geometric group
  theory, {V}ol. 2 ({S}ussex, 1991), London Math. Soc. Lecture Note Ser., vol.
  182, Cambridge Univ. Press, Cambridge, 1993, pp.~1--295. \MR{1253544}

\bibitem[GW19]{gardam_18}
Giles Gardam and Daniel~J. Woodhouse, \emph{The geometry of one-relator groups
  satisfying a polynomial isoperimetric inequality}, Proc. Amer. Math. Soc.
  \textbf{147} (2019), no.~1, 125--129. \MR{3876736}

\bibitem[Hat02]{hatcher_00}
Allen Hatcher, \emph{Algebraic topology}, Cambridge University Press,
  Cambridge, 2002. \MR{1867354}

\bibitem[HK17]{hung_17}
Do~Viet Hung and Vu~The Khoi, \emph{Applications of the {A}lexander ideals to
  the isomorphism problem for families of groups}, Proc. Edinb. Math. Soc. (2)
  \textbf{60} (2017), no.~1, 177--185. \MR{3589847}

\bibitem[HK20]{hung_20}
\bysame, \emph{Twisted {A}lexander ideals and the isomorphism problem for a
  family of parafree groups}, Proc. Edinb. Math. Soc. (2) \textbf{63} (2020),
  no.~3, 780--806. \MR{4163871}

\bibitem[How81]{howie_81}
James Howie, \emph{On pairs of {$2$}-complexes and systems of equations over
  groups}, J. Reine Angew. Math. \textbf{324} (1981), 165--174. \MR{614523}

\bibitem[How87]{howie_87}
\bysame, \emph{How to generalize one-relator group theory}, Combinatorial group
  theory and topology ({A}lta, {U}tah, 1984), Ann. of Math. Stud., vol. 111,
  Princeton Univ. Press, Princeton, NJ, 1987, pp.~53--78. \MR{895609}

\bibitem[How05]{howie_05}
\bysame, \emph{Magnus intersections in one-relator products}, Michigan Math. J.
  \textbf{53} (2005), no.~3, 597--623. \MR{2207211}

\bibitem[HW01]{hruska_01}
Geoffrey~Christopher Hruska and Daniel~T. Wise, \emph{Towers, ladders and the
  {B}. {B}. {N}ewman spelling theorem}, J. Aust. Math. Soc. \textbf{71} (2001),
  no.~1, 53--69. \MR{1840493}

\bibitem[HW16]{helfer_16}
Joseph Helfer and Daniel~T. Wise, \emph{Counting cycles in labeled graphs: the
  nonpositive immersion property for one-relator groups}, Int. Math. Res. Not.
  IMRN (2016), no.~9, 2813--2827. \MR{3519130}

\bibitem[IMM23]{italiano_21}
Giovanni Italiano, Bruno Martelli, and Matteo Migliorini, \emph{Hyperbolic
  5-manifolds that fiber over {$S^1$}}, Invent. Math. \textbf{231} (2023),
  no.~1, 1--38. \MR{4526820}

\bibitem[IS98]{ivanov_98}
Sergei~V. Ivanov and Paul~E. Schupp, \emph{On the hyperbolicity of small
  cancellation groups and one-relator groups}, Trans. Amer. Math. Soc.
  \textbf{350} (1998), no.~5, 1851--1894. \MR{1401522}

\bibitem[Iva18]{ivanov_18}
Sergei~V. Ivanov, \emph{The intersection of subgroups in free groups and linear
  programming}, Math. Ann. \textbf{370} (2018), no.~3-4, 1909--1940.
  \MR{3770185}

\bibitem[Kap00]{kapovich_00}
Ilya Kapovich, \emph{Mapping tori of endomorphisms of free groups}, Comm.
  Algebra \textbf{28} (2000), no.~6, 2895--2917. \MR{1757436}

\bibitem[Kap01]{kapovich_01}
\bysame, \emph{The combination theorem and quasiconvexity}, Internat. J.
  Algebra Comput. \textbf{11} (2001), no.~2, 185--216. \MR{1829050}

\bibitem[KMS60]{karrass_60}
Abraham Karrass, Wilhelm Magnus, and Donald Solitar, \emph{Elements of finite
  order in groups with a single defining relation}, Comm. Pure Appl. Math.
  \textbf{13} (1960), 57--66. \MR{124384}

\bibitem[KMW17]{kharlampovich_17}
Olga Kharlampovich, Alexei Miasnikov, and Pascal Weil, \emph{Stallings graphs
  for quasi-convex subgroups}, J. Algebra \textbf{488} (2017), 442--483.
  \MR{3680926}

\bibitem[Lin22]{my_thesis}
Marco Linton, \emph{One-relator hierarchies}, Ph.D. thesis, University of
  Warwick, UK, 2022.

\bibitem[LL94]{lewis_94}
Robert~H. Lewis and Sal Liriano, \emph{Isomorphism classes and derived series
  of certain almost-free groups}, Experiment. Math. \textbf{3} (1994), no.~3,
  255--258. \MR{1329373}

\bibitem[LS01]{lyn_00}
Roger~C. Lyndon and Paul~E. Schupp, \emph{Combinatorial group theory}, Classics
  in Mathematics, Springer-Verlag, Berlin, 2001, Reprint of the 1977 edition.
  \MR{1812024}

\bibitem[LW17]{louder_17}
Larsen Louder and Henry Wilton, \emph{Stackings and the {$W$}-cycles
  conjecture}, Canad. Math. Bull. \textbf{60} (2017), no.~3, 604--612.
  \MR{3679733}

\bibitem[LW22]{louder_21}
\bysame, \emph{Negative immersions for one-relator groups}, Duke Math. J.
  \textbf{171} (2022), no.~3, 547--594. \MR{4382976}

\bibitem[LW24]{louder_21_uniform}
\bysame, \emph{Uniform negative immersions and the coherence of one-relator
  groups}, Invent. Math. \textbf{236} (2024), no.~2, 673--712. \MR{4728240}

\bibitem[Lyn50]{lyndon_50}
Roger~C. Lyndon, \emph{Cohomology theory of groups with a single defining
  relation}, Ann. of Math. (2) \textbf{52} (1950), 650--665. \MR{47046}

\bibitem[Mag30]{magnus_30}
Wilhelm Magnus, \emph{\"{U}ber diskontinuierliche {G}ruppen mit einer
  definierenden {R}elation. ({D}er {F}reiheitssatz)}, J. Reine Angew. Math.
  \textbf{163} (1930), 141--165. \MR{1581238}

\bibitem[Mas06]{masters_06}
Joseph~D. Masters, \emph{Heegaard splittings and 1-relator groups}, unpublished
  paper, 2006.

\bibitem[MKS66]{magnus_04}
Wilhelm Magnus, Abraham Karrass, and Donald Solitar, \emph{Combinatorial group
  theory: {P}resentations of groups in terms of generators and relations},
  Interscience Publishers [John Wiley \& Sons], New York-London-Sydney, 1966.
  \MR{0207802}

\bibitem[MO15]{minasyan_15}
Ashot Minasyan and Denis Osin, \emph{Acylindrical hyperbolicity of groups
  acting on trees}, Math. Ann. \textbf{362} (2015), no.~3-4, 1055--1105.
  \MR{3368093}

\bibitem[Mol67]{moldavanskii_67}
D.~I. Moldavanski\u{\i}, \emph{Certain subgroups of groups with one defining
  relation}, Sibirsk. Mat. \v{Z}. \textbf{8} (1967), 1370--1384. \MR{0220810}

\bibitem[Mut21a]{mutanguha_21_stable}
Jean~Pierre Mutanguha, \emph{Constructing stable images}, preprint on webpage
  at mutanguha.com/pdfs/relimmalgo.pdf, 2021.

\bibitem[Mut21b]{mutanguha_21}
Jean~Pierre Mutanguha, \emph{The dynamics and geometry of free group
  endomorphisms}, Adv. Math. \textbf{384} (2021), Paper No. 107714, 60.
  \MR{4237417}

\bibitem[MUW11]{myasnikov_11}
Alexei Myasnikov, Alexander Ushakov, and Dong~Wook Won, \emph{The word problem
  in the {B}aumslag group with a non-elementary {D}ehn function is polynomial
  time decidable}, J. Algebra \textbf{345} (2011), 324--342. \MR{2842068}

\bibitem[New68]{newman_68}
B.~B. Newman, \emph{Some results on one-relator groups}, Bull. Amer. Math. Soc.
  \textbf{74} (1968), 568--571. \MR{222152}

\bibitem[Ros13]{rosenmann_13}
Amnon Rosenmann, \emph{On the intersection of subgroups in free groups: echelon
  subgroups are inert}, Groups Complex. Cryptol. \textbf{5} (2013), no.~2,
  211--221. \MR{3245107}

\bibitem[Sel97]{sela_97}
Z.~Sela, \emph{Acylindrical accessibility for groups}, Invent. Math.
  \textbf{129} (1997), no.~3, 527--565. \MR{1465334}

\bibitem[Ser03]{serre_80}
Jean-Pierre Serre, \emph{Trees}, Springer Monographs in Mathematics,
  Springer-Verlag, Berlin, 2003, Translated from the French original by John
  Stillwell, Corrected 2nd printing of the 1980 English translation.
  \MR{1954121}

\bibitem[Sta83]{sta_83}
John~R. Stallings, \emph{Topology of finite graphs}, Invent. Math. \textbf{71}
  (1983), no.~3, 551--565. \MR{695906}

\bibitem[Tar92]{tardos_92}
G\'{a}bor Tardos, \emph{On the intersection of subgroups of a free group},
  Invent. Math. \textbf{108} (1992), no.~1, 29--36. \MR{1156384}

\bibitem[Wis03]{wise_03}
Daniel~T. Wise, \emph{Nonpositive immersions, sectional curvature, and subgroup
  properties}, Electron. Res. Announc. Amer. Math. Soc. \textbf{9} (2003),
  1--9. \MR{1988866}

\bibitem[Wis04]{wise_04}
D.~T. Wise, \emph{Sectional curvature, compact cores, and local
  quasiconvexity}, Geom. Funct. Anal. \textbf{14} (2004), no.~2, 433--468.
  \MR{2062762}

\bibitem[Wis05]{wise_05_cat}
Daniel~T. Wise, \emph{Approximating flats by periodic flats in {CAT}(0) square
  complexes}, Canad. J. Math. \textbf{57} (2005), no.~2, 416--448. \MR{2124924}

\bibitem[Wis14]{wise_14}
\bysame, \emph{Cubular tubular groups}, Trans. Amer. Math. Soc. \textbf{366}
  (2014), no.~10, 5503--5521. \MR{3240932}

\bibitem[Wis20]{wise_21}
\bysame, \emph{An invitation to coherent groups}, What's next?---the
  mathematical legacy of {W}illiam {P}. {T}hurston, Ann. of Math. Stud., vol.
  205, Princeton Univ. Press, Princeton, NJ, 2020, pp.~326--414. \MR{4205645}

\bibitem[Wis21]{wise_21_quasiconvex}
\bysame, \emph{The structure of groups with a quasiconvex hierarchy}, Annals of
  Mathematics Studies, vol. 209, Princeton University Press, Princeton, NJ,
  [2021] \copyright 2021. \MR{4298722}

\end{thebibliography}

\end{document}